\numberwithin{equation}{section}
\newtheorem{thm}{Theorem}[section]
\newtheorem{cor}[thm]{Corollary}
\newtheorem{prop}[thm]{Proposition}
\newtheorem{lem}[thm]{Lemma}
\newcommand{\R}{\mathbb{R}}
\newcommand{\Z}{\mathbb{Z}}
\newcommand{\al}{\alpha}
\newcommand{\xs}{\overline{x}}
\renewcommand{\theta}{\vartheta}
\renewcommand{\epsilon}{\varepsilon}
\newcommand{\ep}{\epsilon}
\newcommand{\I}{\mathcal{I}_s}
\newcommand{\cs}{\overline{c}}
\newcommand{\vs}{\overline{v}}
\newcommand{\xss}{\hat{x}}
\newcommand{\css}{\hat{c}}
\newcommand{\dss}{\hat{\delta}}
\newcommand{\sigss}{\hat{\sigma}}
\newcommand{\thh}{\hat{\theta}}
\newcommand{\kka}{\overline{k}(a)}
\renewcommand{\leq}{\leqslant}
\renewcommand{\le}{\leqslant}
\renewcommand{\geq}{\geqslant}
\renewcommand{\ge}{\geqslant}
\newcommand{\beq}{\begin{equation}}
\newcommand{\eeq}{\end{equation}}
\newcommand{\beqs}{\begin{equation*}}
\newcommand{\eeqs}{\end{equation*}}
\newcommand{\beqa}{\begin{eqnarray}}
\newcommand{\eeqa}{\end{eqnarray}}
\newcommand{\beqas}{\begin{eqnarray*}}
\newcommand{\eeqas}{\end{eqnarray*}}
\title[Relaxation times]{Relaxation times for atom dislocations in crystals}
\author{Stefania Patrizi and Enrico Valdinoci}
\thanks{The authors have been supported by the
ERC grant 277749 ``EPSILON Elliptic
Pde's and Symmetry of Interfaces and Layers for Odd Nonlinearities''}
\address[Stefania Patrizi and Enrico Valdinoci]{
Weierstra{\ss} Institut f{\"u}r Angewandte und Stochastik,
Mohrenstra{\ss}e 39, D-10117 Berlin (Germany)}
\address[Stefania Patrizi]{
Department Of Mathematics,
University of Texas at Austin,
2515 Speedway, Austin TX 78712 (United States)}
\address[Enrico Valdinoci]{
School of Mathematics and Statistics,
University of Melbourne,
813 Swanston Street, Parkville VIC 3010 (Australia)}
\address[Enrico Valdinoci]{
Dipartimento di Matematica Federigo Enriques,
Universit\`a degli Studi di Milano,
Via Saldini 50, I-20133 Milano (Italy)}
\address[Enrico Valdinoci]{
Istituto di Matematica Applicata e Tecnologie Informatiche
Enrico Magenes,
Consiglio Nazionale delle Ricerche   
Via Ferrata 1, I-27100 Pavia (Italy)}
\email{Stefania.Patrizi@wias-berlin.de} 
\email{Enrico.Valdinoci@wias-berlin.de}
\subjclass[2010]{82D25, 35R09, 74E15, 35R11, 47G20.}
\keywords{Peierls-Nabarro model, nonlocal integro-differential equations,
dislocation dynamics, attractive/repulsive potentials, collisions.}
\begin{document}

\begin{abstract}
We study the relaxation times
for a parabolic differential equation
whose solution represents the atom dislocation in a crystal.
The equation that we consider comprises the classical
Peierls-Nabarro model as a particular case,
and it allows also long range interactions.

It is known that
the dislocation function of such a model
has the tendency to concentrate at single points,
which evolve in time according to
the external stress and a singular, long range potential.

Depending on the orientation of the dislocation function
at these points, the potential may be either attractive or
repulsive, hence collisions may occur in the latter case
and, at the collision time, the dislocation function does not
disappear.

The goal of this paper is to provide accurate
estimates on the relaxation times of the system after collision.
More precisely,
we take into account the case of two and three colliding points,
and we show that, after a small transition time subsequent to the
collision, the dislocation function relaxes exponentially fast
to a steady state.

In this sense, the system exhibits two different decay behaviors, namely an exponential time decay versus a polynomial decay in the space variables (and these two homogeneities are kept separate during the time evolution).

\end{abstract}

\maketitle
\section{Introduction}

In this paper we consider a function~$v_\ep(t,x)$, which depends
on the time variable~$t\geq0$ and the space variable~$x\in\R$,
and which represents the atom dislocation in a crystal
(in this setting, the small parameter~$\ep>0$ represents
the size of the periodicity of the crystal).

The evolution of~$v_\ep(t,x)$ is governed by
a parabolic equation of nonlocal type,
in which the variation of~$v_\ep$ in time
is produced by an elastic, or ferromagnetic,
effect and is influenced by the periodic structure
of the crystal at a large scale. These types
of equations have been widely studied
after the pioneer work of
Peierls and Nabarro (see e.g.~\cite{Nab97, gonzalezmonneau}
and the references therein).
Moreover, some generalizations of the original model
of Peierls and Nabarro have been recently considered to take
into account long range interactions with different scales
(see~\cite{dpv, dfv}) and the system can also be linked
to the classical model at the atomic scale which was introduced by 
Frenkel and Kontorova (see~\cite{fino}).
Different space/time scale of the model also produce
homogenization results, whose effective Hamiltonian
depends on the scaling properties of the operator
(in particular, this Hamiltonian may present either
local or nonlocal features, see~\cite{mp, pv}). We also refer to \cite{j,o} for some parabolic equations with classical diffusion and multiple-well potentials. 
\medskip

For small~$\ep$, the dislocation function~$v_\ep$
approaches a piecewise constant function
(see~\cite{gonzalezmonneau, dpv, dfv, pv2}).
The plateaus of this asymptotic limit correspond to
the periodic sites induced by the crystalline structure,
but its jump points evolve in time, according to the external
stress and a singular potential. Roughly speaking, one can
imagine that the discontinuity points of this
limit dislocation function behave like a ``particle'' system
(though no ``material'' particle is really involved), 
driven by a system of ordinary differential equations
which describe the position of the jump points~$x_1(t),\dots,x_N(t)$.
\medskip

We refer to Section~2 in~\cite{dpv}
for a discussion of the link between the integro-differential equation
which governs the evolution of the dislocation function~$v_\ep$
and the system of ODE's which drives the particles~$x_1,\dots,x_N$.   See in particular  Subsection 2.2 of  \cite{pv2} for a detailed  heuristic discussion.
Remarkably, the physical properties of the singular potential
of this ODE system depend
on the orientation of the dislocation
at the jump points. Namely, if the dislocation function
has the same spatial monotonicity at~$x_i$ and~$x_{i+1}$,
then the potential induces a repulsion between
the particles~$x_i$ and~$x_{i+1}$.
Conversely, when the dislocation function
has opposite spatial monotonicity at~$x_i$ and~$x_{i+1}$,
then the potential becomes attractive, and the two particles may
collide in a finite time~$T_c$. In formulas, in the collision case we have that~$
x_i(t)\ne x_{i+1}(t)$ for any~$t\in[0,T_c)$, with
\beq\label{8989}
\lim_{t\to T_c^-} x_i(T_c)= 
\lim_{t\to T_c^-} x_{i+1}(T_c)=:x_c.\eeq
Often, we will use the notation~$x_i(T_c)=x_{i+1}(T_c)$
to denote the collision described by~\eqref{8989}.
\medskip

At the collision time~$T_c$, the dislocation function
does not get annihilated.
More precisely, it asymptotically vanishes outside the
collision point~$x_c$, but, in general,
$$ \limsup_{{t\to T_c^-}\atop{\ep\to 0^+}}v_\ep(t,x_c)\ge1.$$
Roughly speaking, this suggests that the dislocation
function keeps some nontrivial effect after the collision time
(notice indeed that, since the jump points~$x_i$
do not correspond to a ``material'' particle, the evolution of
the dislocation function~$v_\ep$ persists even after
the collision time~$T_c$).\medskip

The objective of this paper is therefore to study the behavior
of the dislocation function after the collision time~$T_c$.
We will prove that there exists a transition time~$T_\ep$
(with~$T_\ep> T_c$, and~$T_\ep\to T_c$ as~$\ep\to0^+$)
such that, when~$t\ge T_\ep$, the dislocation function
decays to the steady state exponentially fast in time, uniformly with respect to the space
variable.\medskip

More precisely, we will consider here the case of two and three
particles and show that the limit configuration of~$v_\ep$
is either a constant (in the case of two particles) or a heteroclinic
(in the case of three particles). We show that at the time~$T_\ep$
the dislocation function~$v_\ep$ gets close to this
limit configuration, and, for~$t\ge T_\ep$,
the dislocation approaches the limit exponentially fast.\medskip

This exponential decay may be explicitly quantified via the
expression
\begin{equation}\label{EX01}
e^{c\frac{T_\ep-t}{\ep^{2s+1}}},
\end{equation}
where~$c$ is a positive constant and~$2s\in(0,2)$
is the order of the integro-differential operator in the evolution equation (the case $s=1/2$ has indeed special physical interest, see e.g. ~\cite{Nab97, gonzalezmonneau}).
It is worth to point out that the decay in~\eqref{EX01}
improves as~$\ep\to0^+$. \medskip

We also stress that such exponential decay is not obvious
from the beginning. On the contrary, solutions of
integro-differential equations in general present a polynomial
(and not an exponential) tail, see e.g.~\cite{psv}, and
also in our case the transitions considered have only a polynomial
decay in the space variables. In a sense, the exponential decay
in~\eqref{EX01} is a consequence of the fact that, at the right
space/time scale, the integro-differential operator
acts only in the space coordinates, allowing the time derivative
(which is a local operator) to recover the exponential decay
of classical flavor.

We stress that the results of the present paper are new even in the case $s=1/2$.

\medskip

For the formal mathematical treatment of this model, we
introduce the following notation.
We consider the problem 
\beq\label{vepeq}\begin{cases}
(v_\ep)_t=\displaystyle\frac{1}{\ep}\left(
\I v_\ep-\displaystyle\frac{1}{\ep^{2s}}W'(v_\ep)+\sigma(t,x)\right)&\text{in }(0,+\infty)\times\R\\
v_\ep(0,\cdot)=v_\ep^0&\text{on }\R
\end{cases}\eeq
where $\ep>0$ is a small scale parameter, $W$ is a periodic potential and  $\I$ is the so-called fractional Laplacian 
 of any order $2s\in(0,2)$.  Precisely, given $\varphi\in C^2(\R^N)\cap L^\infty(\R^N)$, let us define 
\beq\label{slapla} \I[\varphi](x):=PV\displaystyle\int_{\R^N}\displaystyle\frac{\varphi(x+y)-\varphi(x)}{|y|^{N+2s}}dy,\eeq
where $PV$ stands for the principal value of the integral. We refer to \cite{s} and   \cite{dnpv} for a basic introduction to the fractional Laplace operator. 
On the potential $W$ we assume 
\begin{equation}\label{Wass}
\begin{cases}W\in C^{3,\alpha}(\R)& \text{for some }0<\alpha<1\\
W(v+1)=W(v)& \text{for any } v\in\R\\
W=0& \text{on }\Z\\
W>0 & \text{on }\R\setminus\Z\\
W''(0)>0.\\
\end{cases}
\end{equation}
The function $\sigma$ satisfies: 

\begin{equation}\label{sigmaassump}
\begin{cases}
\sigma \in BUC([0,+\infty)\times\R)\quad\text{and for some }M>0\text{ and }\alpha\in(s,1)\\
\|\sigma_x\|_{L^\infty([0,+\infty)\times\R)}+\|\sigma_t\|_{L^\infty([0,+\infty)\times\R)}\leq M\\
|\sigma_x(t,x+h)-\sigma_x(t,x)|\leq M|h|^\alpha,\quad\text{for every }x,h\in\R \text{ and }t\in[0,+\infty).
\end{cases}
\end{equation}
{F}rom the viewpoint of physics, $W$ represents the potential
produced by the periodicity of the crystal at a large scale
and~$\sigma$ is an external forcing term (see~\cite{dpv}
for a more detailed discussion).

In this paper
we consider the case in which
the initial condition in \eqref{vepeq} is
a superposition of either two or three
transition layers with different orientation. Precisely, let us introduce the so-called basic layer solution $u$ associated to $\I$, that is the solution of
\begin{equation}\label{u}
\begin{cases}\I(u)=W'(u)&\text{in}\quad \R\\
u'>0&\text{in}\quad \R\\
\displaystyle\lim_{x\rightarrow-\infty}u(x)=0,\quad\displaystyle\lim_{x\rightarrow+\infty}u(x)=1,\quad u(0)=\displaystyle\frac{1}{2}.
\end{cases}
\end{equation}
This is the basic transition layer that we will use to
construct our initial data.
Namely, we will consider in this paper two types of initial data.
The first case deals with the superposition of
two transition layers with opposite orientations: in this case
the points associated with the transitions attract each other,
a collision occurs and slightly after collision the system
goes to rest exponentially fast.
The second situation considers three transition layers
with alternate orientations: in this case,
the middle point is attractive for the two external ones,
a collision (possibly, a multiple collision) occurs
and after a short transient time the system
approaches exponentially fast the steady state given
by a single transition layer.

These results will be rigorously presented in
the forthcoming Subsections \ref{PS-JK1}
and~\ref{PS-JK2}.

\subsection{The case of two transition layers}\label{PS-JK1}
Given $x_1^0<x_2^0$ let us consider as initial condition in \eqref{vepeq}
\beq\label{vep0}
v_\ep^0(x)=\displaystyle\frac{\ep^{2s}}{\beta}
\sigma(0,x)+ u\left(
\displaystyle\frac{x-x_1^0}{\ep}\right)+u\left(
\displaystyle\frac{x_2^0-x}{\ep}\right)-1,
\eeq
where
\beq\label{beta}\beta:=W''(0)>0,\eeq and $u$ is
solution of \eqref{u}. One may consider in the formula above the term $\sigma(0,x)$ as the effect on the dislocation of the external stess at the initial time (of course, if no stress is applied at the initial time, this additional dislocation vanishes).
Let us introduce the solution $(x_1(t),x_2(t))$ to the system 
\beq\label{dynamicalsysNintro}\begin{cases}  \dot{x}_1=\gamma\left(
\displaystyle-\frac{x_1-x_2}{2s |x_1-x_2|^{2s+1}}-\sigma(t,x_1)\right)&\text{in }(0,T_c)\\
\dot{x}_2=\gamma\left(
\displaystyle-\frac{x_2-x_1}{2s |x_2-x_1|^{2s}}+\sigma(t,x_2)\right)&\text{in }(0,T_c)\\
x_1(0)=x_1^0,\,x_2(0)=x_2^0,
\end{cases}\eeq
where \beq\label{gamma}\gamma:=\left(\displaystyle\int_\R (u'(x))^2 dx\right)^{-1},\eeq
and $(0,T_c)$ is the maximal interval where the system \eqref{dynamicalsysNintro} is well defined, i.e. $x_1(t)<x_2(t)$ for any $t\in[0,T_c)$ and 
 $x_1(T_c)=x_2(T_c)$.

In general, it may happen that $T_c=+\infty$, i.e.
no collision occurs. On the other hand, it can be shown that
when either the external stress is small or the particles
are initially close to collision, then $T_c<+\infty$.
More precisely,
in \cite{pv} we proved that if the following condition is satisfied 
 \begin{equation*}
\text{either }\sigma\leq 0\quad\text{or}\quad x_2^0-x_1^0<\left(\displaystyle\frac{1}{2s\|\sigma\|_\infty}\right)^\frac{1}{2s},\end{equation*}
then the collision time $T_c$ is finite.
 
In the setting of finite collision time, we prove here that the dislocation
function~$v_\ep$,
after a time~$T_\ep$, which is only slightly larger than the collision
time~$T_c$, becomes small with $\ep$.
The precise result goes as follows:
 
 \begin{thm}\label{mainthmbeforecoll} Assume that  \eqref{Wass}, \eqref{sigmaassump},  \eqref{vep0} hold and $T_c<+\infty$. Let $v_\ep$ be the solution of \eqref{vepeq}-\eqref{vep0}. Then there exists $\ep_0>0$  such that for any $\ep<\ep_0$   there exist 
 $T_\ep,\,\varrho_\ep>0$  such that
 $$T_\ep=T_c+o(1),\quad \varrho_\ep=o(1)\quad\text{as } \ep\to 0$$ and 

 \beq\label{vlesep2s} v_\ep(T_\ep,x)\leq \varrho_\ep\quad\text{for any }x\in\R.\eeq
 \end{thm}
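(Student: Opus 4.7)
The plan is to combine the pre-collision layer-ansatz machinery from \cite{pv,pv2} with an explicit trans-collision supersolution that exploits the exponential relaxation of the equation around the equilibrium $\ep^{2s}\sigma/\beta$ at the fast time scale $\ep^{2s+1}$.

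I would first use the barrier/sub-super-solution construction developed for this model to dominate $v_\ep$ from above on a maximal interval $[0,t^*_\ep]$ by a perturbed superposition
\[
V^+_\ep(t,x) = \frac{\ep^{2s}}{\beta}\sigma(t,x) + u\Big(\frac{x-y_1^\ep(t)}{\ep}\Big) + u\Big(\frac{y_2^\ep(t)-x}{\ep}\Big) - 1 + \mu_\ep,
\]
where $(y_1^\ep,y_2^\ep)$ solves a small perturbation of \eqref{dynamicalsysNintro} and the corrector satisfies $\mu_\ep=o(1)$. By continuous dependence of \eqref{dynamicalsysNintro} on its initial data, the associated perturbed collision time $T^+_\ep$ satisfies $T^+_\ep=T_c+o(1)$, and pushing $t^*_\ep$ as close to $T^+_\ep$ as the barrier construction allows yields $t^*_\ep=T_c-o(1)$ with $y_2^\ep(t^*_\ep)-y_1^\ep(t^*_\ep)$ small.

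For $t\in[t^*_\ep,T_\ep]$, since $W''(0)=\beta>0$, the equation has a dissipation rate of order $\beta/\ep^{2s+1}$ near the stationary profile $\ep^{2s}\sigma/\beta$. I therefore take the $x$-constant supersolution
\[
\bar v(t,x) = \frac{\ep^{2s}}{\beta}\sigma(t,x) + K e^{-c(t-t^*_\ep)/\ep^{2s+1}},
\]
with $c\in(0,\beta)$. When $\bar v$ is plugged into \eqref{vepeq}, the $\sigma/\ep$ terms cancel exactly, $\I$ annihilates the $x$-constant part, and after multiplying the supersolution inequality by $\ep^{2s+1}$ and using $W'(w)=\beta w+O(w^2)$, the condition reduces to
\[
(\beta-c)K e^{-c(t-t^*_\ep)/\ep^{2s+1}} - MK^2 e^{-2c(t-t^*_\ep)/\ep^{2s+1}} \geq O(\ep^{4s}),
\]
which holds as soon as $K\leq(\beta-c)/M$. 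Setting $T_\ep-t^*_\ep=\ep^{2s+1}\log\ep^{-1}$ then yields $T_\ep=T_c+o(1)$ together with $\bar v(T_\ep,x)\leq\ep^{2s}\|\sigma\|_\infty/\beta + K\ep^c=:\varrho_\ep=o(1)$, and the comparison principle for \eqref{vepeq} gives \eqref{vlesep2s}.

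The main obstacle is the coupling between the two steps: the supersolution requires $K\leq(\beta-c)/M$, which may be smaller than $1$, whereas the residual bump $u((\cdot-y_1^\ep)/\ep)+u((y_2^\ep-\cdot)/\ep)-1$ at $t=t^*_\ep$ can \emph{a priori} have amplitude of order one. Resolving this amounts to pushing the first step up to a time at which $y_2^\ep-y_1^\ep$ is significantly smaller than $\ep$, since only then does the superposition itself become small (it vanishes when $y_1^\ep=y_2^\ep$, because $u(0)=1/2$). An equivalent strategy is to iterate the second step over consecutive time slabs of length $\sim\ep^{2s+1}$: at each slab the amplitude $K$ decreases by a factor $e^{-c}$, so after a few slabs $K$ is small enough that the quadratic remainder of $W'$ becomes harmless and the bookkeeping closes.
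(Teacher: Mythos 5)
There is a genuine gap, and it sits exactly where you flag it: the hand-off between the two steps does not close, and neither of your proposed repairs works.

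First, you cannot ``push the first step up to a time at which $y_2^\ep-y_1^\ep$ is significantly smaller than $\ep$''. The layer ansatz is a supersolution only while the particle separation $\theta$ stays much larger than $\ep$: the error terms in the supersolution computation scale like powers of $\ep/\theta$ (in the paper, quantities such as $\ep^{2s}\theta_\ep^{-4s}$ and $\ep^{\alpha}\theta_\ep^{-(1+2s)}$ must be $o(1)$, which forces $\ep\theta_\ep^{-1}=o(1)$), and the reduced ODE itself is singular at collision. So the barrier argument necessarily stops at a separation $\theta_\ep\gg\ep$, at which point the residual bump $u(\tfrac{x-y_1^\ep}{\ep})+u(\tfrac{y_2^\ep-x}{\ep})-1$ near the midpoint has amplitude close to $1$, not $o(1)$.

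Second, the iterated $x$-constant supersolution cannot absorb an amplitude near $1$. Since $W$ is $1$-periodic with $W(0)=W(1)=0$ and $W>0$ in between, $W'$ vanishes at $1$ and is negative just below $1$; hence the ODE $\dot h=-W'(h)$ started at a value close to $1$ \emph{increases} towards $1$. The constant barrier therefore does not decay at all from that starting amplitude — the obstruction is not the quadratic remainder of $W'$ but the fact that values near $1$ lie in the basin of the wrong well. The pointwise reaction term cannot dissipate the narrow order-one bump; only the nonlocal operator can, and your constant-in-$x$ ansatz is annihilated by $\I$. (The constant/ODE relaxation is exactly how the paper proves the \emph{subsequent} exponential-decay statement, Theorem~\ref{thmexponentialdecay}, but only after the amplitude is already $\varrho_\ep=o(1)$.)

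The missing idea is the paper's second, \emph{asymmetric} perturbed particle system \eqref{dynamicalsysbarbar2}: restart at time $T^1_\ep$ with the left particle shifted left by $\theta_\ep$ and the right particle shifted right by $K\theta_\ep$ for a large fixed $K$. Lemma~\ref{teplem} shows that within a further time $t_\ep=O(\theta_\ep^{2s+1})$ the new left particle overtakes the old right particle while the two new particles remain at distance $\ge\theta_\ep$, so the barrier stays a supersolution and every $x$ in the left half-line lands in the polynomial tails of both layers, giving $v_\ep\le C(\ep/\theta_\ep)^{2s}=:\varrho_\ep$; a mirror construction handles the right half-line. This overtaking step is what actually kills the order-one bump, and it has no counterpart in your proposal.
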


The result above  can be made precise by saying that,
if the system is not subject to any external stress,
then the dislocation function~$v_\ep$ decays in time
exponentially fast. More precisely, we have: 

 \begin{thm}\label{thmexponentialdecay} Assume that  \eqref{Wass}, \eqref{sigmaassump}, \eqref{vep0} hold and that $\sigma\equiv0$. Let $v_\ep$ be the solution of \eqref{vepeq}-\eqref{vep0}. Then there exist $\ep_0>0$ and $c>0$   such that for any $\ep<\ep_0$ we have
  \beq\label{vexpontozero}|v_\ep(t,x)|\leq \varrho_\ep e^{c\frac{T_\ep-t}{\ep^{2s+1}}},\quad\text{for any }x\in\R\text{ and }t\geq T_\ep,\eeq where $T_\ep$ and $\varrho_\ep$ are given in Theorem 
  \ref{mainthmbeforecoll}. 
  \end{thm}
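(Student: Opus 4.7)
The strategy is to combine Theorem \ref{mainthmbeforecoll} with a spatially homogeneous exponential barrier tailored to the time scale $\ep^{2s+1}$ that controls \eqref{vepeq}. The key observation is that, when $\sigma\equiv 0$, the constant function $v\equiv 0$ is a stationary solution of \eqref{vepeq} (since $W'(0)=0$), and the linearization of $W'$ at $0$, encoded by $\beta=W''(0)>0$, provides the exponential rate once $v_\ep$ has been brought close to $0$ at time $T_\ep$.

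Concretely, I would set
\[
\Phi(t):=\varrho_\ep\, e^{-c(t-T_\ep)/\ep^{2s+1}}
\]
and show that $\Phi$ is a supersolution and $-\Phi$ a subsolution of \eqref{vepeq} on $[T_\ep,+\infty)\times\R$ for an appropriate $c>0$. Since $\Phi$ is constant in $x$, $\I\Phi\equiv 0$, and the supersolution inequality reduces to $W'(\Phi)\ge c\,\Phi$. Because $W\in C^{3,\alpha}$ with $W'(0)=0$ and $W''(0)=\beta>0$, a Taylor expansion yields $W'(z)\ge\tfrac{\beta}{2}z$ and $W'(-z)\le-\tfrac{\beta}{2}z$ for every $z\in[0,z_0]$ with $z_0$ sufficiently small depending only on $W$. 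Since $|\Phi(t)|\le\varrho_\ep\to 0$ as $\ep\to 0$, the expansion is applicable uniformly in $t\ge T_\ep$ for $\ep$ small, and the choice $c:=\beta/2$ makes $\Phi$ and $-\Phi$ into super- and subsolutions, respectively.

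The next step is the parabolic comparison principle applied on $[T_\ep,+\infty)\times\R$. Theorem \ref{mainthmbeforecoll} already gives $v_\ep(T_\ep,\cdot)\le\varrho_\ep=\Phi(T_\ep)$; the matching lower bound $v_\ep(T_\ep,\cdot)\ge-\varrho_\ep$ is not stated explicitly there, but I would recover it by adapting the same construction, since the barriers that pin $v_\ep$ from above in the proof of Theorem \ref{mainthmbeforecoll} admit a symmetric counterpart from below. Granted this two-sided initial estimate at $T_\ep$, a direct comparison of $\pm\Phi$ with $v_\ep$ on $[T_\ep,+\infty)\times\R$ (valid since both are bounded on the whole space) yields $|v_\ep(t,x)|\le\Phi(t)$, which is precisely \eqref{vexpontozero}.

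The hard part, in my view, is not the barrier argument itself, which is essentially an ODE comparison once the right scaling is identified, but rather the preparation of the system at time $T_\ep$, which is already absorbed into Theorem \ref{mainthmbeforecoll}. Once the solution has entered the linearized regime around $0$, all of the time decay comes from the local part of the equation: $\I$ vanishes on spatially constant profiles and therefore contributes nothing to $\Phi$, while the prefactor $\ep^{-2s-1}$ in front of $W'$ in \eqref{vepeq} fixes the exponential rate $\beta/(2\ep^{2s+1})$. This mechanism also accounts for the space-time dichotomy highlighted in the abstract: polynomial tails in $x$ persist through the evolution of the layers $u$, whereas the exponential-in-$t$ decay is dictated entirely by the pointwise ODE $\dot{\Phi}+\tfrac{1}{\ep^{2s+1}}W'(\Phi)=0$ satisfied by any spatially constant profile.
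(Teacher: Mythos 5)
Your proposal is correct and follows essentially the same route as the paper: a spatially constant exponential barrier $\pm\Phi$, for which $\I\Phi\equiv 0$, combined with the linearization $W'(z)\ge\frac{\beta}{2}z$ for $0\le z\le\varrho_\ep$ and the comparison principle (the paper phrases this through the exact solution of the ODE $h_\tau+W'(h)=0$ rescaled by $\ep^{-2s-1}$, and then bounds $h$ by the exponential, which is equivalent to your direct supersolution check with $c=\beta/2$). The one place where you take a heavier path than necessary is the lower bound at time $T_\ep$: rather than mirroring the whole barrier construction of Theorem \ref{mainthmbeforecoll} from below, the paper simply observes from \eqref{uinfinity} that the initial datum satisfies $v_\ep^0\ge -C\ep^{2s}\ge -\varrho_\ep$ and compares $v_\ep$ on all of $[0,+\infty)\times\R$ with the spatially constant solution issued from $-C\ep^{2s}$, which already gives $v_\ep(t,\cdot)\ge -\varrho_\ep e^{-\frac{\beta}{2}\frac{t-T_\ep}{\ep^{2s+1}}}$ for $t\ge T_\ep$.
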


The evolution of
the two particle system and of the associated dislocation
function, as obtained in Theorems~\ref{mainthmbeforecoll}
and~\ref{thmexponentialdecay},
is described in Figure~1.
\bigskip\bigskip

\begin{center}
\includegraphics[width=0.95\textwidth]{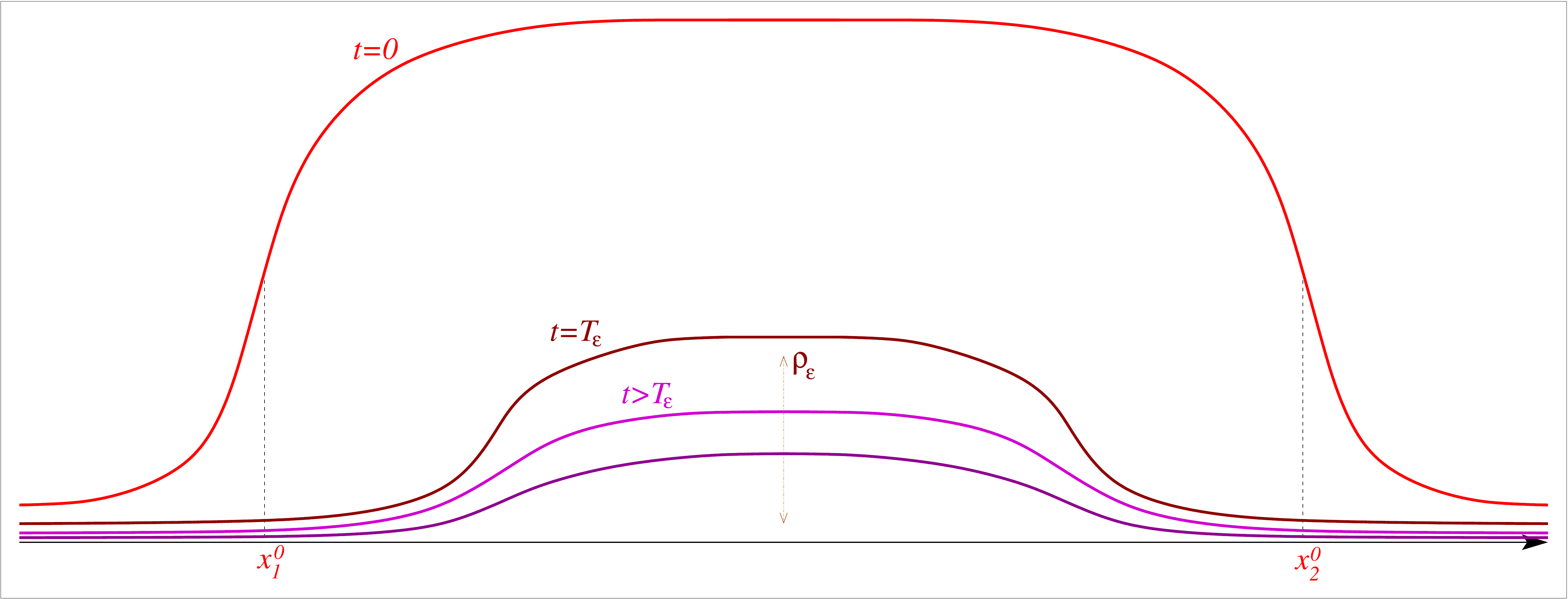}
{\footnotesize{\\
$\,$\\
Figure 1: Evolution of the dislocation function
in case of two particles.}}
\end{center}
\bigskip\bigskip

\subsection{The case of three transition layers}\label{PS-JK2}
Next, we consider the case in which the initial condition in \eqref{vepeq} is a  superposition of  three transition layers with different orientation. Precisely, let $\zeta_1=1$, $\zeta_2=-1$, $\zeta_3=1$. Given $x_1^0<x_2^0<x_3^0$, let us consider as initial condition in \eqref{vepeq}
\beq\label{vep03}
v_\ep^0(x)=\displaystyle\frac{\ep^{2s}}{\beta}
\sigma(0,x)+ \sum_{i=1}^3u\left(\zeta_i
\displaystyle\frac{x-x_i^0}{\ep}\right)-1,
\eeq 
where $\beta$ is given by \eqref{beta} and $u$ is
solution of \eqref{u}. 
Let us 
introduce the solution  $(x_1(t),x_2(t),x_3(t))$ to the following
system: for $i=1,2,3$
\beq\label{dynamicalsys3}\begin{cases}\dot{x}_i=\gamma\left(
\displaystyle\sum_{j\neq i}\zeta_i\zeta_j 
\displaystyle\frac{x_i-x_j}{2s |x_i-x_j|^{1+2s}}-\zeta_i\sigma(t,x_i)\right)&\text{in }(0,T_c)\\
 x_i(0)=x_i^0,
\end{cases}\eeq
 where $\gamma$ is given by \eqref{gamma} and $T_c$ is the collision time of system \eqref{dynamicalsys3}, i.e. 
 $$x_{i+1}(t)>x_{i}(t)\quad\text{for any }t\in[0,T_c)\text{ and }i=1,2$$ and there exist $i_0$ such that
 $$x_{i_0+1}(T_c)=x_{i_0}(T_c).$$
The first result that we prove in the three particle case is
the analogue of Theorem~\ref{mainthmbeforecoll}. That is, we
show that after some time that is just slightly bigger than the
collision time, the dislocation function becomes comparable, up
to a small error, with the associated steady state.
The case of three particles is, on the other hand,
different from the case of two particles, since the steady state
associated with the case of three particles is the heteroclinic
(and not the trivial function as in the case of two particles).

This phenomenon may be, roughly speaking, explained
by the fact that in case of two particles, the collision
of the two particles ``annihilate'' all the dynamics,
nothing more is left and the system relaxes to the trivial equilibrium.
Conversely, in the case of three particles,
one has that two particles ``annihilate'' each other, but
the third particle ``survives'', and this produces a jump
in the dislocation function -- indeed, these ``purely mathematical''
particles correspond to an excursion of the dislocation,
from two equilibria, which is modeled by the standard
transition layer in~\eqref{u}. The precise result is the following:

 \begin{thm}\label{mainthmbeforecoll3} Assume that  \eqref{Wass}, \eqref{sigmaassump},  \eqref{vep0} hold and $T_c<+\infty$. Let $v_\ep$ be the solution of \eqref{vepeq}-\eqref{vep03}. Then there exists $\ep_0>0$  such that for any $\ep<\ep_0$   there exist 
 $T_\ep^1,\,T^2_\ep,\,\varrho_\ep>0$  and  $y_\ep,\,z_\ep$ such that
 $$T_\ep^1,\,T_\ep^2=T_c+o(1),\quad \varrho_\ep=o(1)\quad\text{as } \ep\to 0,$$ 
 \beqs |z_\ep-y_\ep|=o(1)\quad\text{as } \ep\to 0\eeqs and for any $x\in\R$
 \beq\label{vlesep+layer} v_\ep(T^1_\ep,x)\leq u\left(\frac{x-y_\ep}{\ep}\right)+ \varrho_\ep\,\eeq
 and 
 \beq\label{vlesep+layerbelow}v_\ep(T^2_\ep,x)\geq u\left(\frac{x-z_\ep}{\ep}\right)- \varrho_\ep,\eeq
 where $u$ is the solution of \eqref{u}.
 \end{thm}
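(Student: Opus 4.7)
I would adapt the barrier/comparison strategy of Theorem~\ref{mainthmbeforecoll} to the three-particle setting, in which the surviving configuration is a single transition layer rather than a constant. After reordering, assume that the collision at~$T_c$ involves two consecutive particles $x_{i_0}, x_{i_0+1}$ with opposite orientations $\zeta_{i_0}\zeta_{i_0+1}=-1$; locally these two layers annihilate each other near $T_c$, while the remaining particle $x_k$ (with $k\in\{1,2,3\}\setminus\{i_0,i_0+1\}$) persists as an isolated monotone transition. The case of a simultaneous triple collision is handled analogously with an extra annihilation layer.

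For the upper bound \eqref{vlesep+layer}, I would fix a small $\delta>0$ (to be coupled with $\ep\to 0^+$ at the end) and at the time $T_c-\delta$ invoke the ansatz-based approximation developed in Subsection~2.2 of \cite{pv2}:
\[
v_\ep(T_c-\delta,x) \approx \tfrac{\ep^{2s}}{\beta}\sigma(T_c-\delta,x) + \sum_{i=1}^{3} u\!\left(\zeta_i \tfrac{x - x_i(T_c-\delta)}{\ep}\right) - 1.
\]
I would then construct a supersolution of the form
\[
\Phi^+_\ep(t,x) := u\!\left(\tfrac{x - y_\ep(t)}{\ep}\right) + A\,\varrho_\ep\, e^{c(t-T_c+\delta)/\ep^{2s+1}},
\]
with $y_\ep(t)$ shadowing the surviving trajectory $x_k(t)$ (suitably extended past $T_c$) and $A,c>0$ tuned to absorb the layer--layer interactions and the stress corrector. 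That $\Phi^+_\ep$ satisfies \eqref{vepeq} with strict slack follows from $\I(u)=W'(u)$, the polynomial tail $u(\xi)-\mathbf{1}_{\xi>0}=O(|\xi|^{-2s})$, and the positivity $W''(0)=W''(1)=\beta>0$ on the plateaus, which turns the time-exponential corrector into a genuine damper of the residual.

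The initial ordering $v_\ep(T_c-\delta,\cdot)\le \Phi^+_\ep(T_c-\delta,\cdot)$ follows, via the approximation above, from the two-particle collision analysis of Theorem~\ref{mainthmbeforecoll} applied locally to the pair $(x_{i_0},x_{i_0+1})$: that theorem is used here as a local building block controlling the residual $u((x-x_{i_0})/\ep)+u((x_{i_0+1}-x)/\ep)-1$ uniformly by $\varrho_\ep$. Parabolic comparison for \eqref{vepeq} on $[T_c-\delta,T_\ep^1]$ then yields \eqref{vlesep+layer} with $y_\ep:=y_\ep(T_\ep^1)$. The lower bound \eqref{vlesep+layerbelow} is produced by the mirror construction, with subsolution
\[
\Phi^-_\ep(t,x) := u\!\left(\tfrac{x - z_\ep(t)}{\ep}\right) - A\,\varrho_\ep\, e^{c(t-T_c+\delta)/\ep^{2s+1}},
\]
delivering the time $T_\ep^2$ and point $z_\ep$. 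The closeness $|y_\ep-z_\ep|=o(1)$ is automatic because both shadow trajectories track the same surviving particle $x_k$ up to errors that vanish with~$\ep$.

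The main obstacle is verifying that $\Phi^\pm_\ep$ is actually a super/subsolution across the collision time. The prefactor $1/\ep^{2s+1}$ in \eqref{vepeq} amplifies any defect of order $\ep^{2s+1}$ into an $O(1)$ error, so the shadow trajectory $y_\ep(t)$ and the correctors must be tuned with matching precision. Nonlocality of $\I$ adds a further complication: the vanishing pair influences the surviving layer only through a polynomially decaying tail, and this long-range coupling must be carefully bookkept when evaluating $\I\Phi^+_\ep$ near $y_\ep$, since the usual exponential decoupling is unavailable. The saving grace is that the linearization $-W''$ near the equilibria $0$ and $1$ is a strictly positive multiple of the identity, which is precisely what closes the supersolution inequality and powers the exponential-in-$t$ damping of the correctors implicit in $\varrho_\ep$.
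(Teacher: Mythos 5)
There is a genuine gap, in fact several. First, the claimed initial ordering $v_\ep(T_c-\delta,\cdot)\le \Phi^+_\ep(T_c-\delta,\cdot)$ is false for a fixed $\delta>0$: at that time the two colliding particles $x_{i_0},x_{i_0+1}$ are still at a distance bounded below independently of $\ep$, so on the interval between them the three-layer profile (and hence $v_\ep$, up to $O(\ep^{2s})$) is close to $1$, while your barrier $u((x-y_\ep)/\ep)+A\varrho_\ep$, centered near the surviving particle, is close to $A\varrho_\ep\ll 1$ there. Theorem~\ref{mainthmbeforecoll} cannot be ``applied locally'' to fix this: it is a statement about the PDE solution emanating from a genuine two-layer initial datum, valid only \emph{after} that system's own collision time $T_\ep>T_c$, not a pointwise bound on the sum of two layer profiles before collision. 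Relatedly, the ``$\approx$'' you invoke at $T_c-\delta$ is only heuristic; turning it into a two-sided inequality is exactly what the paper's machinery does (the $\delta$-perturbed ODE systems \eqref{dynamicalsysbar3} and \eqref{ADD S}, the corrector $\psi$, Propositions~\ref{tclimprop3} and~\ref{thetaeprop3}, and comparison from time $0$, not from $T_c-\delta$). Second, even granting comparison, your corrector $A\varrho_\ep e^{c(t-T_c+\delta)/\ep^{2s+1}}$ with $c>0$ equals $e^{c\delta/\ep^{2s+1}}A\varrho_\ep$ at $t\approx T_c$, which blows up as $\ep\to0$; the conclusion $\varrho_\ep=o(1)$ cannot be extracted from such a barrier.

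The deepest missing ingredient is the mechanism by which the residual bump actually disappears. The paper first runs the supersolution $\vs_\ep$ of \eqref{vepansbar3} only up to the time $\overline T^1_\ep$ at which the minimal gap equals $\theta_\ep$, with $\ep\ll\theta_\ep\ll1$; at that moment the bump between the colliding pair still has height $\approx1$ on an interval of width $\theta_\ep\gg\ep$, so no single-layer-plus-$o(1)$ bound holds yet. Collapsing this bump requires the second, \emph{asymmetric} perturbed system \eqref{dynamicalsysbarbar3} (left particle shifted by $-\theta_\ep$, middle one by $+K\theta_\ep$) together with the ODE analysis of Lemmata~\ref{aMlemma} and~\ref{teplem3} showing that $\xss_1$ overtakes $\xs_2^\ep$ within a time $t_\ep$ while the gaps stay above $\theta_\ep$ — and this only works in the simple-collision regime $\xs_3^\ep-\xs_2^\ep\ge M\theta_\ep$; the near-triple-collision regime is handled by a different, direct argument. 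Your proposal treats the simple and triple cases as ``analogous'' and omits this entire stage, which is where the actual difficulty of the theorem lies.
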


Next result is the analogue of Theorem~\ref{thmexponentialdecay}
in the three particle setting. Roughly speaking, it says that,
after a small transition time after the collision, the dislocation function
relaxes towards the standard layer solution exponentially fast. The formal
statement is the following:
 
 \begin{thm}\label{thmexponentialdecay3} Assume that  \eqref{Wass}, \eqref{sigmaassump}, \eqref{vep0} hold and that $\sigma\equiv0$. Let $v_\ep$ be the solution of \eqref{vepeq}-\eqref{vep03}. Then there exist $\ep_0>0$ and $\mu>0$   such that for any $\ep<\ep_0$ there exists $K_\ep=o(1)$ as $\ep\to0$ such that 
   \beq\label{vexpontolayer}\begin{split}& v_\ep(t,x)\leq
   u\left(\frac{x-y_\ep+K_\ep \varrho_\ep \Big(1-e^{-\frac{\mu (t-T^1_\ep)}{\ep^{2s+1}}}\Big)}{\ep}\right)+ \varrho_\ep e^{-\frac{\mu (t-T^1_\ep)}{\ep^{2s+1}}},\quad
   \text{for any }x\in\R \text{ and }t\geq T^1_\ep,\end{split}\eeq 
   
   \beq\label{vexpontolayerbelow}v_\ep(t,x)\geq u\left(\frac{x-z_\ep-K_\ep \varrho_\ep \Big(1-e^{-\frac{\mu (t-T^2_\ep)}{\ep^{2s+1}}}\Big)}{\ep}\right) -\varrho_\ep e^{-\frac{\mu (t-T^2_\ep)}{\ep^{2s+1}}},\quad\text{for any }x\in\R \text{ and }t\geq T^2_\ep\eeq
     where $T_\ep^1,\,T_\ep^2,\,\varrho_\ep,\,y_\ep$ and $z_\ep$ are given in Theorem 
  \ref{mainthmbeforecoll3} and  $u$ is the solution of \eqref{u}.
  \end{thm}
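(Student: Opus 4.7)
My plan is to deduce the theorem from Theorem~\ref{mainthmbeforecoll3} by constructing explicit super- and sub-solutions that decay exponentially to a moving layer profile, and then invoking a parabolic comparison principle for \eqref{vepeq}. I will focus on \eqref{vexpontolayer}; the bound \eqref{vexpontolayerbelow} will follow by a symmetric construction.

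Set $\tau(t) := (t - T^1_\ep)/\ep^{2s+1}$ and define
\[
\Phi_\ep(t,x) := u\!\left(\frac{x - \xi_\ep(t)}{\ep}\right) + h_\ep(t),
\]
where $h_\ep(t) := \varrho_\ep e^{-\mu\tau(t)}$ and $\xi_\ep(t) := y_\ep - K_\ep\varrho_\ep(1 - e^{-\mu\tau(t)})$, with positive constants $\mu$ and $K_\ep$ to be chosen momentarily. At the initial time $t = T^1_\ep$ we have $\Phi_\ep(T^1_\ep,\cdot) = u((\,\cdot\, - y_\ep)/\ep) + \varrho_\ep$, so \eqref{vlesep+layer} supplies the initial ordering $v_\ep(T^1_\ep,\cdot) \leq \Phi_\ep(T^1_\ep,\cdot)$.

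The core computation will be to verify that $\Phi_\ep$ is a supersolution of \eqref{vepeq} with $\sigma \equiv 0$. Using that $u$ solves $\I u = W'(u)$, the scaling identity $\I[u((\,\cdot\, - \xi)/\ep)](x) = \ep^{-2s}(\I u)((x-\xi)/\ep)$, and a Taylor expansion $W'(u + h_\ep) = W'(u) + W''(u)\,h_\ep + O(h_\ep^2)$, legitimate by \eqref{Wass}, the supersolution inequality will reduce, after multiplying through by $\ep^{2s}/h_\ep > 0$, to the pointwise condition
\[
\frac{\mu K_\ep}{\ep}\, u'\!\left(\frac{x-\xi_\ep}{\ep}\right) + W''\!\left(u\!\left(\frac{x-\xi_\ep}{\ep}\right)\right) - \mu + O(\varrho_\ep) \;\geq\; 0.
\]
To establish this I will split $\R$ into a tail region $|x-\xi_\ep|/\ep > R$ and a core region $|x-\xi_\ep|/\ep \leq R$. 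On the tail region $u$ is close to $0$ or $1$, so $W''(u)$ is close to $\beta = W''(0) > 0$; choosing $0 < \mu < \beta/2$ and $R$ large enough makes $W''(u) - \mu$ bounded below by a positive constant, which absorbs the $O(\varrho_\ep)$ error for $\ep$ small. On the core region $u' \geq c_0 > 0$, so the shift contribution $\mu K_\ep u'/\ep$ dominates any negative value of $W''(u)$ provided $K_\ep/\ep$ exceeds a suitable constant; setting $K_\ep := C\ep$ then yields $K_\ep = o(1)$ as required, while still producing a valid supersolution.

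The main obstacle is precisely this core region: the linearized operator $\I - W''(u)$ around the basic layer $u$ has a kernel spanned by $u'$, reflecting the translational invariance of \eqref{u}, and so a naive ansatz of the form $u((x-y_\ep)/\ep) + h(t)$ alone cannot serve as a supersolution. The role of the moving shift $\xi_\ep(t)$ is to compensate this zero mode, and its exponential profile is dictated by matching the decay rate of $h_\ep$. Once the supersolution property is proved, the parabolic comparison principle applied to the nonlocal equation \eqref{vepeq} with the ordering at $t = T^1_\ep$ will yield \eqref{vexpontolayer}. The bound \eqref{vexpontolayerbelow} is obtained analogously, using \eqref{vlesep+layerbelow} as initial ordering and taking as subsolution
\[
\Psi_\ep(t,x) := u\!\left(\frac{x - z_\ep - K_\ep\varrho_\ep\bigl(1 - e^{-\mu(t-T^2_\ep)/\ep^{2s+1}}\bigr)}{\ep}\right) - \varrho_\ep e^{-\mu(t-T^2_\ep)/\ep^{2s+1}}.
\]
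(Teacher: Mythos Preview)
Your proposal is correct and follows essentially the same strategy as the paper: construct the moving-layer barrier $\Phi_\ep$, verify it is a supersolution by splitting into a core region (where the positive lower bound on $u'$ and the drift term $\mu K_\ep u'/\ep$ do the work) and a tail region (where $W''(u)$ is close to $\beta$ and beats $\mu$), and then apply the comparison principle from the ordering at $t=T^1_\ep$ given by Theorem~\ref{mainthmbeforecoll3}. The only notable difference is that you split at a fixed radius $R$ in the rescaled variable, which yields $K_\ep=O(\ep)$, whereas the paper splits at an $\ep$-dependent scale $\kappa_\ep$ with $\ep\ll\kappa_\ep$ and $\kappa_\ep^{2s+1}\ep^{-2s}=o(1)$, obtaining $K_\ep$ of this latter order; your choice is slightly simpler and gives a sharper (smaller) $K_\ep$, but the argument is otherwise the same.
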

  
  \begin{cor}\label{stationarycor}
  Under the assumptions of Theorem \ref{thmexponentialdecay3}, there exists $\ep_0>0$   such that for any $\ep<\ep_0$,  there exist a sequence $t_k\to+\infty$ as $k\to+\infty$, and a point $x_\ep\in\R$ with
  \beq\label{layer12point}y_\ep-K_\ep \varrho_\ep<x_\ep<z_\ep+K_\ep \varrho_\ep,\eeq
  such that
  \beq\label{stationarysollimit}v_\ep(t_k,x)\to u\left(\frac{x-x_\ep}{\ep}\right)\quad\text{as }k\to+\infty,\eeq
  where $y_\ep$, $z_\ep$, $K_\ep$ and $\varrho_\ep$ are given in Theorem 
  \ref{mainthmbeforecoll3} and  $u$ is the solution of \eqref{u}.
  \end{cor}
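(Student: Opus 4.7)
\dims
The plan is to pass to the long-time limit of $v_\ep(t,\cdot)$ by combining the two-sided layer bounds of Theorem~\ref{thmexponentialdecay3} with a standard parabolic compactness argument and a Lyapunov identification of the limit profile. First, I would let $t\to+\infty$ in \eqref{vexpontolayer} and \eqref{vexpontolayerbelow}: the exponential error terms vanish, so for any $\delta>0$ and all sufficiently large $t$ one has
\begin{equation*}
u\!\left(\frac{x-z_\ep-K_\ep\varrho_\ep}{\ep}\right)-\delta\;\le\;v_\ep(t,x)\;\le\;u\!\left(\frac{x-y_\ep+K_\ep\varrho_\ep}{\ep}\right)+\delta,\quad x\in\R.
\end{equation*}

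Since $v_\ep$ is uniformly bounded and $\sigma\equiv0$, interior regularity for the nonlocal parabolic equation \eqref{vepeq} provides uniform H\"older estimates in space-time for $v_\ep$ on $[T,\infty)\times\R$. Given any sequence $t_k\to+\infty$, I would then consider the time-translates $w_k(\tau,x):=v_\ep(t_k+\tau,x)$ and apply Arzel\`a-Ascoli together with a diagonal extraction, obtaining along a subsequence convergence in $C_{\mathrm{loc}}(\R\times\R)$ to an entire solution $w$ of the same equation, still sandwiched between the two stationary translates of $u$ appearing above.

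The key step is to show that $w$ is independent of $\tau$. For this I would invoke a LaSalle-type argument based on the gradient-flow structure of \eqref{vepeq} with $\sigma\equiv0$, with formal energy
\begin{equation*}
\mathcal{E}[v]:=\frac{1}{2}\iint_{\R\times\R}\frac{(v(x)-v(y))^2}{|x-y|^{1+2s}}\,dx\,dy+\frac{1}{\ep^{2s}}\int_\R W(v)\,dx.
\end{equation*}
Although $\mathcal{E}[v_\ep]$ is infinite, the renormalized energy $\mathcal{E}[v_\ep(t,\cdot)]-\mathcal{E}[\bar u]$, with $\bar u(x):=u((x-y_\ep+K_\ep\varrho_\ep)/\ep)$ the upper-sandwich layer, is finite on the trapping class, non-increasing in $t$, and bounded below by the analogous difference against the lower-sandwich layer (which carries the same $\mathcal{E}$-value, by translation invariance). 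Passing to the limit along $t_k$, this relative energy must be constant in $\tau$, which, combined with the dissipation identity $\frac{d}{dt}\mathcal{E}=-\ep\int v_t^2\,dx$, forces $w_\tau\equiv 0$. The stationary profile $v^*:=w(0,\cdot)$ then satisfies $\I v^*=\ep^{-2s}W'(v^*)$ in $\R$ with $v^*(-\infty)=0$ and $v^*(+\infty)=1$; by uniqueness up to translation of the layer solution characterized by \eqref{u} (cf.~\cite{pv2}), one obtains $v^*(x)=u((x-x_\ep)/\ep)$ for some $x_\ep\in\R$, giving \eqref{stationarysollimit}, and \eqref{layer12point} is then immediate from the sandwich bounds together with the strict monotonicity of $u$.

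The step I expect to be most delicate is the rigorous implementation of the Lyapunov argument: since the reference layer $u$ has only algebraic decay to its end-states, establishing the finiteness and the monotonicity of the renormalized Gagliardo-type double integral requires a careful cut-off analysis and control of boundary fluxes at infinity. A practical workaround, should the direct approach prove cumbersome, is to test the equation against $v_{\ep,t}$ multiplied by a smooth spatial cut-off, exploiting the polynomial decay of $v_\ep-\bar u$ at $\pm\infty$ to estimate the error terms and then letting the cut-off tend to the identity.
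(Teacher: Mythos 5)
Your outline (sandwich bounds, compactness, identification of the limit profile) parallels the paper's, but there is a genuine gap at the identification step, and your route to stationarity is much heavier than what the paper actually does.

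The gap is in the sentence ``by uniqueness up to translation of the layer solution characterized by \eqref{u}, one obtains $v^*(x)=u((x-x_\ep)/\ep)$''. The uniqueness asserted in Lemma \ref{uinfinitylem} is uniqueness within the class of solutions of \eqref{u}, which includes the monotonicity constraint $u'>0$ and the normalization at $0$. The limit $v^*$ you construct is only known to solve $\I v^*=\ep^{-2s}W'(v^*)$, to be trapped between two translates of $u$, and hence to have limits $0$ and $1$ at $\mp\infty$; it is \emph{not} known to be monotone, so the cited uniqueness does not apply to it. Classifying such possibly non-monotone solutions is precisely the main content of the paper's proof: one normalizes $x_\ep$ by $v_\ep^\infty(x_\ep)=\tfrac12$, slides the translate $u_\ep(x):=u((x-x_\ep)/\ep)$ under $v_\ep^\infty(\cdot+k)+a$, takes the infimal shift $\overline{k}(a)$, evaluates the equation at the resulting touching point $x_a$ to obtain $W'(u_\ep(x_a)-a)\ge W'(u_\ep(x_a))$, uses the behavior of $W'$ near the wells (where $W''\ge\beta/2>0$) to prevent $x_a$ and $\overline{k}(a)$ from escaping to infinity, and finally lets $a\to0^+$ so that the vanishing of the principal value integral forces $v_\ep^\infty(\cdot+k_0)\equiv u_\ep$, with $k_0=0$ by the normalization. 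Without this sliding argument (or an equivalent classification of non-monotone bounded solutions with prescribed limits) your proof is incomplete at its decisive step.

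Concerning stationarity of the limit: the paper does not run any Lyapunov or LaSalle argument. It uses the uniform-in-time H\"older estimates of \cite{mp} to extract \emph{one} sequence $t_k\to+\infty$ along which $v_\ep(t_k,\cdot)$ converges to a viscosity solution of the stationary equation, citing Step 2 of Theorem 2 of \cite{psv}; since the corollary only claims convergence along some sequence, this suffices. Your renormalized-energy approach would additionally have to face the fact that the Gagliardo energy of a single layer in dimension one is infinite for $2s\le1$ (the tails of $u$ decay only like $|x|^{-2s}$), which is exactly the difficulty you flag as delicate; it is entirely avoidable here. The final deduction of \eqref{layer12point} from the sandwich bounds and the strict monotonicity of $u$ is fine.
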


The results of Theorems~\ref{mainthmbeforecoll3} and~\ref{thmexponentialdecay3}
and Corollary~\ref{stationarycor} are represented in Figure~2,
where we sketched 
the evolution of the dislocation function
and of the associated particle system
in the case of three particles with alternate orientations.
\bigskip\bigskip

\begin{center}
\includegraphics[width=0.95\textwidth]{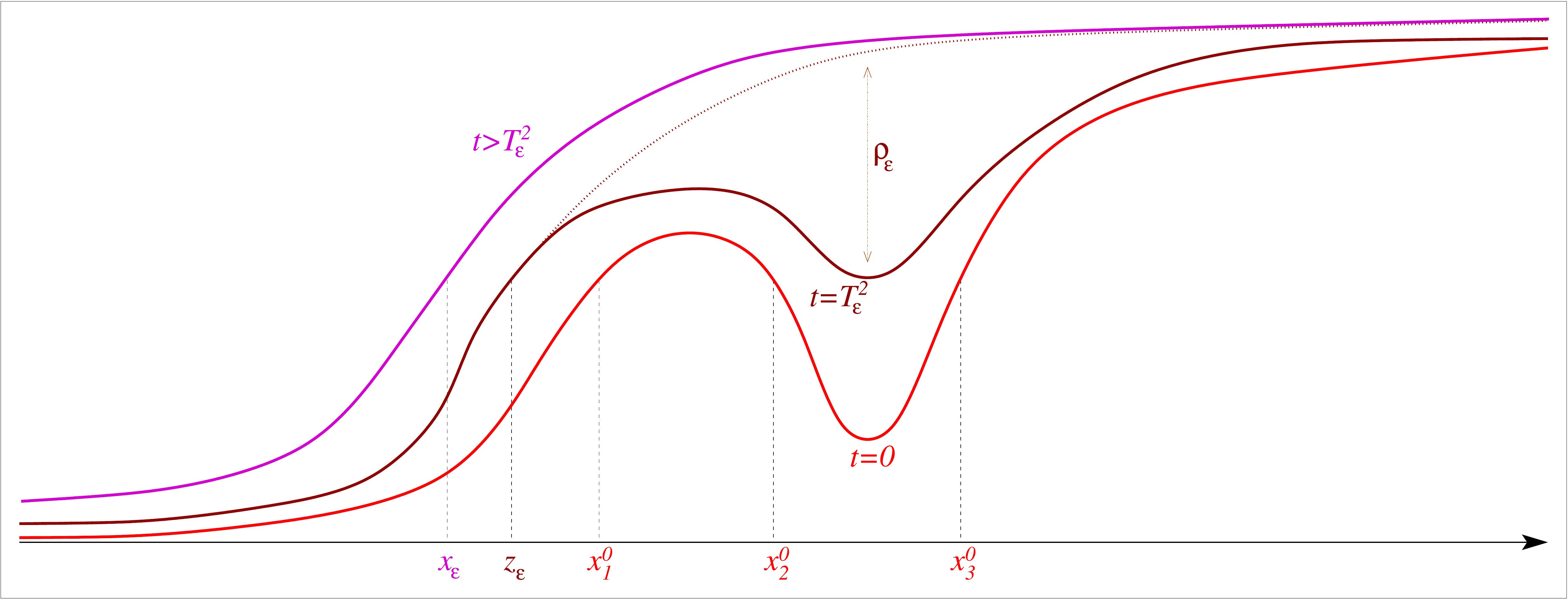}
{\footnotesize{\\
$\,$\\
Figure 2: Evolution of the dislocation function
in case of three particles.}}
\end{center}
\bigskip\bigskip

Notice that the external stress $\sigma$ is of course given and does not depend on the orientation of the dislocation, since it is an external force. Nevertheless its elastic effect on the motion of the dislocations do depend on the relative orientations, as given in \eqref{dynamicalsysNintro} and \eqref{dynamicalsys3}.

It is worth to point out that the case of three particles
provides structurally richer phenomena than the case of two particles.
Indeed, in the case of three particles we
have two different types of collision: simple and triple. 
The simple collision occurs when only
two particles collide at time $T_c$, i.e., either
\beqs x_1(T_c)= x_2(T_c)\quad\text{and}\quad x_3(T_c)>x_2(T_c),\eeqs or 
\beqs x_2(T_c)= x_3(T_c)\quad\text{and}\quad x_1(T_c)<x_2(T_c).\eeqs
In the triple collision case, the three particles collide 
together and simultaneously, i.e.
\beqs x_1(T_c)= x_2(T_c)= x_3(T_c).\eeqs

In \cite{pv2}, we proved that if $\sigma\equiv 0$, then for any choice of the initial condition $(x_1^0,x_2^0,x_3^0)$ we have a collision in a finite time. Moreover a triple collision is possible if and only if 
$$x_2^0-x_1^0=x_3^0-x_2^0.$$
The proofs of the results in the three particle setting
will have to take into account the distinction between simple
and triple collisions (on the one hand, the simple collision
is ``more generic'' and less singular, on the other hand,
the triple collision case has the technical
advantage of concentrating
all the relevant phenomena of the dynamics at just a single point).
\medskip

Additional results concerning relaxation times
and asymptotics of the Peierls-Nabarro model
will be given in the forthcoming paper~\cite{pv-prog}.\medskip

The rest of the paper is organized as follows.
In Section~\ref{PP} we discuss the basic properties
of the basic transition layer and of the solution of
a corrector equation. The main results of this
paper (that are
Theorems \ref{mainthmbeforecoll},
\ref{thmexponentialdecay},
\ref{mainthmbeforecoll3} and~\ref{thmexponentialdecay3},
and Corollary \ref{stationarycor})
are proved in Sections~\ref{proofmainthmbeforecoll},
\ref{ETA BETA}, \ref{R F 23}, \ref{R F 24}
and~\ref{R F 25}.

The proof of the main results rely on some auxiliary
lemmata which can be proved simultaneously in the case
of two particles and in the case of three particles:
for this reason, the proof of all these common results
is postponed to Section~\ref{thetaeppropsecproof}.


\section{Preliminary observations}\label{PP}

\subsection{Toolbox}
In this section we recall some general auxiliary results that  will be used in the rest of the paper. In what follows we denote by $H$ the Heaviside function. 
 \begin{lem}\label{uinfinitylem} Assume that  \eqref{Wass} holds, then there exists a unique solution $u\in C^{2,\alpha}(\R)$ of \eqref{u}. Moreover,
  there exist   constants $C,c >0$ and $\kappa>2s$ (only depending on~$s$)
such that 
\begin{equation}\label{uinfinity}\left|u(x)-H(x)+\displaystyle\frac{1}{2sW''(0) 
}\displaystyle\frac{x}{|x|^{2s+1}}\right|\leq \displaystyle\frac{C}{|x|^{\kappa}},\quad\text{for }|x|\geq 1,
\end{equation} 
and
\begin{equation}\label{u'infinity}\frac{c}{|x|^{1+2s}}\leq u'(x)\leq \displaystyle\frac{C}{|x|^{1+2s}}\quad\text{for }|x|\geq 1.
\end{equation} 
\end{lem}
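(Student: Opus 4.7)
The plan is to handle separately (i) existence, uniqueness and $C^{2,\alpha}$ regularity of $u$; (ii) the asymptotic expansion \eqref{uinfinity}; and (iii) the derivative estimate \eqref{u'infinity}. For (i) I would invoke the by-now standard variational/sliding construction of the basic transition layer for the fractional Allen--Cahn / Peierls--Nabarro operator with periodic double-well potential, as carried out in the previously cited literature on the Peierls--Nabarro model: one obtains a strictly increasing solution $u\in C^{2,\alpha}(\R)$ of $\I(u)=W'(u)$ with the prescribed limits at $\pm\infty$, uniqueness up to translation follows from the sliding method, and the normalization $u(0)=1/2$ then fixes the translation. The $C^{2,\alpha}$ regularity comes from a Schauder bootstrap for $\I$, using $W\in C^{3,\alpha}$.

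The core of the argument is \eqref{uinfinity}. The starting observation is the explicit pointwise identity
$$
\I[H](x)=-\frac{1}{2s}\,\frac{x}{|x|^{2s+1}}\qquad (x\ne 0),
$$
obtained by splitting the defining integral at $y=-x$ (the jump of $H$). Set $p(x):=\frac{1}{2s\beta}\,\frac{x}{|x|^{2s+1}}$; the scaling $\I[\phi(\lambda\cdot)](x)=\lambda^{2s}\I[\phi](\lambda x)$ applied to the $(-2s)$-homogeneous function $p$ (after a smooth cutoff at the origin, which contributes only a lower order bounded error at infinity) shows that $\I[p](x)=O(|x|^{-4s})$ as $|x|\to\infty$. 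Writing $w:=u-H+p$ and combining the two facts with the Taylor expansion of $W'$ near $0$ and $1$, using crucially that periodicity forces $W''(1)=W''(0)=\beta$, one finds that outside a compact set
$$
\I[w](x)-\beta\,w(x)=O\bigl(|x|^{-4s}\bigr)+O\bigl(w(x)^{2}\bigr).
$$
A comparison argument with barriers of the form $\pm A\,|x|^{-\kappa}$ for the linear operator $\I-\beta$, iterated a finite number of times, upgrades the a priori $w\to 0$ at infinity (from (i) and the nonlocal maximum principle) to the polynomial rate $|w(x)|\le C|x|^{-\kappa}$ for some $\kappa>2s$, which is exactly \eqref{uinfinity}.

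For \eqref{u'infinity}, differentiating $\I(u)=W'(u)$ gives $\I[u']=W''(u)\,u'$. Differentiating the expansion $u=H-p+w$ away from $x=0$ yields $u'(x)=-p'(x)+w'(x)=\beta^{-1}|x|^{-(1+2s)}+w'(x)$, so controlling $w'$ by a further Schauder bootstrap produces the upper bound $u'(x)\le C|x|^{-(1+2s)}$. The matching lower bound uses that $u'>0$ everywhere (monotonicity of $u$), together with a Hopf-type comparison for the linear equation $\I[u']-W''(u)\,u'=0$: since $W''(u(x))\to\beta>0$ at infinity by \eqref{uinfinity}, one can test $u'$ against $c|x|^{-(1+2s)}$ from below far out and propagate the positivity inward via the nonlocal maximum principle.

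I expect the main obstacle to be closing the bootstrap in step (ii). Nonlocality forces one to handle test functions globally (so the barrier argument must be carried out on all of $\R$, not on a half-line), and the exponent $\kappa$ has to be chosen to balance the error from $\I[p]$ of order $|x|^{-4s}$ against the quadratic self-interaction $w^{2}$; the periodicity-driven cancellation $W''(0)=W''(1)$ is essential, since it is what makes the \emph{same} correction $p$ work on both sides of the transition. Once \eqref{uinfinity} is established, \eqref{u'infinity} reduces to a nonlocal Hopf-type statement for the operator $\I-W''(u)$, which is technical but by now standard.
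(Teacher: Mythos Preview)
The paper's own proof of this lemma is not a proof at all: it simply cites \cite{psv,cs} for existence and uniqueness, \cite{gonzalezmonneau,dpv,dfv} for the asymptotic expansion~\eqref{uinfinity}, and \cite{cs} for the derivative bounds~\eqref{u'infinity}. So there is nothing to compare at the level of argument---the paper defers entirely to the literature.

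Your sketch, by contrast, outlines an actual proof, and the strategy you describe is essentially the one carried out in those cited references. The identity $\I[H](x)=-\frac{1}{2s}\,x\,|x|^{-2s-1}$, the ansatz $w=u-H+p$ with $p=\frac{1}{2s\beta}\,x\,|x|^{-2s-1}$, the linearization around the wells using $W''(0)=W''(1)=\beta$, and the barrier bootstrap for the operator $\I-\beta$ are exactly the ingredients of \cite{gonzalezmonneau} (case $s=1/2$) and \cite{dpv,dfv} (general $s$). Likewise, differentiating the equation and running a nonlocal Hopf/comparison argument for $u'$ is what \cite{cs} does for~\eqref{u'infinity}. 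So your approach is correct and matches the literature the paper invokes.

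One small imprecision worth flagging: in the equation for $w$ the quadratic remainder from the Taylor expansion of $W'$ is $O\big((u-H)^2\big)=O\big((w-p)^2\big)$, not $O(w^2)$. This matters for the first step of the bootstrap, where you do not yet control $w$ but you do know $u-H\to 0$ (indeed $|u-H|\le C|x|^{-2s}$ from a preliminary comparison); once that rough rate is in hand the quadratic term is $O(|x|^{-4s})$ and your iteration proceeds as written. Also, the treatment of $\I[p]$ near the origin needs a little care since $p$ is singular there (not locally integrable when $s\ge 1/2$), so one works with $w$ directly or with a truncated $p$ rather than computing $\I[p]$ globally; you already anticipate this with your cutoff remark.
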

\begin{proof} The existence of a unique solution of \eqref{u} is proven in \cite{psv}, see also \cite{cs}. Estimate \eqref{uinfinitylem} is proven in \cite{gonzalezmonneau} for $s=\frac{1}{2}$ and in \cite{dpv}, \cite{dfv} respectively  for $s\in\left(\frac{1}{2},1\right)$ and  $s\in\left(0,\frac{1}{2}\right)$. Finally, estimate \eqref{u'infinity} is shown in \cite{cs}.
\end{proof}

Next, we introduce the function $\psi$ to be the solution of
\beq\label{psi}\begin{cases}
\I\psi-W''(u)\psi=u'+\eta(W''(u)-W''(0))&\text{in }\R\\
\psi(-\infty)=0=\psi(+\infty),
\end{cases}\eeq where $u$ is the solution of \eqref{u} and
\beq\label{eta}\eta:=\displaystyle\frac{1}{W''(0)}
\displaystyle\int_\R(u'(x))^2dx
=\displaystyle\frac{1}{\gamma \beta}.\eeq
For a detailed heuristic motivation of
equation \eqref{psi},
see Section 3.1 of \cite{gonzalezmonneau}.
For later purposes, we recall the following decay estimate
on the solution of~\eqref{psi}:

\begin{lem} Assume that  \eqref{Wass} holds, then there exists a unique solution $\psi$ to \eqref{psi}. Furthermore 
$\psi\in C^{1,\alpha}_{loc}(\R)\cap L^\infty(\R)$ for some $\al\in(0,1)$ and there exists $C>0$ such that for any $x\in\R$
\beq\label{psi'infty}|\psi'(x)|\le \frac{C}{1+|x|^{1+2s}}.\eeq
\end{lem}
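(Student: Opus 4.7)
The plan is to exploit the fact that the linearized operator $L\varphi := \I\varphi - W''(u)\varphi$ around the layer $u$ has a one-dimensional kernel spanned by $u'$ (differentiating \eqref{u} yields $Lu'=0$, and positivity plus the decay in Lemma \ref{uinfinitylem} identifies $u'$ as its generator in the class of bounded functions vanishing at $\pm\infty$). Consequently \eqref{psi} is a Fredholm-type problem and solvability requires the right-hand side $f:=u'+\eta(W''(u)-W''(0))$ to be orthogonal to $u'$. First I would verify that the constant $\eta$ from \eqref{eta} is chosen precisely to enforce this compatibility: pairing with $u'$ and integrating, the second term equals $\eta[W'(u)-W''(0)u]_{-\infty}^{+\infty} = -\eta W''(0)$ thanks to $W'(0)=W'(1)=0$ and $u(\pm\infty)\in\{0,1\}$, so the orthogonality condition reads $\int (u')^2\,dx = \eta W''(0)$, which is exactly \eqref{eta}.

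For existence, I would pass through an approximation: solve the coercive equation $L\psi^\mu - \mu\psi^\mu = f$ for $\mu>0$ (uniquely solvable in $L^\infty(\R)$ by standard perturbation/viscosity arguments) and send $\mu\to 0^+$. Uniform $L^\infty$ bounds independent of $\mu$ come from barrier arguments: since $W''(u(x))\to W''(0)>0$ as $|x|\to\infty$ by Lemma \ref{uinfinitylem} and \eqref{Wass}, the operator $L$ is coercive in the tails, and outside a large interval one can bound $f$ by a multiple of $W''(u)$ via an explicit supersolution; inside, interior estimates for the fractional Laplacian (\cite{s,dnpv}) yield local compactness. This produces a bounded solution vanishing at infinity, and $C^{1,\alpha}_{\mathrm{loc}}$ regularity follows from Schauder-type theory for $\I$ once $W''(u)$ is recognized as $C^{1,\alpha}$. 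Uniqueness holds modulo the kernel $\R u'$, or equivalently after fixing a normalization such as $\int \psi u'\,dx = 0$ built into the construction.

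The main obstacle is the pointwise decay estimate \eqref{psi'infty}. I would proceed in two steps. Step one: obtain a preliminary decay $|\psi(x)|\le C(1+|x|)^{-2s}$ by comparing $\psi$ with the barrier $\Phi(x):=C(1+|x|^2)^{-s}$; the right-hand side $f$ decays as $|x|^{-(1+2s)}$ at infinity, since \eqref{u'infinity} handles the $u'$ term while the Taylor expansion $W''(u)-W''(0)=W'''(0)(u-H)+O((u-H)^2)$ combined with \eqref{uinfinity} controls the second term. Meanwhile $\I\Phi$ can be computed asymptotically and decays faster than $\Phi$, so that at infinity the operator $L\Phi$ behaves like $-W''(0)\Phi$ up to lower-order corrections, and the comparison principle applies. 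Step two: differentiate the equation to obtain
\begin{equation*}
L\psi' = f' + W'''(u)u'\psi,
\end{equation*}
whose right-hand side now decays as $|x|^{-(1+2s)}$ by the preliminary bound on $\psi$ and estimate \eqref{u'infinity}; rerunning the barrier argument with the sharper test function $C(1+|x|)^{-(1+2s)}$ yields \eqref{psi'infty}. The hard technical point will be the asymptotic analysis of $\I$ acting on these polynomial barriers: the nonlocal tail contributions coming from the slowly-decaying transition region of $u$ must be controlled carefully, and the cross-term $W'''(u)u'\psi$ in the differentiated equation requires the preliminary decay to be strong enough for the bootstrap to close.
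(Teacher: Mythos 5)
The paper does not actually prove this lemma: its ``proof'' is a citation to \cite{gonzalezmonneau} (for $s=1/2$), \cite{dpv} and \cite{dfv} (for $s>1/2$ and $s<1/2$) for existence and uniqueness, and to \cite{pv} for \eqref{psi'infty}. Your outline is essentially a reconstruction of what those sources do: Fredholm alternative around the nondegenerate kernel $\R u'$ for existence, and comparison with polynomial barriers for the decay. Your verification that $\eta$ in \eqref{eta} is exactly the constant making the compatibility condition $\int f u'\,dx=0$ hold is correct and is the key structural point.

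Two places need repair before this is a proof. First, your claim that $f=u'+\eta(W''(u)-W''(0))$ decays like $|x|^{-(1+2s)}$ contradicts your own Taylor expansion: $W''(u)-W''(0)=W'''(0)(u-H)+O((u-H)^2)$ together with \eqref{uinfinity} gives only $O(|x|^{-2s})$ unless $W'''$ happens to vanish on $\Z$. This does not break Step one (the barrier $C(1+|x|^2)^{-s}$ only requires $f=O(|x|^{-2s})$, and $\psi=O(|x|^{-2s})$ is the correct, generically sharp rate), but the statement must be corrected, and note also that for $s\le 1/4$ this $f$ is not in $L^2(\R)$, so the Fredholm step must genuinely be run in the $L^\infty$/regularized setting you sketch rather than in $L^2$. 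Second, the barrier $C(1+|x|)^{-(1+2s)}$ in Step two sits exactly at the critical decay: since this profile is integrable, $\I\Phi(x)\approx \|\Phi\|_{L^1}\,|x|^{-(1+2s)}$ for large $|x|$, which is of the \emph{same} order as $W''(u)\Phi$, so the sign of $\I\Phi-W''(u)\Phi$ depends on the constants and the naive comparison can fail. The standard cure is to take $\Phi_R(x)=C(R+|x|)^{-(1+2s)}$ with $R$ large, so that $\|\Phi_R\|_{L^1}=O(C R^{-2s})$ is small compared with $W''(0)\,C$ and the supersolution inequality holds outside a compact set; you should write this out rather than only flag it. Finally, differentiating the equation to get $L\psi'=f'+W'''(u)u'\psi$ requires both $\psi\in C^{1,\beta}_{loc}$ with $1+\beta>2s$ (available by Schauder bootstrap, since $W\in C^{3,\alpha}$ and $u\in C^{2,\alpha}$) and a decay bound on $u''$, which is not contained in Lemma \ref{uinfinitylem} as stated and must be imported or proved.
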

\begin{proof}
The existence of a unique solution of  \eqref{psi} is proven in  \cite{gonzalezmonneau} for $s=\frac{1}{2}$  and  in \cite{dpv},  
\cite{dfv} respectively for $s\in\left(\frac{1}{2},1\right)$ and $s\in\left(0,\frac{1}{2}\right)$. Estimate \eqref{psi'infty} is shown in \cite{pv}.
\end{proof}


\section{Proof of Theorem \ref{mainthmbeforecoll}}\label{proofmainthmbeforecoll}
This section is devoted to the completion of the proof of
Theorem \ref{mainthmbeforecoll}. 
Some arguments presented will be valid also for the case of three
particles. Therefore, to make the arguments shorter,
we state these auxiliary results in the course of the proof
and we postpone their proof to Section \ref{thetaeppropsecproof}
(in that occasion, we will then prove in a single step the results
needed for both the cases of two and three particles).\medskip

The proof of
Theorem \ref{mainthmbeforecoll}
is based on the construction of auxiliary barriers for
the dislocation function and in a careful use of the maximum principle.
Roughly speaking,
when we are close to the collision time, we can take
a transition layer that goes ``upwards'' (respectively,
``downwards'') and place it a bit to the left (respectively,
right) with respect to the collision point, and use them
as barriers to control the original behavior of the dislocation function.

Of course, to make this argument rigorous, one has to control the small
errors produced by the fact that the particle dynamics
is only an approximation of the motion of the level sets
of the dislocation function, and by all possible error
terms that a nonlocal equation could, in principle, propagate.

Thus, to complete the proof of Theorem \ref{mainthmbeforecoll}, as firstly seen in \cite{gonzalezmonneau, dpv,dfv},
we consider an auxiliary small parameter $\delta>0$ and define $(\xs_1(t),\xs_{2}(t))$ to be the solution of the system 
\beq\label{dynamicalsysbar2.0}\begin{cases} \dot{\xs}_1
=\gamma\left(\displaystyle-\frac{\xs_1-\xs_2}{2s |\xs_1-\xs_2|^{2s+1}}-\sigma(t,\xs_1)-\delta\right)&\text{in }(0,T_c^\delta)\\
\dot{\xs}_2
=\gamma\left(\displaystyle-\frac{\xs_2-\xs_1}{2s |\xs_2-\xs_1|^{2s+1}}+\sigma(t,\xs_2)+\delta\right)&\text{in }(0,T_c^\delta)\\
 \xs_1(0)=x_1^0-\delta,\,\xs_2(0)=x_2^0+\delta
\end{cases}\eeq
where $T_c^\delta$ is the collision time of the perturbed system \eqref{dynamicalsysbar2.0},
see Figure~3. 
\bigskip\bigskip

\begin{center}
\includegraphics[width=0.95\textwidth]{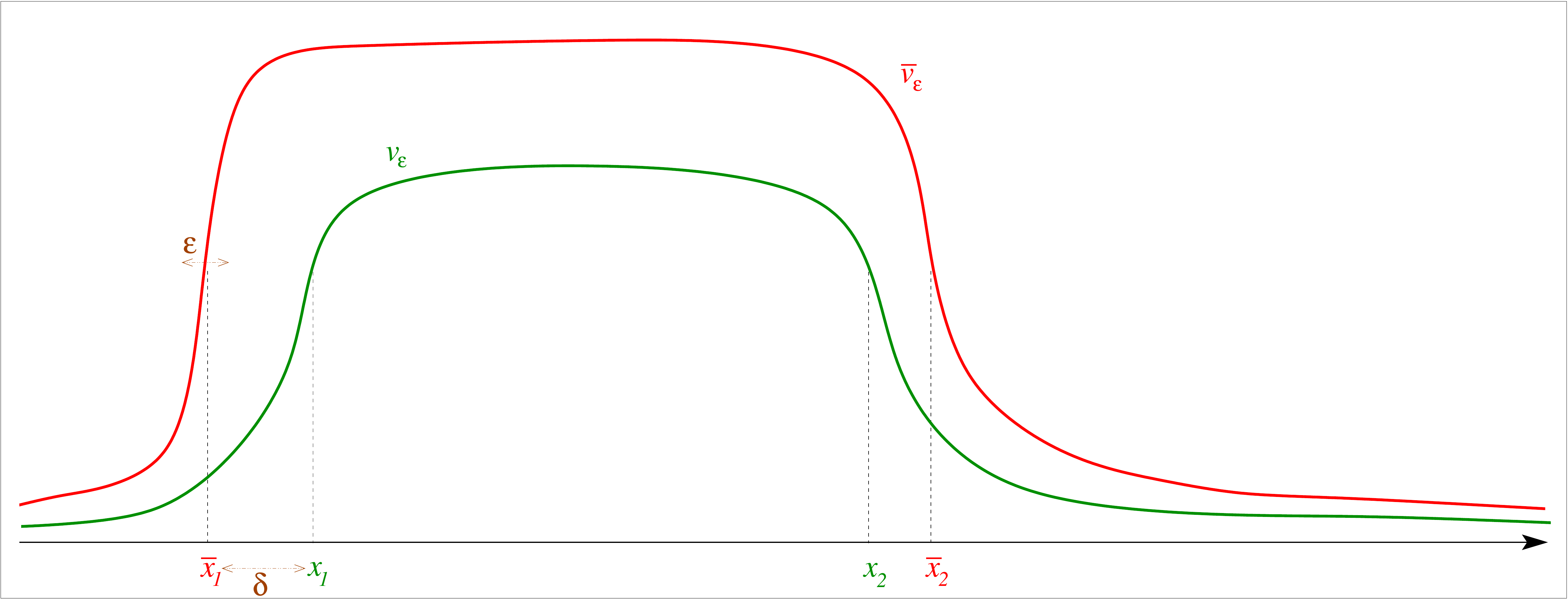}
{\footnotesize{\\
$\,$\\
Figure 3: The geometry involved in system \eqref{dynamicalsysbar2.0}.}}
\end{center}
\bigskip\bigskip

Since $\xs_1(t)<\xs_2(t)$ for any $t\in[0,T_c^\delta)$, system \eqref{dynamicalsysbar2.0} can be rewritten in the following way
\beq\label{dynamicalsysbar2}\begin{cases} \dot{\xs}_1
=\gamma\left(\displaystyle\frac{1}{2s (\xs_2-\xs_1)^{2s}}-\sigma(t,\xs_1)-\delta\right)&\text{in }(0,T_c^\delta)\\
\dot{\xs}_2
=\gamma\left(-\displaystyle\frac{1}{2s (\xs_2-\xs_1)^{2s}}+\sigma(t,\xs_2)+\delta\right)&\text{in }(0,T_c^\delta)\\
 \xs_1(0)=x_1^0-\delta,\,\xs_2(0)=x_2^0+\delta.
\end{cases}\eeq
Roughly speaking, the intention of
this $\delta$-perturbation is to place
the particle~$\xs_1$ ``slightly to the left''
with respect to the original particle~$x_1$,
and the particle~$\xs_2$ ``slightly to the right''
with respect to the original particle~$x_2$.
This slight modification will allow to center
some auxiliary transition layers in~$\xs_1$
and~$\xs_2$ and use them as barriers
(as a matter of fact, this technique requires
a small additional adjustment via the corrector~$\psi$
introduced in~\eqref{psi}, so the reader has to wait
till formula~\eqref{vepansbar2} for the rigorous introduction
of the correct
barrier). The role of the additional $\delta$-perturbation
is, in a sense, to ``desingularize'' the problem at the collision
time: that is, while the original problem experiences a collision
at time~$T_c$, the perturbed problem is still nonsingular
and it can provide two-side bounds on the original dislocation function.
\medskip

In order to measure the distance between the
perturbed particles~$\xs_1$
and~$\xs_2$, we also denote
\beq\label{thetabar}\overline{\theta}(t):=\xs_2(t)-\xs_1(t)\eeq and 
$$\theta_0:=x_2^0-x_1^0>0,$$
then $\overline{\theta}$ is solution of
\beq\label{thetabareq} \begin{cases} \dot{\overline{\theta}}=\gamma\left(\displaystyle-\frac{1}{s\overline{\theta}^{2s}}+\sigma(t,\xs_1)+\sigma(t,\xs_2)+2\delta\right)&\text{in }(0,T_c^\delta)\\
\overline{\theta}(0)=\theta_0+2\delta.
\end{cases}\eeq
Remark that 
\beqs \overline{\theta}(t)>0\quad \text{for any }t\in[0,T_c^\delta)\eeqs and 
\beqs  \overline{\theta}(T_c^\delta)=0.\eeqs
Now we show that the error due to the $\delta$-perturbation
is small if so is~$\delta$:

\begin{prop}\label{tclimprop2}
Let $(x_1,x_2)$ and $(\xs_1,\xs_2)$ be the solution respectively to system \eqref{dynamicalsysNintro} and \eqref{dynamicalsysbar2}. 
Let   $T_c<+\infty$ and $T_c^\delta$  be the collision time  respectively of  \eqref{dynamicalsysNintro} and \eqref{dynamicalsysbar2}. Then we have 
\beq\label{Tcdeltalim2}\lim_{\delta\to 0}T_c^\delta=T_c,\eeq and for $i=1,2$
\beq\label{xideltalim2}\lim _{\delta\to 0}\xs_i(t)=x_i(t)\quad\text{for any }t\in[0,T_c).\eeq
\end{prop}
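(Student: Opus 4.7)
My plan is to reduce Proposition \ref{tclimprop2} to two ingredients: standard continuous dependence of ODEs on parameters away from the collision, and a quantitative blow-up estimate for the relative distance $\theta(t):=x_2(t)-x_1(t)$ at $T_c$. The spatial statement \eqref{xideltalim2} concerns times strictly before the collision and will follow from the first ingredient, whereas \eqref{Tcdeltalim2} additionally needs the second.

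First I would fix an arbitrary $T<T_c$ and observe that, by definition of $T_c$, one has $\theta\geq c(T)>0$ on $[0,T]$. On the open half-space $\{x_2>x_1\}$ the vector field in \eqref{dynamicalsysNintro} is smooth in space and Lipschitz in time (by \eqref{sigmaassump}), while the perturbed field in \eqref{dynamicalsysbar2} differs from it by $O(\delta)$ and the initial conditions differ by $\delta$. A Gronwall argument then yields $\xs_i\to x_i$ uniformly on $[0,T]$ as $\delta\to 0$, which is exactly \eqref{xideltalim2}. As an immediate consequence, $\overline\theta(T)\to\theta(T)>0$, so $\overline\theta>0$ on $[0,T]$ for $\delta$ small, giving $T_c^\delta>T$ and hence $\liminf_{\delta\to 0}T_c^\delta\geq T_c$ by letting $T\uparrow T_c$.

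The hard part will be the matching upper bound $\limsup_{\delta\to 0}T_c^\delta\leq T_c$, because continuous dependence breaks down at $t=T_c$. My plan here is to exploit the blow-up of the singular term in \eqref{thetabareq}: once $\overline\theta$ is below an explicit, $\delta$-uniform threshold, the bounded contribution $\sigma(t,\xs_1)+\sigma(t,\xs_2)+2\delta$ is dominated by $\gamma/(s\overline\theta^{2s})$, yielding
\begin{equation*}
\frac{d}{dt}\overline\theta^{2s+1}=(2s+1)\overline\theta^{2s}\dot{\overline\theta}\leq -\frac{(2s+1)\gamma}{2s},
\end{equation*}
together with a matching one-sided estimate $\frac{d}{dt}\theta^{2s+1}\geq -C$ for the unperturbed $\theta$. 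Integrating these from $T_c-\eta$ up to the respective collision times, where both quantities vanish, gives $\theta(T_c-\eta)^{2s+1}=O(\eta)$ and $T_c^\delta-(T_c-\eta)\leq C\,\overline\theta(T_c-\eta)^{2s+1}$. Applying \eqref{xideltalim2} at the fixed instant $T_c-\eta$ then yields $\overline\theta(T_c-\eta)\to\theta(T_c-\eta)$ as $\delta\to 0$, so $\limsup_{\delta\to 0}(T_c^\delta-T_c)\leq -\eta+C'\eta$ for every small $\eta>0$; sending $\eta\to 0^+$ closes \eqref{Tcdeltalim2}.

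The essential obstacle is precisely the singularity of the ODE at the collision manifold $\{x_1=x_2\}$, which forbids the naive extension of continuous dependence across $T_c$. The $\delta$-perturbation in \eqref{dynamicalsysbar2} is designed so that $\overline\theta$ remains in the regime where the universal blow-up $\overline\theta^{2s+1}\leq O(T_c^\delta-t)$ is available; once this one-sided law is in hand, the convergence of the collision times is reduced to the convergence of $\overline\theta$ at a fixed time slightly before $T_c$, which is already controlled by the first step.
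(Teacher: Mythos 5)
Your proposal is correct, but for the key convergence \eqref{Tcdeltalim2} it takes a genuinely different route from the paper. The paper argues by contradiction and compactness: if $T_c^{\delta_k}\to T_c+2a$ with $a>0$, the perturbed trajectories stay uniformly separated on $[0,T_c+a]$, hence converge (by continuous dependence on the parameter) to a solution of the unperturbed system that remains separated past $T_c$; uniqueness of the unperturbed flow on the region where the field is Lipschitz then contradicts the collision at $T_c$. You instead prove the two inequalities directly: the easy bound $\liminf_\delta T_c^\delta\ge T_c$ via Gronwall on $[0,T]$, $T<T_c$ (which is also how both you and the paper obtain \eqref{xideltalim2}), and the hard bound $\limsup_\delta T_c^\delta\le T_c$ via the one-sided blow-up law $\frac{d}{dt}\overline\theta^{\,2s+1}\le -(2s+1)\gamma/(2s)$ below a $\delta$-uniform threshold, combined with $\theta(T_c-\eta)^{2s+1}=O(\eta)$ and the convergence $\overline\theta(T_c-\eta)\to\theta(T_c-\eta)$ at the fixed time $T_c-\eta$. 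Your computation is sound (the threshold argument guarantees $\overline\theta$ stays below the threshold once it enters, so the integration up to $T_c^\delta$ is legitimate, and it even shows $T_c^\delta<+\infty$ a posteriori); the only points to spell out are the standard continuation argument ensuring the perturbed trajectory remains in the Lipschitz region throughout $[0,T]$ before Gronwall applies, and the choice of $\eta$ small enough that $\theta(T_c-\eta)$ lies strictly below the threshold. What each approach buys: the paper's soft argument transfers verbatim to the three-particle system \eqref{dynamicalsysbar3} (the two propositions are proved simultaneously), whereas your argument is quantitative and yields an explicit modulus for $T_c^\delta-T_c$; it is also very much in the spirit of the paper's own later use of the variable $\upsilon=\overline\theta^{\,2s+1}$ in Proposition \ref{holderthetaprop} and of the stationary threshold in Lemma \ref{teplem}, so in a sense you have anticipated the H\"older estimate that the paper proves separately.
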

The proof of Proposition \ref{tclimprop2} is
postponed to Section~\ref{thetaeppropsecproof}.

Next result is a technical observation
about the H\"older regularity of
a function. Namely, to prove that a function is H\"older
continuous, it is enough to check that a power of the function
is Lipschitz continuous.

\begin{lem}\label{lemmaholder}
Let~$\beta\in (1,+\infty)$,
$\Omega$ be an open subset of~$\R^n$ and~$f:\Omega\rightarrow[0,+\infty)$.
Let~$\alpha:=1/\beta$ and~$g:=f^\beta$. Assume that~$g$ is
Lipschitz continuous in~$\Omega$. Then~$f\in C^\alpha(\Omega)$.
\end{lem}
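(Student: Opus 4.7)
The plan is to reduce the Hölder continuity of $f$ to the Lipschitz continuity of $g = f^\beta$ via an elementary pointwise inequality between powers of nonnegative reals. The core step is the following claim:
\begin{equation*}
|a-b|^\beta \leq |a^\beta - b^\beta| \qquad \text{for every } a,b \geq 0 \text{ and every } \beta \geq 1.
\end{equation*}
Once this is available, the conclusion is immediate. I would apply the claim to $a := f(x)$ and $b := f(y)$ (both nonnegative by hypothesis) to obtain $|f(x)-f(y)|^\beta \leq |g(x)-g(y)|$. The Lipschitz continuity of $g$ with some constant $L$ then gives $|f(x)-f(y)|^\beta \leq L |x-y|$, and taking the $\alpha$-th power (with $\alpha = 1/\beta$) yields $|f(x)-f(y)| \leq L^\alpha |x-y|^\alpha$, which is the desired Hölder bound on $f$.

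To establish the pointwise claim, I would assume without loss of generality that $a \geq b \geq 0$ and set $h(t) := (t-b)^\beta - (t^\beta - b^\beta)$ for $t \geq b$. Then $h(b)=0$ and
\begin{equation*}
h'(t) = \beta\bigl[(t-b)^{\beta-1} - t^{\beta-1}\bigr] \leq 0,
\end{equation*}
since $0 \leq t - b \leq t$ and $\beta - 1 \geq 0$. Hence $h(t) \leq 0$ for all $t \geq b$, and evaluating at $t=a$ gives $(a-b)^\beta \leq a^\beta - b^\beta$, which is the claim after taking absolute values. (Alternatively, one can use the subadditivity of $x \mapsto x^{1/\beta}$ on $[0,\infty)$, writing $a^\beta = ((a^\beta - b^\beta)+b^\beta)$ and applying $((u+v)^{1/\beta}) \leq u^{1/\beta} + v^{1/\beta}$ with $u=a^\beta-b^\beta$ and $v=b^\beta$ to deduce $a - b \leq (a^\beta - b^\beta)^{1/\beta}$.)

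There is no real obstacle here: the argument is purely pointwise, uses no structure of $\Omega$ other than that it is the common domain of $f$ and $g$, and does not require any regularity of $f$ beyond nonnegativity. The only point worth being careful about is the sign hypothesis $f \geq 0$, which is essential, since otherwise the inequality $|a-b|^\beta \leq |a^\beta-b^\beta|$ fails (consider $a=1$, $b=-1$, $\beta=2$). The Lipschitz constant for $f^\beta$ translates into a Hölder constant $L^{1/\beta}$ for $f$, so the exponent $\alpha = 1/\beta$ is indeed optimal in this argument.
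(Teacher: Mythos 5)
Your proof is correct, and it takes a genuinely more direct route than the paper's. The paper introduces the auxiliary function $h(t):=t^{-1}\big((1+t)^\alpha-1\big)^\beta$, proves only that $S:=\sup_{t>0}h(t)$ is \emph{finite} by computing the limits at $0^+$ and $+\infty$, and then, after splitting off the case $f(y)=0$, writes $|f(x)-f(y)|^\beta=(g(x)-g(y))\,h(t)$ with $t=(g(x)-g(y))/g(y)$ to get the Hölder constant $(SL)^\alpha$. You instead prove the pointwise inequality $(a-b)^\beta\le a^\beta-b^\beta$ for $a\ge b\ge0$ (your derivative argument is sound, as is the alternative via subadditivity of $x\mapsto x^{1/\beta}$), which needs no case distinction at $f(y)=0$ and no compactness-type bound on an auxiliary function. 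The two arguments are secretly the same estimate: the bound $h(t)\le 1$ is equivalent to $(1+t)^\alpha\le 1+t^\alpha$, i.e.\ to the subadditivity you invoke, so in fact $S=1$; your approach proves this sharp form directly and therefore yields the cleaner constant $L^{1/\beta}$, whereas the paper settles for an unquantified $S\ge1$. The only cosmetic omission is the remark (present in the paper) that boundedness of $f$ follows from that of $g$ when one wants the full $C^\alpha$ norm rather than just the seminorm; this is immaterial for the application, where $\Omega$ is a bounded interval.
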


\begin{proof} For any~$t>0$, we set
$$ h(t):= t^{-1}\Big( (1+t)^\alpha-1\Big)^\beta.$$
We observe that~$(1+t)^\alpha = 1 +\alpha t+O(t^2)$
for small~$t$, therefore~$h(t)=t^{-1} (\alpha t+O(t^2))^\beta
=\alpha^\beta t^{\beta-1} (1+O(t))^\beta$ for small~$t$ and so,
since~$\beta>1$,
$$ \lim_{t\rightarrow0^+} h(t)=0.$$
Also,
$$ \lim_{t\rightarrow+\infty} h(t)= \lim_{t\rightarrow+\infty}
\Big( (t^{-1}+1)^\alpha-t^{-\alpha}\Big)^\beta=1.$$
Accordingly, we have that
$$ S:= \sup_{t>0} h(t) \in [1,+\infty).$$
Now we show that~$f\in C^\alpha(\Omega)$. Since~$g$ is bounded,
so is~$f$, thus we only need to control the H\"older seminorm of~$f$.
For this, if the Lipschitz seminorm of~$g$ is bounded by~$L$,
we claim that, for every~$x$, $y\in\Omega$,
\begin{equation}\label{CL}
|f(x)-f(y)|\le (SL)^\alpha\,|x-y|^\alpha.
\end{equation}
To prove~\eqref{CL}, we fix~$x$, $y\in\Omega$
and we suppose, without loss of generality, that~$f(x)\ge f(y)$.
In addition, if~$f(y)=0$, we have that also~$g(y)=0$ and then
\begin{eqnarray*}
|f(x)-f(y)| = f(x)=\big( g(x)\big)^\alpha=|g(x)-g(y)|^\alpha
\le L^\alpha |x-y|^\alpha,
\end{eqnarray*}
which implies~\eqref{CL} in this case. As a consequence,
we can also suppose that~$f(y)>0$. Then also~$g(y)>0$
and we can define
$$ t:=\frac{g(x)-g(y)}{g(y)}.$$
By construction $t\ge0$ and
$$ (1+t)^\alpha =
\left(\frac{g(x)}{g(y)}\right)^\alpha=\frac{f(x)}{f(y)}.$$
Accordingly,
\begin{eqnarray*}
&& |f(x)-f(y)|^\beta=\big(f(y)\big)^\beta\,
\left( \frac{f(x)}{f(y)} -1\right)^\beta=
g(y) \,\left( (1+t)^\alpha-1\right)^\beta\\
&&\qquad= g(y) \,t\,h(t)=\big( g(x)-g(y)\big)\,h(t)\le
SL\,|x-y|,
\end{eqnarray*}
which implies~\eqref{CL}.
\end{proof}

Now we exploit Lemma~\ref{lemmaholder} to obtain
the H\"older continuity of the function~$\overline{\theta}$ which was
introduced in~\eqref{thetabar}.

\begin{prop}\label{holderthetaprop} Let  $(\xs_1,\xs_2)$ be the solution to  system  \eqref{dynamicalsysbar2}. Then, for any $0\leq\delta\leq 1$ the function $\overline{\theta}$ defined by \eqref{thetabar} is H\"{o}lder continuous in $[0,T_c^\delta]$ with H\"{o}lder constant uniform in $\delta$.
\end{prop}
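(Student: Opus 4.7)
The plan is to reduce the problem to Lemma \ref{lemmaholder} applied to the power $g := \overline{\theta}^{2s+1}$, with the choice $\beta = 2s+1 > 1$ and $\alpha = 1/(2s+1)$. The key observation is an algebraic cancellation: although the ODE \eqref{thetabareq} for $\overline{\theta}$ carries the singular term $-\gamma/(s\,\overline{\theta}^{2s})$ which blows up as $t\to T_c^\delta$, multiplying by the pre-factor $\overline{\theta}^{2s}$ annihilates it. Explicitly, the chain rule together with \eqref{thetabareq} gives
$$\dot{g}(t) \;=\; (2s+1)\,\overline{\theta}^{2s}(t)\,\dot{\overline{\theta}}(t) \;=\; (2s+1)\gamma\Big(-\tfrac{1}{s}+\overline{\theta}^{2s}(t)\big(\sigma(t,\xs_1(t))+\sigma(t,\xs_2(t))+2\delta\big)\Big),$$
which is a smooth, nonsingular expression on the whole of $[0,T_c^\delta]$, even at the collision time.

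The next step is to bound the right-hand side uniformly in $\delta\in[0,1]$. The bracketed quantity involving $\sigma$ is immediately controlled by $2\|\sigma\|_{L^\infty}+2$ thanks to \eqref{sigmaassump}, so everything reduces to a uniform upper bound on $\overline{\theta}$. Discarding the non-positive singular term in \eqref{thetabareq} yields $\dot{\overline{\theta}}\leq\gamma(2\|\sigma\|_{L^\infty}+2)$, hence $\overline{\theta}(t)\leq\theta_0+2+C\,T_c^\delta$ on $[0,T_c^\delta]$. To close the estimate I would invoke Proposition \ref{tclimprop2} together with the continuous dependence of the perturbed system \eqref{dynamicalsysbar2} on the parameter $\delta$ to conclude that $\sup_{\delta\in[0,1]} T_c^\delta <+\infty$; this then gives $|\dot{g}|\leq L$ for some constant $L$ independent of $\delta$.

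With $g$ Lipschitz on $[0,T_c^\delta]$ with Lipschitz constant $L$ uniform in $\delta$, Lemma \ref{lemmaholder} applied to $f=\overline{\theta}$ and $g=f^{2s+1}$ directly yields $\overline{\theta}\in C^{1/(2s+1)}([0,T_c^\delta])$ with H\"older seminorm at most $(SL)^{1/(2s+1)}$, where $S$ is the universal constant from that lemma. The conceptual heart of the argument is the cancellation that makes $g$ smooth across the collision; the technical obstacle to be handled carefully is the uniform-in-$\delta$ control of $T_c^\delta$, but once this continuous-dependence fact is in hand, the H\"older estimate is an immediate consequence of the pre-established Lemma \ref{lemmaholder}.
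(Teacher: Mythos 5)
Your proposal is correct and follows essentially the same route as the paper: you pass to $\upsilon=\overline{\theta}^{\,2s+1}$, observe that the factor $\overline{\theta}^{2s}$ cancels the singular term in \eqref{thetabareq} so that $\dot{\upsilon}$ is bounded uniformly in $\delta$ (using the uniform bound on $T_c^\delta$ from Proposition \ref{tclimprop2} and the resulting uniform bound on $\overline{\theta}$), and then conclude via Lemma \ref{lemmaholder}. This matches the paper's argument step for step.
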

\begin{proof}
First remark that $\overline{\theta}$ is uniformly bounded in $[0,T_c^\delta]$. Indeed, by \eqref{Tcdeltalim2} there exists $T>0$ independent of $\delta$ such that $T_c^\delta\leq T$.
Then from \eqref{thetabareq} we infer that for any $t\in[0,T_c^\delta]$
\beq\label{thetabarboundeholdlem}0\leq \overline{\theta}(t)\leq \theta_0+2\delta+2\gamma( \|\sigma\|_\infty+\delta)T_c^\delta\leq \theta_0+2+2\gamma( \|\sigma\|_\infty+1)T.\eeq
Next, again from \eqref{thetabareq} we see that  the function $$\upsilon:=(\overline{\theta})^{2s+1}$$ is solution of 
\beqs \dot{\upsilon}=\frac{\gamma(2s+1)}{s}[-1+(\sigma(t,\xs_1)+\sigma(t,\xs_2)+2\delta)s\overline{\theta}^{2s}]\quad\text{in }(0,T_c^\delta).\eeqs
Using  that $\sigma$ is bounded and  \eqref{thetabarboundeholdlem}, we get
\beqs |\dot{\upsilon}|\leq C,\eeqs where $C$ does not depend on $\delta$. 
Therefore $\upsilon$ is Lipschitz continuous  in $[0,T_c^\delta]$ uniformly in $\delta$. The conclusion of the proposition then follows from  Lemma \ref{lemmaholder}.
\end{proof}

\noindent Next,  we set 
\beq\label{xbarpunto2}\cs_i(t):= \dot{\xs}_i(t),\quad i=1,2\eeq
and
$$ \overline{\sigma}:=\displaystyle\frac{\sigma+\delta}{W''(0)}.$$

\noindent Let $u$ and $\psi$ be  respectively the solution of \eqref{u} and \eqref{psi}. We define
\beq\label{vepansbar2}\begin{split}\vs_\ep(t,x)&:=\ep^{2s}\overline{\sigma}(t,x)+ u\left(\displaystyle\frac{x-\xs_1(t)}{\ep}\right)+ u\left(\displaystyle\frac{\xs_2(t)-x}{\ep}\right)-1\\&
-\ep^{2s}\cs_1(t)\psi\left(\displaystyle\frac{x-\xs_1(t)}{\ep}\right)+\ep^{2s}\cs_2(t)\psi\left(\displaystyle\frac{\xs_2(t)-x}{\ep}\right).\end{split}
\eeq
 
The next two results show that, choosing conveniently $\delta=\delta_\ep$ in \eqref{dynamicalsysbar2},  the function $\vs_\ep$, defined in \eqref{vepansbar2}, is a supersolution of \eqref{vepeq} provided that $\xs_1$ and $\xs_2$ are far enough.

\begin{prop}\label{thetaeprop}  There exist $\ep_0>0$ and $\theta_\ep,\,\delta_\ep>0$ with 
\beqs \theta_\ep,\,\delta_\ep,\,\ep\theta_\ep^{-1}=o(1)\quad\text{as }\ep\to0\eeqs
such that for any $\ep<\ep_0$, if $(\xs_1,\xs_2)$ is a solution of the ODE
system  \eqref{dynamicalsysbar2} with $\delta\ge\delta_\ep$, then the  function $\vs_\ep$ defined in \eqref{vepansbar2} satisfies
$$\ep(\vs_\ep)_t-\I \vs_\ep+\displaystyle\frac{1}{\ep^{2s}}W'(\vs_\ep)-\sigma\geq 0$$
for any $x\in\R$ and  any $t\in(0,T_c^\delta)$ such that $\xs_2(t)-\xs_1(t)\geq \theta_\ep$.
\end{prop}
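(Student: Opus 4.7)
\textbf{Proof plan for Proposition \ref{thetaeprop}.} The plan is to insert the ansatz $\vs_\ep$ into the operator $L\vs_\ep := \ep(\vs_\ep)_t - \I\vs_\ep + \ep^{-2s}W'(\vs_\ep) - \sigma$ and verify that, after careful cancellations, the residual is non-negative. Introducing the rescaled variables $\xi_1 := (x-\xs_1)/\ep$ and $\xi_2 := (\xs_2-x)/\ep$, a direct computation gives
\begin{equation*}
\ep(\vs_\ep)_t = -\cs_1 u'(\xi_1) + \cs_2 u'(\xi_2) + \ep^{2s}\cs_1^2\psi'(\xi_1) + \ep^{2s}\cs_2^2\psi'(\xi_2) + \ep^{2s+1}\bar\sigma_t - \ep^{2s+1}\dot\cs_1 \psi(\xi_1) + \ep^{2s+1}\dot\cs_2\psi(\xi_2),
\end{equation*}
while $\I\vs_\ep$ is computed using the scaling of the fractional Laplacian together with the layer equation $\I u = W'(u)$ and the corrector equation \eqref{psi}. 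After substitution, the singular terms $\ep^{-2s}W'(u(\xi_i))$ and the transport terms $-\cs_i u'(\xi_i)$ drop out identically, by the very definition of $u$ and $\psi$.

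Next, I would Taylor-expand the nonlinearity. Writing $\vs_\ep = u_1 + u_2 - 1 + w_\ep$ with $w_\ep = \ep^{2s}\bar\sigma - \ep^{2s}\cs_1\Psi_1 + \ep^{2s}\cs_2\Psi_2 = O(\ep^{2s})$, one obtains
\begin{equation*}
\frac{1}{\ep^{2s}}W'(\vs_\ep) = \frac{1}{\ep^{2s}}\bigl[W'(u_1+u_2-1)-W'(u_1)-W'(u_2)\bigr] + W''(u_1+u_2-1)\bigl(\bar\sigma - \cs_1\Psi_1 + \cs_2\Psi_2\bigr) + O(\ep^{2s}),
\end{equation*}
where I have also added back the contributions $\ep^{-2s}W'(u_i)$ that came from $-\I U_i$. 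To handle the "three-layer" term, split the real line into the two regions where $x$ is closer to $\xs_1$ or to $\xs_2$. In the first region, periodicity of $W$ gives $W'(u_2)=W'(u_2-1)$ with $u_2-1$ small; applying Lemma \ref{uinfinitylem} yields
$u_2-1 = -\frac{\ep^{2s}}{2sW''(0)\bar\theta^{2s}} + \text{lower order}$,
and a second-order Taylor expansion of $W'$ isolates the Peach--Koehler interaction: modulo remainders,
\begin{equation*}
\frac{1}{\ep^{2s}}\bigl[W'(u_1+u_2-1)-W'(u_1)-W'(u_2)\bigr] = -\frac{W''(u_1)-W''(0)}{2s W''(0)\,\bar\theta^{2s}} + \text{similar near }\xs_2.
\end{equation*}

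At this stage the dominant surviving terms at the two layers are
$u'(\xi_1)\bigl[\cs_1 - \tfrac{\gamma}{2s\bar\theta^{2s}} + \gamma\sigma(t,\xs_1)\bigr]$
and its analogue at $\xs_2$, where I have used $\eta=(\gamma\beta)^{-1}$, $\beta=W''(0)$, and the fact that $\psi$ and $W''(u)-W''(0)$ both decay outside the layers so that cross-terms $\cs_1$ at the $\xs_2$-layer are negligible. By the choice of $(\cs_1,\cs_2)$ in \eqref{dynamicalsysbar2}, the bracketed factors equal exactly $-\gamma\delta$ and $-\gamma\delta$ respectively. Consequently, the principal residual of $L\vs_\ep$ equals
\begin{equation*}
\gamma\delta\bigl[u'(\xi_1)+u'(\xi_2)\bigr] + R_\ep(t,x),
\end{equation*}
which is strictly positive, and $R_\ep$ collects errors of three types: $(i)$ $O(\ep)$-errors from $\bar\sigma$, $\bar\sigma_t$, $\I\bar\sigma$, $\dot\cs_i\psi(\xi_i)$ and the quadratic Taylor remainder of $W'$; $(ii)$ errors of size $O(\ep^{2s}/\bar\theta^{2(2s+\sigma)})$ from the subleading term in \eqref{uinfinity}, exploiting $\kappa>2s$; $(iii)$ cross-terms like $\psi(\xi_i)(W''(u_j)-W''(0))$ with $j\ne i$, which decay polynomially in $\bar\theta/\ep$. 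Since $u'(\xi_i)\ge c/(1+|\xi_i|^{1+2s})\ge c\ep^{1+2s}/\bar\theta^{1+2s}$ in the relevant region, one obtains $|R_\ep|\le\omega(\ep,\theta_\ep)\cdot\gamma[u'(\xi_1)+u'(\xi_2)]$ where $\omega(\ep,\theta_\ep)\to 0$ provided $\ep/\theta_\ep\to 0$. Choosing $\theta_\ep\to 0$ slowly (for instance $\theta_\ep=\ep^{1/(2s+2)}$) and setting $\delta_\ep := 2\omega(\ep,\theta_\ep)/\gamma$, the residual is non-negative for all $\delta\ge\delta_\ep$ and $\bar\theta\ge\theta_\ep$.

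The main obstacle is controlling the "three-layer" nonlinear term $W'(u_1+u_2-1)-W'(u_1)-W'(u_2)$: its naive size is $O(1)$, and one must simultaneously extract its dominant asymptotic (which miraculously matches the Peach--Koehler velocities defining \eqref{dynamicalsysbar2}) and show that the subleading remainder is $o(\bar\theta^{-2s})$. This uses the sharper decay $\kappa>2s$ in \eqref{uinfinity} and a matched asymptotic analysis on the two sides of the midpoint $(\xs_1+\xs_2)/2$. The secondary technical point is that the $\delta$-perturbation must beat \emph{all} remainders uniformly on $\{\bar\theta\ge\theta_\ep\}$, which dictates the rate at which $\delta_\ep$ and $\theta_\ep$ are allowed to vanish.
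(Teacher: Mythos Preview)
Your overall skeleton---insert the ansatz, use the layer equation for $u$ and the corrector equation for $\psi$ to cancel the transport terms, then Taylor expand $W'$ and match the Peach--Koehler force with the ODE---is the right one, and it is essentially what the paper does. However, the mechanism you propose for extracting positivity is not the one that works, and as stated it contains a genuine gap.

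First, there is an internal inconsistency: you correctly note that the $-\cs_i u'(\xi_i)$ transport terms cancel \emph{identically} against the $u'$ produced by $\I(\ep^{2s}\cs_i\psi_i)$ via \eqref{psi}; yet a few lines later you claim that the ``dominant surviving terms'' are $u'(\xi_1)[\cs_1-\tfrac{\gamma}{2s\bar\theta^{2s}}+\gamma\sigma(t,\xs_1)]$. After the $\psi$-cancellation there is no such $u'[\cdot]$ term left. In the paper's bookkeeping, the matching with the ODE happens instead inside the factor
\[
\big(W''(\tilde u_i)-W''(0)\big)\Big(\ep^{-2s}\sum_{j\ne i}\tilde u_j+\overline\sigma+\zeta_i\cs_i\eta\Big),
\]
where the bracket becomes $[\sigma(t,x)-\sigma(t,\xs_i)]/W''(0)=O(|x-\xs_i|)$ near $\xs_i$, and the prefactor is $O(\tilde u_i)$; this term is therefore $o(1)$, not a source of positivity.

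Second, and this is the real issue, the positive term is \emph{not} $\gamma\delta\,[u'(\xi_1)+u'(\xi_2)]$ but the bare constant $\delta$, coming from $W''(0)\overline\sigma-\sigma=\delta$ (recall $\overline\sigma=(\sigma+\delta)/W''(0)$). The paper shows that the full residual satisfies $I_\ep=\delta+o(1)$ uniformly in $x$ as long as $\bar\theta\ge\theta_\ep$, and then chooses $\delta_\ep$ to dominate the $o(1)$. Your alternative bound $|R_\ep|\le \omega(\ep,\theta_\ep)\,[u'(\xi_1)+u'(\xi_2)]$ cannot hold: several error terms, for instance $\ep^{2s+1}\overline\sigma_t$, $\ep^{2s}\I\overline\sigma$, the quadratic Taylor remainder $O(\ep^{2s})$, and $O(\ep^{2s}\cs_j^2)$, are uniform in $x$ and do not decay at infinity, whereas $u'(\xi_i)\sim \ep^{1+2s}|x-\xs_i|^{-1-2s}$. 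Far from both layers the ratio $R_\ep/[u'(\xi_1)+u'(\xi_2)]$ blows up, so a localized positive term of size $\gamma\delta\,u'$ cannot absorb these errors. The fix is simply to keep track of the constant $\delta$ produced by the $\overline\sigma$ shift and show that every other contribution is $o(1)$ uniformly; no comparison with $u'$ is needed.
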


\begin{lem}\label{initialcondprop2} Let $v_\ep^0(x)$ be defined by \eqref{vep0}. Then there exists $\ep_0>0$ such that for any $\ep<\ep_0$ and $\delta_\ep$ given by Proposition \ref{thetaeprop}, if $(\xs_1,\xs_2)$ is the solution to  system  \eqref{dynamicalsysbar2} with $\delta=\delta_\ep$, then  
the function $\vs_\ep$ defined in \eqref{vepansbar2} satisfies
$$v_\ep^0(x)\le \vs_\ep(0,x)\quad\text{for any }x\in\R.$$
\end{lem}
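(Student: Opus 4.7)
The plan is to expand $\vs_\ep(0,x)-v_\ep^0(x)$ explicitly, recognize a positive constant and two nonnegative shift quantities, and then bound the remaining corrector pieces by a two-region case analysis built on the decay of $u$ and $\psi$ at infinity.

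First, I would substitute $t=0$ in \eqref{vepansbar2} and use $\xs_1(0)=x_1^0-\delta_\ep$, $\xs_2(0)=x_2^0+\delta_\ep$, together with $\ep^{2s}\overline{\sigma}(0,x)=\ep^{2s}\sigma(0,x)/\beta+\ep^{2s}\delta_\ep/\beta$, to obtain the identity
\[
\vs_\ep(0,x)-v_\ep^0(x) = \frac{\ep^{2s}\delta_\ep}{\beta}+\Delta_1(x)+\Delta_2(x)+R_1(x)+R_2(x),
\]
where $\Delta_1(x):=u((x-x_1^0+\delta_\ep)/\ep)-u((x-x_1^0)/\ep)$, $\Delta_2(x):=u((x_2^0+\delta_\ep-x)/\ep)-u((x_2^0-x)/\ep)$, and $R_1,R_2$ denote the two corrector contributions $\mp\ep^{2s}\cs_i(0)\psi(\cdot)$. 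Since $u$ is strictly increasing and $\delta_\ep>0$, both $\Delta_1$ and $\Delta_2$ are nonnegative, so only the correctors $R_i$ can threaten the inequality.

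Next, from \eqref{dynamicalsysbar2} at $t=0$, the velocities $\cs_i(0)$ are bounded by a constant depending only on $\theta_0$ and $\|\sigma\|_\infty$ (for $\delta_\ep$ small). Combined with $\psi\in L^\infty(\R)$ this yields the uniform rough bound $|R_1(x)|+|R_2(x)|\le C\ep^{2s}$. Integrating \eqref{psi'infty} gives the sharper tail estimate $|\psi(y)|\le C/(1+|y|^{2s})$, hence $|R_i(x)|\le C\ep^{4s}/|x-x_i^0|^{2s}$ for $|x-x_i^0|\ge\ep$; the corresponding asymptotics \eqref{uinfinity} of $u$ yield $\Delta_i(x)\ge c\,\ep^{2s}\delta_\ep/|x-x_i^0|^{2s+1}$ in the same regime.

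I would then split $\R$ into two regimes. In the inner region, where $\min(|x-x_1^0|,|x_2^0-x|)\le L$ for a suitable fixed $L$, the corresponding $\Delta_i$ is of order one near the transition point itself and decays only like $1/|x-x_i^0|^{2s}$ away from it, which dominates the rough bound $C\ep^{2s}$ on the correctors. In the outer region where $\min_i|x-x_i^0|\ge L$, the refined bound gives $|R_1|+|R_2|\le C\ep^{4s}/L^{2s}$, which is absorbed by the explicit positive constant $\ep^{2s}\delta_\ep/\beta$ provided $\delta_\ep\ge C\beta\ep^{2s}/L^{2s}$. This compatibility condition forces $\delta_\ep\gtrsim\ep^{2s}$, which is still $o(1)$; if the value provided by Proposition \ref{thetaeprop} is smaller, it may be harmlessly enlarged since the supersolution conclusion there is stated for any $\delta\ge\delta_\ep$.

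The main obstacle will be calibrating $L$ and $\delta_\ep$ jointly so that the inner and outer regimes overlap smoothly. This works precisely because $u(y)-H(y)$ and $\psi(y)$ share the same polynomial tail exponent $2s$ at infinity, so the ``$\Delta_i$ dominates'' regime and the ``constant term dominates'' regime meet at a common threshold whenever $\delta_\ep$ satisfies the lower bound above.
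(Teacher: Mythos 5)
Your overall strategy is sound and is in substance the same as the paper's (the paper does not write this proof out, but refers to the proof of Lemma \ref{vbarelessvtilteinitialtimelem}, which uses exactly this near/far splitting: monotonicity of $u$ makes the shifted layers larger, the gain near the layers beats the correctors, and the extra $\delta_\ep$ hidden in $\overline{\sigma}$ absorbs the correctors far away). Your decomposition $\vs_\ep(0,x)-v_\ep^0(x)=\ep^{2s}\delta_\ep/\beta+\Delta_1+\Delta_2+R_1+R_2$ is exactly right, as are the observations that $\cs_i(0)=O(1)$ because the initial separation is $\theta_0+2\delta_\ep\ge\theta_0>0$, and that $\delta_\ep$ may be harmlessly enlarged since Proposition \ref{thetaeprop} is stated for every $\delta\ge\delta_\ep$.

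However, several intermediate estimates are wrong as written, and one of them is the step your inner-region argument actually rests on. First, the correctors are centered at $\xs_i(0)=x_i^0\mp\delta_\ep$, not at $x_i^0$, and $\delta_\ep\gg\ep$; hence the bound $|R_i(x)|\le C\ep^{4s}/|x-x_i^0|^{2s}$ for $|x-x_i^0|\ge\ep$ fails, e.g.\ at $x=\xs_1(0)$, where $|x-x_1^0|=\delta_\ep\ge\ep$ but $|R_1|$ can be of order $\ep^{2s}$. Second, the lower bound $\Delta_i(x)\ge c\,\ep^{2s}\delta_\ep/|x-x_i^0|^{2s+1}$ cannot hold down to $|x-x_i^0|=\ep$ (its right-hand side would exceed $1$ there); via \eqref{u'infinity} and the mean value theorem it is valid only for $|x-x_i^0|\gtrsim\delta_\ep$. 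Third, and most seriously, in the inner region $\Delta_i$ does \emph{not} ``dominate the rough bound $C\ep^{2s}$ on the correctors'': at $|x-x_i^0|\sim L$ one has $\Delta_i\sim\delta_\ep\ep^{2s}/L^{1+2s}\ll\ep^{2s}$, so the argument as literally stated fails at the outer edge of the inner region. The repair stays within your strategy but requires reorganizing the inner region around the shifted centers $\xs_i(0)$: within distance $O(\delta_\ep)$ of $\xs_i(0)$ one gets $\Delta_i\ge c(\ep/\delta_\ep)^{2s}\gg\ep^{2s}\gtrsim |R_i|$ (here the rough bound on $R_i$ suffices); outside that core both $\Delta_i$ and $R_i$ decay polynomially with the correct centering, and the comparison $\delta_\ep\ep^{2s}r^{-1-2s}\gtrsim\ep^{4s}r^{-2s}$ reduces to $\delta_\ep\ge C\ep^{2s}r$, which holds for all $r\le L$ under your calibration $\delta_\ep\ge CL\ep^{2s}$. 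With these corrections (and noting that the ``cross'' corrector $R_2$ near $x_1^0$ is $O(\ep^{4s})$ and is absorbed by $\ep^{2s}\delta_\ep/\beta$), your proof closes.
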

The proof of Proposition \ref{thetaeprop} and Lemma \ref{initialcondprop2} is postponed to Section~\ref{thetaeppropsecproof}.

 Now we consider the barrier function $\vs_\ep$ defined in \eqref{vepansbar2}, where  $(\xs_1,\xs_2)$ is the solution of  system  \eqref{dynamicalsysbar2} in which we fix
 $\delta=\delta_\ep$, with $\delta_\ep$ given by Proposition \ref{thetaeprop}. 
 For $\ep$ small enough,  since $T_c^\delta$ is finite by \eqref{Tcdeltalim2} and  $\overline{\theta}(T_c^\delta)=0$, there exists 
  $T^1_\ep>0$ such that 
  \beq\label{xsep2-xsep1=thetaep}\overline{\theta}(T^1_\ep)=\xs_2(T^1_\ep)-\xs_1(T^1_\ep)=\theta_\ep,\eeq and 
  $$\overline{\theta}(t)=\xs_2(t)-\xs_1(t)>\theta_\ep\quad\text{for any }t<T^1_\ep,$$
where $\theta_\ep$ was fixed by Proposition \ref{thetaeprop}.

    Then by Proposition \ref{thetaeprop} and Lemma \ref{initialcondprop2},  we have that~$\vs_\ep$ is a supersolution of \eqref{vepeq}-\eqref{vep0} in $(0,T^1_\ep)\times\R$, and the comparison principle implies 
 \beq\label{veplessvepbar} v_\ep(t,x)\le \vs_\ep(t,x)\quad\text{for any }(t,x)\in [0,T^1_\ep]\times\R.\eeq
 Moreover, since $\theta_\ep=o(1)$  we have 
 \beq\label{T1elim}T^1_\ep=T_c+o(1)\quad\text{as }\ep\to 0.\eeq 
 Indeed, if up to subsequences, $T^1_\ep$ converges as $\ep\to 0$ to some $T>0$, since $T^1_\ep\leq T_c^{\delta_\ep}$ then by \eqref{Tcdeltalim2} we have $T\leq T_c$. Suppose by contradiction that 
 \beq\label{tlesstccontr}T< T_c.\eeq Then by Proposition \ref{holderthetaprop} and \eqref{xsep2-xsep1=thetaep}
 $$|\overline{\theta}(T^1_\ep)-\overline{\theta}(T)|=|\theta_\ep-\overline{\theta}(T)|\leq C|T^1_\ep-T|^\alpha,$$ for some $C>0$ and $\alpha\in(0,1)$ independent of $\epsilon$. This and 
 \eqref{xideltalim2} imply that $\theta(T)=0$ which is in contradiction with \eqref{tlesstccontr}. Thus \eqref{T1elim} is proven.

Next, to conclude the proof of Theorem \ref{mainthmbeforecoll}, we are going to show that starting from $T^1_\ep$, after a small time $t_\ep$, the function  $v_\ep$ satisfies
\beq\label{veplessvarrhoproof}v_\ep(t,x)\le\varrho_\ep\eeq for some $\varrho_\ep=o(1)$ as $\ep\to0$.
For this scope, we denote
\beqs \xs_1^\ep:=\xs_1(T^1_\ep),\quad \xs_2^\ep:=\xs_2(T^1_\ep).\eeqs Remember that from \eqref{xsep2-xsep1=thetaep}
\beq\label{xsep2-xsep1=thetaep2} \xs_2^\ep-\xs_1^\ep=\theta_\ep.\eeq
We show  \eqref{veplessvarrhoproof} for $x\leq  \xs_1^\ep+\frac{\theta_\ep}{2}$, similarly one can prove  it for $x\geq  \xs_1^\ep+\frac{\theta_\ep}{2}$.
For this aim let us introduce   the following  further perturbed system, for $\dss>\delta_\ep$

\beq\label{dynamicalsysbarbar2}\begin{cases} \dot{\xss}_1
=\gamma\left(\displaystyle\frac{1}{2s (\xss_2-\xss_1)^{2s}}-\sigma(t,\xss_1)-\dss\right)&\text{in }(0,T_c^{\dss})\\
\dot{\xss}_2
=\gamma\left(\displaystyle-\frac{1}{2s (\xss_2-\xss_1)^{2s}}+\sigma(t,\xss_2)+\dss\right)&\text{in }(0,T_c^{\dss})\\
 \xss_1(0)=\xs_1^\ep-\theta_\ep,\,\xss_2(0)=\xs_2^\ep+K\theta_\ep,
\end{cases}\eeq
for some $K>1$ to be chosen, see Figure~4.
\bigskip\bigskip

\begin{center}
\includegraphics[width=0.95\textwidth]{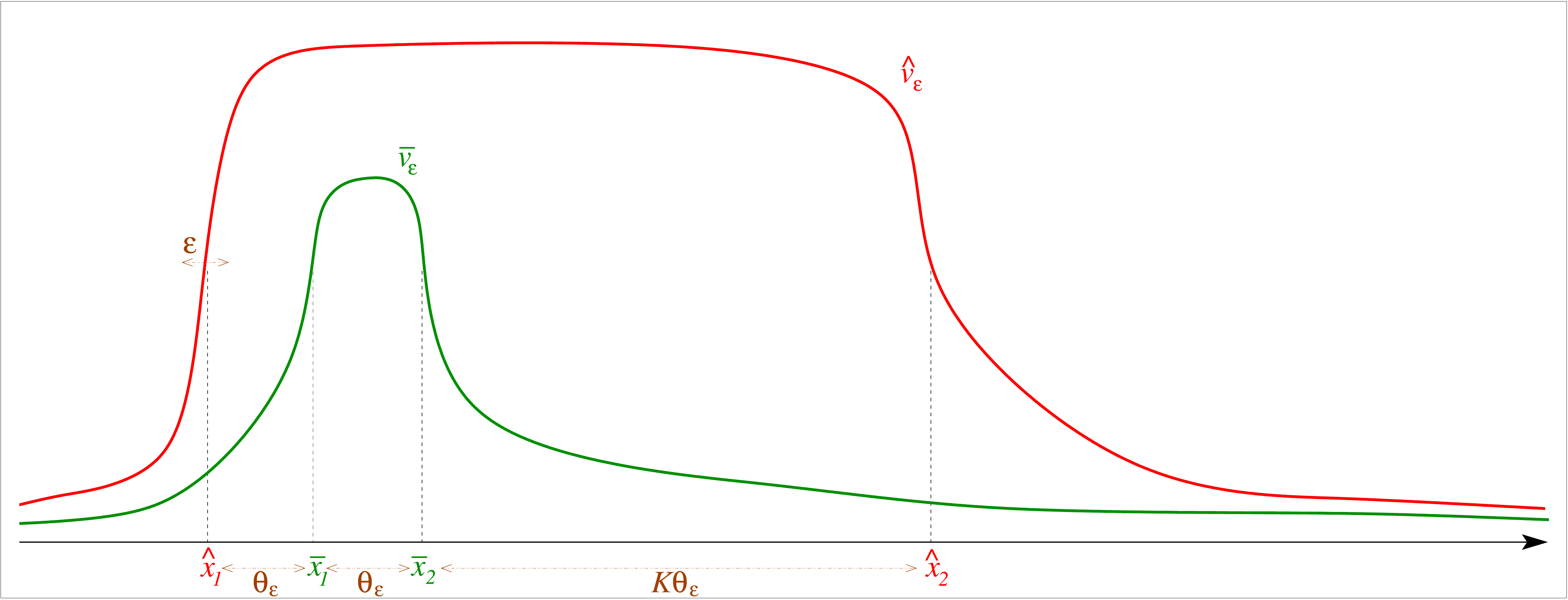}
{\footnotesize{\\
$\,$\\
Figure 4: The geometry involved in system \eqref{dynamicalsysbarbar2}.}}
\end{center}
\bigskip\bigskip

Roughly speaking, the idea behind the system in~\eqref{dynamicalsysbarbar2}
is that at time~$T^1_\ep$, also the $\delta$-perturbed
particles~$\xs_1$ and~$\xs_2$ that were introduced in~\eqref{dynamicalsysbar2.0}
are close to collision. Nevertheless, these particles are ``only''
$\theta_\ep$-close to collision, with~$\theta_\ep$
small, but still much larger than~$\ep$,
thanks to Proposition~\ref{thetaeprop}.
Since the excursion in the transition layers is scaled by~$\ep$,
one can still hope to ``desingularize'' these $\theta_\ep$-collisions.
For this, it is useful to consider the ``asymmetric''
picture introduced in~\eqref{dynamicalsysbarbar2},
in which the ``left particle'' is moved to the left by~$\theta_\ep$,
while the ``right particle'' is moved to the right
by a large multiple of~$\theta_\ep$.
In this way the ``middle point'' between the new particles~${\xss}_1$
and~${\xss}_2$
introduced in~\eqref{dynamicalsysbarbar2} ends up
to the right of the collision point
of the particles~$\xs_1$ and~$\xs_2$ that were introduced in~\eqref{dynamicalsysbar2.0}
(a formal statement will be given in Lemma~\ref{teplem}).
With this construction, the ``tail'' of the dislocation
associated to the
new particles~${\xss}_1$
and~${\xss}_2$ ends up ``above'' the main bump
of the dislocation corresponding to
the particles~$\xs_1$ and~$\xs_2$. Therefore,
using the decay of the dislocation tail,
the main bump
of the dislocation corresponding to
the particles~$\xs_1$ and~$\xs_2$
will be proved to be small.
\medskip

Of course, several technicalities arise
when making the above argument rigorous. For this scope, we set 
\beq\label{xbarbarpunto2}\css_i(t):= \dot{\xss}_i(t),\quad i=1,2\eeq
and
$$ \hat{\sigma}:=\displaystyle\frac{\sigma+\dss}{W''(0)}.$$

\noindent We define
\beq\label{vepansbarbar2}\begin{split}\hat{v}_\ep(t,x)&:=\ep^{2s}\sigss(t,x)+ u\left(\displaystyle\frac{x-\xss_1(t)}{\ep}\right)+ u\left(\displaystyle\frac{\xss_2(t)-x}{\ep}\right)-1\\&
-\ep^{2s}\css_1(t)\psi\left(\displaystyle\frac{x-\xss_1(t)}{\ep}\right)+\ep^{2s}\css_2(t)\psi\left(\displaystyle\frac{\xss_2(t)-x}{\ep}\right),\end{split}
\eeq
where again $u$ and $\psi$ are  respectively the solution of \eqref{u} and \eqref{psi}. 
With this notation, we are in the position to estimate
the modified dislocation~$\vs_\ep$ at time~$T_\ep^1$
with the modified dislocation~$\hat{v}_\ep$ at the initial time,
as stated rigorously in the next result:

\begin{lem}\label{vbarelessvtilteinitialtimelem} There exist $\ep_0,\,\dss_\ep>0$ with  $ \delta_\ep<\dss_\ep= \delta_\ep+o(1)$ as $\ep\to0$, where $\delta_\ep$ is given by Proposition \ref{thetaeprop}, such that  if  $(\xss_1,\xss_2)$ is the solution to  system  \eqref{dynamicalsysbarbar2} with $\dss=\dss_\ep$, then 
the function $\hat{v}_\ep$ defined in \eqref{vepansbarbar2} satisfies
\beqs\hat{v}_\ep(0,x)\ge \vs_\ep(T_\ep^1,x)\quad\text{for any }x\in\R.\eeqs
\end{lem}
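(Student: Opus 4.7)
The plan is to prove the pointwise inequality $\hat{v}_\ep(0,x)\ge\vs_\ep(T_\ep^1,x)$ by expanding \eqref{vepansbar2} and \eqref{vepansbarbar2} with the prescribed choice $\xss_1(0)=\xs_1^\ep-\theta_\ep$, $\xss_2(0)=\xs_2^\ep+K\theta_\ep$, and reading off the decomposition
\begin{equation*}
\hat{v}_\ep(0,x)-\vs_\ep(T_\ep^1,x)=\Delta_\sigma(x)+\Delta_u(x)+\Delta_\psi(x),
\end{equation*}
where $\Delta_\sigma(x)=\tfrac{\ep^{2s}}{\beta}\bigl[\sigma(0,x)-\sigma(T_\ep^1,x)+\dss_\ep-\delta_\ep\bigr]$, $\Delta_u$ is the sum of the two transition-layer differences $u\bigl(\tfrac{x-\xs_1^\ep+\theta_\ep}{\ep}\bigr)-u\bigl(\tfrac{x-\xs_1^\ep}{\ep}\bigr)$ and $u\bigl(\tfrac{\xs_2^\ep+K\theta_\ep-x}{\ep}\bigr)-u\bigl(\tfrac{\xs_2^\ep-x}{\ep}\bigr)$, and $\Delta_\psi$ collects the four $\psi$-correction terms.

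The key observation is that $\Delta_u(x)\ge 0$ for every $x$, by the strict monotonicity of $u$ together with $\xss_1(0)<\xs_1^\ep$ and $\xss_2(0)>\xs_2^\ep$. Moreover $\Delta_u$ is quantitatively large in the ``collision strip'': since $\theta_\ep/\ep\to\infty$ by Proposition~\ref{thetaeprop}, for $x$ in a suitable neighborhood of $[\xs_1^\ep,\xs_2^\ep]$ one has $u(\theta_\ep/\ep)-u(0)=1/2+o(1)$, so $\Delta_u(x)\ge 1/2+o(1)$ on this strip. This easily dominates the $O(\ep^{2s})$ contributions from $|\Delta_\sigma|$ and $|\Delta_\psi|$; the $\psi$-bound uses \eqref{psi'infty} together with the $L^\infty$-control on the velocities $\cs_i(T_\ep^1)$ and $\css_i(0)$ coming from \eqref{dynamicalsysbar2} and \eqref{dynamicalsysbarbar2} (the gaps $\xs_2^\ep-\xs_1^\ep=\theta_\ep$ and $\xss_2(0)-\xss_1(0)=(K+2)\theta_\ep$ being bounded below by $\theta_\ep>0$).

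Away from the collision strip I would sharpen the analysis using the asymptotic expansions \eqref{uinfinity} and \eqref{u'infinity}, which yield a quantitative lower bound of the form $\Delta_u(x)\gtrsim \tfrac{\ep^{2s}(K+1)\theta_\ep}{(1+|x-\xs_2^\ep|)^{2s+1}}+\tfrac{\ep^{2s}\theta_\ep}{(1+|x-\xs_1^\ep|)^{2s+1}}$, while the matching decay \eqref{psi'infty} provides a comparable bound for $|\Delta_\psi|$. The remaining $\Delta_\sigma$ term is then handled by choosing $K$ large enough that the asymmetric push of $\xss_2(0)$ to the right creates a positive margin in $\Delta_u$, and by selecting $\dss_\ep>\delta_\ep$ of the correct order so that the perturbed system \eqref{dynamicalsysbarbar2} still falls within the supersolution framework of Proposition~\ref{thetaeprop} (which forces the closeness $\dss_\ep=\delta_\ep+o(1)$).

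The main obstacle is making this balance sharp in the intermediate regime $|x-\xs_j^\ep|=O(1)$, where $\Delta_u$ has already decayed to $o(\ep^{2s})$ while $\Delta_\sigma+\Delta_\psi$ is still of order $\ep^{2s}$ with an a priori unclear sign (the size of $\sigma(0,x)-\sigma(T_\ep^1,x)$ being controlled only by $MT_\ep^1$ via \eqref{sigmaassump}, and the signs of $\cs_i,\css_i$ entering $\Delta_\psi$ through their role as velocities near collision). It is precisely the combined calibration of $K>1$ and of $\dss_\ep$---which must create enough asymmetric margin to beat the $\psi$-correction while keeping $\dss_\ep$ close to $\delta_\ep$---that resolves the positivity in this zone and yields the stated conclusion.
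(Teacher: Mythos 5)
Your skeleton (decomposition into $\Delta_\sigma+\Delta_u+\Delta_\psi$, positivity of $\Delta_u$ by monotonicity of $u$, and a large gain in $\Delta_u$ within distance $\approx\ep^\alpha$ of the shifted particles) matches the paper's proof in Section~\ref{thetaeppropsecproof}. But the step you yourself flag as ``the main obstacle'' is genuinely missing, and the mechanism you propose for it does not work. You want to dominate $|\Delta_\psi|$ by $\Delta_u$ using matching decay rates, but the rates do not match: by \eqref{u'infinity} one has $\Delta_u\lesssim \ep^{2s}\theta_\ep\,|x-\xs_i^\ep|^{-1-2s}$, whereas $|\Delta_\psi|\lesssim \ep^{2s}\,|\cs_i|\,|\psi((x-\xs_i^\ep)/\ep)|\sim \ep^{4s}\theta_\ep^{-2s}|x-\xs_i^\ep|^{-2s}$ (since $\psi$ only decays like $|y|^{-2s}$ after integrating \eqref{psi'infty}). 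The ratio $|\Delta_\psi|/\Delta_u$ grows linearly in $|x-\xs_i^\ep|$, so $\Delta_u$ cannot absorb $\Delta_\psi$ in the intermediate and far regimes, no matter how $K$ is chosen. (Also, your claim $\Delta_u\ge 1/2+o(1)$ on the whole collision strip is false in the interior of $[\xs_1^\ep,\xs_2^\ep]$, where both layers are already near $1$ and $\Delta_u$ is only $O(\ep^{2s}\theta_\ep^{-2s})$; and $K$ plays no role in this lemma at all --- it is fixed only later, in Lemma~\ref{teplem}, for a different purpose.)

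The paper's actual resolution of the regime $|x-\xs_i^\ep|,|x-\xss_i(0)|\ge\ep^\alpha$ for all $i$ is different and you should adopt it: there one only uses $\Delta_u\ge0$, and shows that the \emph{whole} $\psi$-contribution is $o(1)$ in the units of $\ep^{2s}$, uniformly in $x$. Indeed $|\cs_i(T_\ep^1)|,|\css_i(0)|=O(\theta_\ep^{-2s})$, while $|\psi(y)|\le|\psi(\ep^{\alpha-1})|+C\ep^{2s(1-\alpha)}$ for $|y|\ge\ep^{\alpha-1}$, and the calibration \eqref{thetaep3} of $\theta_\ep$ (in particular $\theta_\ep^{-2s}\psi(\ep^{\alpha-1})$ and $\ep^{2s(1-\alpha)}\theta_\ep^{-2s}$ being $o(1)$) makes each product $\cs_i\psi_i$, $\css_i\psi_i$ vanish as $\ep\to0$. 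This leaves $\hat v_\ep(0,x)-\vs_\ep(T_\ep^1,x)\ge\ep^{2s}\big(o(1)+c(\dss-\delta_\ep)\big)$, and one then \emph{defines} $\dss_\ep=\delta_\ep+o(1)$ large enough to absorb that explicit, $\dss$-independent error --- the constant-in-$x$ margin $\ep^{2s}(\dss_\ep-\delta_\ep)/W''(0)$, not $\Delta_u$, is what beats $\Delta_\psi$ away from the particles. Finally, the worry about $\sigma(0,x)-\sigma(T_\ep^1,x)$ being of order one disappears once you note that the barrier $\hat v_\ep(t,\cdot)$ is compared with $v_\ep(T_\ep^1+t,\cdot)$, so the forcing in the hat system is the time-translate of $\sigma$ and the $\sigma$-terms cancel exactly, leaving only $\dss-\delta_\ep$.
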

The proof  Lemma \ref{vbarelessvtilteinitialtimelem} is postponed to Section~\ref{thetaeppropsecproof}.
Now we deduce some geometric consequence
from Lemma \ref{vbarelessvtilteinitialtimelem}, as depicted in Figure~5
and rigorously presented in the subsequent Lemma~\ref{teplem}.
\bigskip\bigskip

\begin{center}
\includegraphics[width=0.95\textwidth]{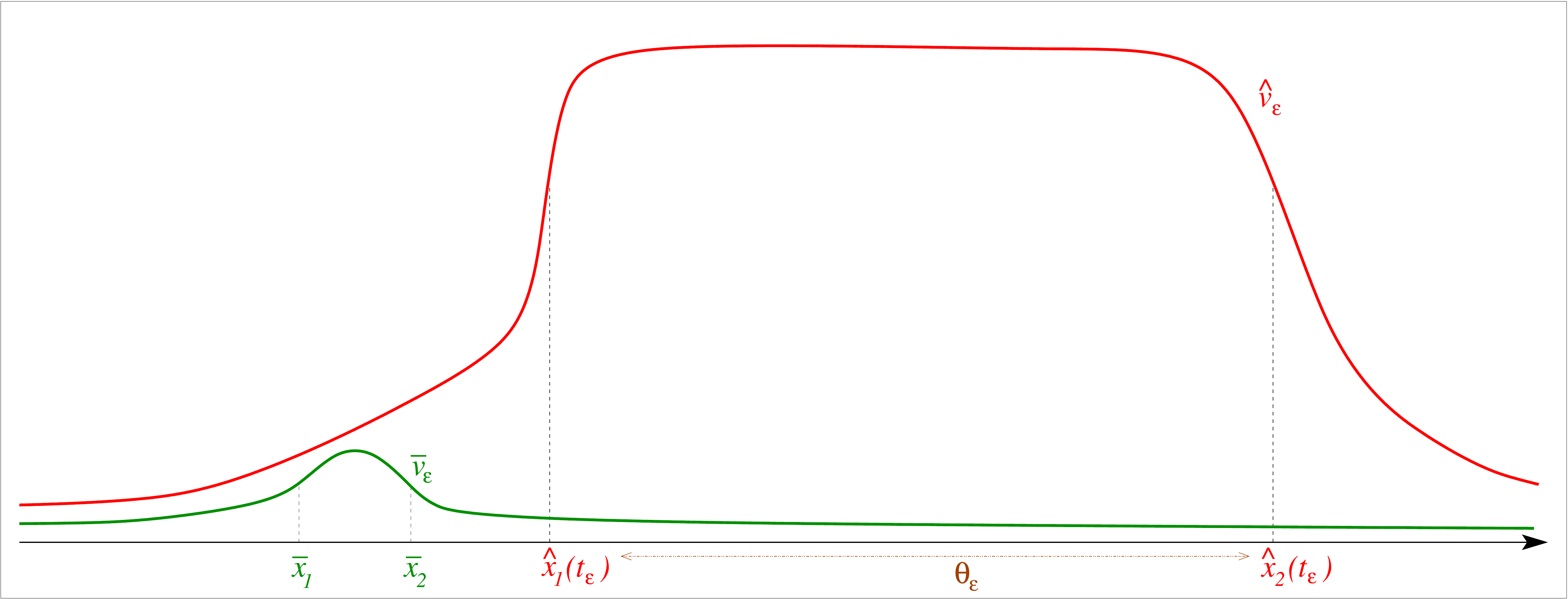}
{\footnotesize{\\
$\,$\\
Figure 5: The geometry involved in Lemmata~\ref{vbarelessvtilteinitialtimelem}
and~\ref{teplem}.}}
\end{center}
\bigskip\bigskip

\begin{lem}\label{teplem}Let
\beq\label{tep} t_\ep:=\frac{4s(K+2)^{2s}\theta_\ep^{2s+1}}{\gamma[1-2s(K+2)^{2s}\theta_\ep^{2s}(\|\sigma\|_\infty+\dss)]}.\eeq
Then there exists $K>1$ and $\ep_0>0$ such that for any $\ep<\ep_0$  the solution $(\xss_1,\xss_2)$  to  system  \eqref{dynamicalsysbarbar2} satisfies
\beq\label{xss1graterxs2lem} \xss_1(t_\ep)\geq \xs_2^\ep \eeq and for any $t\in[0,t_\ep]$
\beq\label{xss2-xss1tep} \xss_2(t)-\xss_1(t)\geq \xss_2(t_\ep)-\xss_1(t_\ep)\geq \theta_\ep.\eeq
\end{lem}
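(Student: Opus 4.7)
The plan is to reduce the lemma to three elementary ODE estimates. Set $D(t) := \xss_2(t) - \xss_1(t)$, so by \eqref{xsep2-xsep1=thetaep2} and the initial data in \eqref{dynamicalsysbarbar2} we have $D(0) = (K+2)\theta_\ep$. Subtracting the two equations in \eqref{dynamicalsysbarbar2} yields
\begin{equation*}
\dot D(t) = \gamma\left(-\frac{1}{s D(t)^{2s}} + \sigma(t,\xss_2(t)) + \sigma(t,\xss_1(t)) + 2\dss\right).
\end{equation*}
First I would establish monotonicity $\dot D \le 0$ on $[0,t_\ep]$ (which gives the leftmost inequality in \eqref{xss2-xss1tep}); then I would integrate a crude lower bound on $\dot D$ to get $D(t_\ep) \geq \theta_\ep$; and finally I would integrate a lower bound on $\dot\xss_1$ to get \eqref{xss1graterxs2lem}. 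The constant $K$ will be fixed at the end to make the arithmetic in the second step close.

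For the monotonicity, as long as $D(t) \leq (K+2)\theta_\ep$ the right-hand side of the displayed ODE is at most $\gamma\bigl(-\tfrac{1}{s(K+2)^{2s}\theta_\ep^{2s}} + 2(\|\sigma\|_\infty+\dss)\bigr)$, which is strictly negative once $\theta_\ep$ is sufficiently small (using $\theta_\ep = o(1)$ from Proposition~\ref{thetaeprop} and $\dss = \delta_\ep + o(1)$ from Lemma~\ref{vbarelessvtilteinitialtimelem}). Since $\dot D(0)<0$, a standard continuity argument prevents $D$ from ever exceeding $(K+2)\theta_\ep$ on $[0,t_\ep]$, so $\dot D \le 0$ throughout, which yields the leftmost inequality in \eqref{xss2-xss1tep}. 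For the lower bound on $D$, the crude estimate $\dot D \geq -\gamma/(sD^{2s})$, multiplied by $D^{2s}$ and integrated, gives
\begin{equation*}
D(t)^{2s+1} \geq (K+2)^{2s+1}\theta_\ep^{2s+1} - \tfrac{(2s+1)\gamma t}{s}.
\end{equation*}
Inserting $t=t_\ep$ from \eqref{tep} and the bound $1-2s(K+2)^{2s}\theta_\ep^{2s}(\|\sigma\|_\infty+\dss)\geq\tfrac12$ (valid for $\ep$ small) leads to $D(t_\ep)^{2s+1}\geq (K+2)^{2s}\bigl[(K+2)-8(2s+1)\bigr]\theta_\ep^{2s+1}$. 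Choosing $K$ a large enough absolute constant, uniform in $s \in (0,1)$ (e.g., any $K\geq 23$), makes the bracket at least $1$, giving $D(t_\ep)\geq \theta_\ep$.

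Finally, using again $D(t) \leq (K+2)\theta_\ep$ on $[0,t_\ep]$,
\begin{equation*}
\dot\xss_1(t) \geq \frac{\gamma}{2s(K+2)^{2s}\theta_\ep^{2s}}\bigl[1 - 2s(K+2)^{2s}\theta_\ep^{2s}(\|\sigma\|_\infty + \dss)\bigr].
\end{equation*}
Integrating over $[0,t_\ep]$ and plugging in the explicit form of $t_\ep$ in \eqref{tep} cancels both the bracket and $(K+2)^{2s}\theta_\ep^{2s}$ exactly, producing $\xss_1(t_\ep) - \xss_1(0) \geq 2\theta_\ep$; combined with $\xss_1(0) = \xs_1^\ep - \theta_\ep$ and \eqref{xsep2-xsep1=thetaep2} this gives $\xss_1(t_\ep) \geq \xs_1^\ep + \theta_\ep = \xs_2^\ep$, which is \eqref{xss1graterxs2lem}. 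The only substantive point in the argument is calibrating $K$ in the middle step; the rest is bookkeeping, with the precise form of $t_\ep$ in \eqref{tep} tailor-made so that the final integration reaches the target $2\theta_\ep$.
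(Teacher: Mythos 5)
Your argument has the same structure as the paper's and reproduces all three of its steps: the monotonicity of $D=\xss_2-\xss_1$ via the observation that the right-hand side of its ODE stays negative while $D\le(K+2)\theta_\ep$ (the paper phrases this by comparing with the stationary supersolution of $\dot\theta=\gamma\bigl(-\tfrac1{s\theta^{2s}}+2(\|\sigma\|_\infty+\dss)\bigr)$, which is the same continuity argument); the lower bound on $D$ obtained by integrating $D^{2s}\dot D$; and the bound $\xss_1(t_\ep)\ge\xs_2^\ep$ obtained by integrating the lower bound on $\dot\xss_1$ (the paper instead bounds the hitting time of $\xs_2^\ep$ by $t_\ep$, which is the identical computation read in the other direction). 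Your exact cancellation giving $\xss_1(t_\ep)-\xss_1(0)\ge 2\theta_\ep$ is correct.

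There is, however, one step that is wrong as written: the ``crude estimate'' $\dot D\ge-\gamma/(sD^{2s})$. From \eqref{dynamicalsysbarbar2} one has $\dot D=\gamma\bigl(-\tfrac{1}{sD^{2s}}+\sigma(t,\xss_1)+\sigma(t,\xss_2)+2\dss\bigr)$, and nothing in this lemma forces $\sigma(t,\xss_1)+\sigma(t,\xss_2)+2\dss\ge 0$: the hypothesis $\sigma\equiv 0$ appears only in Theorem~\ref{thmexponentialdecay}, while here $\sigma$ is an arbitrary function satisfying \eqref{sigmaassump} and $\dss=o(1)$, so the two $\sigma$-terms can be as negative as $-2\|\sigma\|_\infty$ and your pointwise lower bound can fail. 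The repair is immediate and is exactly what the paper does: use $\dot D\ge\gamma\bigl(-\tfrac1{sD^{2s}}-2\|\sigma\|_\infty\bigr)$, so that after multiplying by $D^{2s}$ the extra term is controlled by $2s\|\sigma\|_\infty(K+2)^{2s}\theta_\ep^{2s}=o(1)$ (using the already-established bound $D\le(K+2)\theta_\ep$), and the integrated inequality changes only by a bounded factor that is absorbed into the choice of $K$ (e.g.\ your bracket becomes $(K+2)-16(2s+1)$ instead of $(K+2)-8(2s+1)$, so a slightly larger absolute constant for $K$ still works). With that correction your proof is complete.
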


\begin{proof} Let us denote
\beqs \hat{\theta}(t):=\xss_2(t)-\xss_1(t).\eeqs
Then $\hat{\theta}(t)$  is solution of 
\beqs\begin{cases} \dot{\hat{\theta}}
=\gamma\left(\displaystyle-\frac{1}{s \hat{\theta}^{2s}}+\sigma(t,\xss_2)+\sigma(t,\xss_1)+2\dss\right)&\text{in }(0,T_c^{\dss})\\
\hat{\theta}(0)=(K+2)\theta_\ep.
\end{cases}\eeqs
Moreover $\hat{\theta}$ is a
subsolution of the equation 

\beq\label{thetabarequlemmatep}\dot{\theta}=\gamma\left(\displaystyle-\frac{1}{s \theta^{2s}}+2(\|\sigma\|_\infty+\dss)\right).\eeq
Equation \eqref{thetabarequlemmatep} has the stationary solution  $\theta_s(t)=\left[\frac{1}{2s(\|\sigma\|_\infty+\dss)}\right]^\frac{1}{2s}$. Therefore 
for $\ep$ small enough such that 
\beqs\thh(0)=(K+2)\theta_\ep<\left[\frac{1}{2s(\|\sigma\|_\infty+\dss)}\right]^\frac{1}{2s},\eeqs
since $\thh$ cannot touch $\theta_s$, its derivative remains negative. Hence
for $t>0$
\beq\label{theta1decreqsecond2}\thh(t)<(K+2)\theta_\ep.\eeq
Now,  \eqref{dynamicalsysbarbar2} and \eqref{theta1decreqsecond2} imply 
\beq\label{xss1inelemep2}  \dot{\xss}_1\geq \left(\displaystyle\frac{1}{2s(K+2)^{2s} (\theta_\ep)^{2s}}-\|\sigma\|_\infty-\dss\right)>0.\eeq
Let  $t$ be the time such that $\xss_1(t)= \xs_2^\ep=\xss_1(0)+2\theta_\ep$, then integrating \eqref{xss1inelemep2} in $(0,t)$ we get 
\beqs \xss_1(t)-\xss_1(0)=2\theta_\ep\geq \gamma\left(\frac{1}{2s(K+2)^{2s}\theta_\ep^{2s}}-\|\sigma\|_\infty-\dss\right)t,\eeqs
from which 
\beq\label{tlessteplemtep2}t\leq t_\ep\eeq where $t_\ep$ is defined in \eqref{tep}.

Next, let $\tau>0$ be the time such that $\hat{\theta}(\tau)=\theta_\ep$, then for any $t\in(0,\tau)$ we have
$$\dot{\hat{\theta}}
\geq\gamma\left(\displaystyle\frac{-1-2s\|\sigma\|_\infty  \hat{\theta}^{2s}}{s \hat{\theta}^{2s}}\right)
\geq\gamma\left(\displaystyle\frac{-1-2s\|\sigma\|_\infty (K+2)^{2s}\theta_\ep^{2s}}{s \hat{\theta}^{2s}}\right),$$
i.e.,
$$\hat{\theta}^{2s}\dot{\hat{\theta}}\geq \frac{\gamma}{s}(-1-2s\|\sigma\|_\infty (K+2)^{2s}\theta_\ep^{2s}).$$
Integrating the previous inequality in $(0,\tau)$, we get
$$\frac{1}{2s+1}(\thh^{2s+1}(\tau)-\thh^{2s+1}(0))=\frac{1}{2s+1}\theta_\ep^{2s+1}(1-(K+2)^{2s+1})\geq \frac{\gamma}{s}(-1-2s\|\sigma\|_\infty (K+2)^{2s}\theta_\ep^{2s})\tau,$$ from which 
$$\tau\geq \frac{s\theta_\ep^{2s+1}[(K+2)^{2s+1}-1]}{\gamma(2s+1)(1+2s\|\sigma\|_\infty (K+2)^{2s}\theta_\ep^{2s})}.$$
Comparing $\tau$ with $t_\ep$ defined in  \eqref{tep}, we see that it is possible to choose $K$ big enough so that $$\tau>t_\ep.$$
For such a choice of $K$, the monotonicity of $\hat{\theta}$   implies \eqref{xss2-xss1tep}. Finally  \eqref{xss1graterxs2lem} is a consequence of 
 \eqref{tlessteplemtep2} and the monotonicity of $\xss_1$. This concludes the proof of the lemma.\end{proof}

With the auxiliary results introduced above,
we are now in the position to
conclude the proof of Theorem \ref{mainthmbeforecoll}. 
We consider now as barrier the function  $\hat{v}_\ep$ defined in \eqref{vepansbarbar2}, where we fix $\dss=\dss_\ep$ in system  \eqref{dynamicalsysbarbar2}, with 
$\dss_\ep$ given by Lemma \ref{vbarelessvtilteinitialtimelem}, and $K$ given by Lemma \ref{teplem}. 
For $\ep$ small enough, from \eqref{xss2-xss1tep}  and  Proposition \ref{thetaeprop}, the function $\hat{v}_\ep$  satisfies 
$$\ep(\hat{v}_\ep)_t-\I \hat{v}_\ep+\displaystyle\frac{1}{\ep^{2s}}W'(\hat{v}_\ep)-\sigma(t,x)\geq0\quad\text{in }(0,t_\ep)\times\R.$$
Moreover from \eqref{veplessvepbar} and Lemma \ref{vbarelessvtilteinitialtimelem} 
\beqs v_\ep(T_\ep^1,x)\le\hat{v}_\ep(0,x)\quad\text{for any }x\in\R.\eeqs
The comparison principle then implies 
\beq\label{concmainhm1} v_\ep(T_\ep^1+t,x)\le\hat{v}_\ep(t,x)\quad\text{for any }(t,x)\in[0,t_\ep]\times\R.\eeq
Now, for $x\leq  \xs_1^\ep+\frac{\theta_\ep}{2}$, from \eqref{xsep2-xsep1=thetaep2}, \eqref{xss1graterxs2lem} and \eqref{xss2-xss1tep}   we know that 
$$x-\xss_1(t_\ep)\leq-\frac{\theta_\ep}{2}\quad\text{and}\quad \xss_2(t_\ep)-x\ge \frac{3\theta_\ep}{2}.$$ Therefore, from estimate \eqref{uinfinity} we have

\beq\label{concmainhm2} u\left(\displaystyle\frac{x-\xss_1(t_\ep)}{\ep}\right)+ u\left(\displaystyle\frac{\xss_2(t_\ep)-x}{\ep}\right)-1\leq C\ep^{2s}\theta_\ep^{-2s}.\eeq
Moreover \eqref{xss2-xss1tep}, \eqref{dynamicalsysbarbar2} and \eqref{xbarbarpunto2} imply that 
\beq\label{concmainhm3}  |\css_i(t_\ep)|\leq C\theta_\ep^{-2s}.\eeq
Finally, from  \eqref{vepansbarbar2}, \eqref{concmainhm1}, \eqref{concmainhm2} and \eqref{concmainhm3}, we conclude that 
$$v_\ep(T_\ep^1+t_\ep,x)\leq C\ep^{2s}\theta_\ep^{-2s}\quad\text{for any }x\leq \xs_1^\ep+\frac{\theta_\ep}{2}.$$
The same inequality for $x\ge \xs_1^\ep+\frac{\theta_\ep}{2}$ can be proven considering the system \eqref{dynamicalsysbarbar2} with initial condition 
$$\xss_1(0)=\xs_1^\ep-K\theta_\ep,\quad\xss_2(0)=\xs_2^\ep+\theta_\ep$$ for $K$ large enough. 

 We have proven \eqref{vlesep2s} with 
 $$T_\ep:=T_\ep^1+t_\ep,$$ $t_\ep$ given by Lemma \ref{teplem} with $\dss=\dss_\ep$ given by Lemma \ref{vbarelessvtilteinitialtimelem}, and 
 $$\varrho_\ep=C\ep^{2s}\theta_\ep^{-2s}=o(1)\quad\text{as }\ep\to0, $$ with $\theta_\ep$ given by  Proposition \ref{thetaeprop}.
Moreover from \eqref{T1elim} and  \eqref{tep}  we see that 
$$T_\ep=T_c+o(1)\quad\text{as }\ep\to0, $$ and this concludes the proof of  Theorem \ref{mainthmbeforecoll}.


\section{Proof of Theorem \ref{thmexponentialdecay}}\label{ETA BETA}
We consider the function $h(\tau,\xi)$ which is solution of
\beq\label{hode}\begin{cases}h_\tau+W'(h)=0,&\forall\tau\in(0,+\infty)\\
h(0,\xi)=\xi.
\end{cases}
\eeq
From assumptions \eqref{Wass}, we have that
there exists $\ep_0>0$ such that for any $\ep<\ep_0$,
\beqs W''(\xi)\geq  \frac{W''(0)}{2} =\frac{\beta}{2}>0\quad\text{for any }|\xi|\leq \varrho_\ep\eeqs and
\beqs W'(\xi)>0\text{ for any }\xi\in (0,\varrho_\ep], \quad  W'(\xi)<0\text{ for any }\xi\in [-\varrho_\ep,0),\quad W'(0)=0.\eeqs
Therefore, the  solution  $h$ of \eqref{hode}  satisfies: $h(\tau,0)\equiv 0$; $h(\tau,\xi)$ is positive and  decreasing in $\tau$, if $\xi\in (0,\varrho_\ep]$;
$h(\tau,\xi)$ is negative and  increasing in $\tau$, if $\xi\in [-\varrho_\ep,0)$. Hence if $\xi\in (0,\varrho_\ep]$
$$h_\tau=-W'(h)\leq -\frac{\beta}{2}h,$$ which implies 
\beq\label{expondecayhpos}0<h(\tau,\xi)\leq \xi e^{-\frac{\beta}{2}\tau}.\eeq
Similarly for $\xi\in [-\varrho_\ep,0)$
\beq\label{expondecayhneg}\xi e^{-\frac{\beta}{2}\tau}\le h(\tau,\xi)<0.\eeq
Now, the function $\tilde{h}(t,x):=h(\frac{t-T^\ep}{\ep^{2s+1}},\varrho_\ep)$, where $T^\ep$ is given by Theorem \ref{mainthmbeforecoll}, is solution of the equation \eqref{vepeq} for $t>T^\ep$, with  $\tilde{h}(T^\ep,x)=\varrho_\ep$. Then, the comparison principle and estimate \eqref{vlesep2s} imply 
$$v_\ep(t,x)\leq \tilde{h}(t,x)\quad \text{for any }x\in\R,\,t>T^\ep,$$ and from \eqref{expondecayhpos} we get
$$v_\ep(t,x)\leq \varrho_\ep e^{-\frac{\beta}{2}\frac{t-T^\ep}{\ep^{2s+1}}}\quad \text{for any }x\in\R,\,t>T^\ep.$$ 
Finally, using \eqref{uinfinity}, it is easy to check that the initial datum \eqref{vep0}
satisfies
$$v_\ep^0(x)\geq -C\ep^{2s}\quad\text{for any }x\in\R,$$ therefore, we can similarly prove that
$$v_\ep(t,x)\geq -\varrho_\ep e^{-\frac{\beta}{2}\frac{t-T^\ep}{\ep^{2s+1}}}\quad \text{for any }x\in\R,\,t>T^\ep,$$ and this concludes the proof of Theorem  \ref{thmexponentialdecay}.


\section{Proof of Theorem \ref{mainthmbeforecoll3}}\label{R F 23}

We consider an auxiliary small parameter $\delta>0$ and define $(\xs_1(t),\xs_{2}(t),\xs_3(t))$ to be the solution to system: for $i=1,2,3$ 

\beq\label{dynamicalsysbar3}\begin{cases} \dot{\xs}_i=\gamma\left(
\displaystyle\sum_{j\neq i}\zeta_i\zeta_j 
\displaystyle\frac{\xs_i-\xs_j}{2s |\xs_i-\xs_j|^{1+2s}}-\zeta_i\sigma(t,\xs_i)-\zeta_i\delta\right)&\text{in }(0,T^\delta_c)\\
 \xs_i(0)=x_i^0-\zeta_i\delta,
\end{cases}\eeq
where $T^\delta_c$ is the collision time of the  perturbed system \eqref{dynamicalsysbar3},
see Figure~6.
\bigskip\bigskip

\begin{center}
\includegraphics[width=0.95\textwidth]{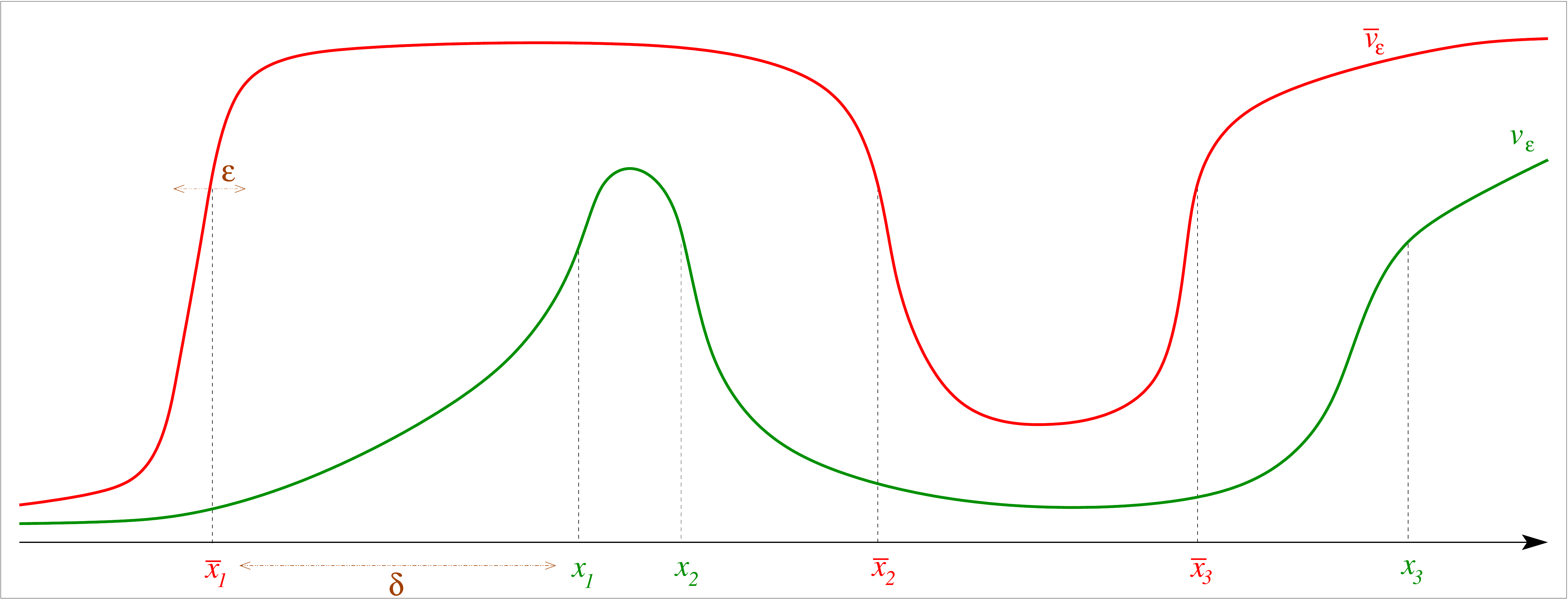}
{\footnotesize{\\
$\,$\\
Figure 6: The geometry involved in system \eqref{dynamicalsysbar3}.}}
\end{center}
\bigskip\bigskip

In a sense, the system in~\eqref{dynamicalsysbar3}
is the analogue, in the case of three particles,
of the system that was introduced in~\eqref{dynamicalsysbar2.0}.
As in that case, the scope of~\eqref{dynamicalsysbar3}
is to ``remove the singularity'' caused by the collision
in the original system. Of course, in the case of three
particles, an additional difficulty arises, since
the new particles need to be moved either to the left or
to the right, depending on the orientations of the original
dislocations. In particular, in the case of
three particles, in order to obtain bounds both by above and by below,
one also needs another system with the opposite sign convention
(this additional system will be introduced in formula~\eqref{ADD S}
below).
\medskip

Let us denote for $i=1,2$
\beq\label{thetabar3}\overline{\theta}_i(t):=\xs_{i+1}(t)-\xs_i(t)\eeq and 
$$\theta_0^i:=x_{i+1}^0-x_i^0>0,$$
then $(\overline{\theta}_1,\overline{\theta}_2)$ is solution of
\beq\label{thetaeq3}\begin{cases}\dot{\overline{\theta}}_{1}=\displaystyle\frac{\gamma}{s}\left(-\displaystyle\frac{1}{\overline{\theta}_{1}^{2s}}+\displaystyle\frac{1}{2(\overline{\theta}_{1}+\overline{\theta}_{2})^{2s}}+
\displaystyle\frac{1}{2\overline{\theta}_{2}^{2s}}+s\sigma(t,x_1)+s\sigma(t,x_2)+2s\delta\right)&\text{in }(0,T^\delta_c)\\
\dot{\overline{\theta}}_{2}=\displaystyle\frac{\gamma}{s}\left(\displaystyle\frac{1}{2\theta_{1}^{2s}}+\displaystyle\frac{1}{2(\overline{\theta}_{1}+\overline{\theta}_{2})^{2s}}-\displaystyle\frac{1}{\overline{\theta}_{2}^{2s}}-s\sigma(t,x_3)-s\sigma(t,x_2)-2s\delta\right)&\text{in }(0,T^\delta_c)\\
\overline{\theta}_{1}(0)=\theta_{1}^0+2\delta>0\\
\overline{\theta}_{2}(0)=\theta_{2}^0-2\delta>0.\\
\end{cases}\eeq

In the next result we will show that the error introduced by the $\delta$-perturbation
remains small in the trajectory and does not affect too much the collision
time. The precise statement goes as follows.

\begin{prop}\label{tclimprop3}
Let $(x_1,x_2,x_3)$ and $(\xs_1,\xs_2,\xs_3)$ be the solution respectively of system~\eqref{dynamicalsys3} and~\eqref{dynamicalsysbar3}. 
Let   $T_c<+\infty$ and $T_c^\delta$  be the collision time  respectively of~\eqref{dynamicalsys3} and~\eqref{dynamicalsysbar3}. Then we have 
\beq\label{Tcdeltalim3}\lim_{\delta\to 0}T_c^\delta=T_c,\eeq and for $i=1,2,3$
\beq\label{xideltalim}\lim _{\delta\to 0}\xs_i(t)=x_i(t)\quad\text{for any }t\in[0,T_c).\eeq
\end{prop}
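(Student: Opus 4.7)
The argument will mirror the two-particle analogue Proposition~\ref{tclimprop2}, but with additional bookkeeping required by the coupled dynamics of $(\overline{\theta}_1,\overline{\theta}_2)$ in~\eqref{thetaeq3}. The pointwise convergence~\eqref{xideltalim} and the lower bound $\liminf_{\delta\to 0}T_c^\delta\geq T_c$ will both follow from standard continuous dependence on parameters. Fix any $T'<T_c$: on $[0,T']$ the original trajectories satisfy $\min_{i=1,2}\theta_i(t)\geq \eta$ for some $\eta>0$, and the right-hand side of~\eqref{dynamicalsys3} is Lipschitz on the open set $\{(y_1,y_2,y_3)\colon y_{i+1}-y_i>\eta/2\}$. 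The perturbation in~\eqref{dynamicalsysbar3} amounts to an $O(\delta)$ change both in the vector field and in the initial datum, so Gronwall's inequality on $[0,T']$ yields $\xs_i\to x_i$ uniformly on $[0,T']$; in particular $\overline{\theta}_i^\delta(t)\geq \eta/2$ on $[0,T']$ for all sufficiently small $\delta$, so $T_c^\delta\geq T'$. Sending $T'\to T_c$ proves~\eqref{xideltalim} and the liminf bound.

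The reverse inequality $\limsup_{\delta\to 0}T_c^\delta\leq T_c$ will be the heart of the matter. My plan is to exploit the strong singular attraction in~\eqref{thetaeq3} to quantitatively bound the life-span of $\overline{\theta}_\ast(t):=\min(\overline{\theta}_1^\delta(t),\overline{\theta}_2^\delta(t))$ once it becomes small. Letting $i$ attain the minimum of the two distances, and using $\overline{\theta}_1+\overline{\theta}_2\geq 2\overline{\theta}_\ast$ together with $\overline{\theta}_{3-i}\geq\overline{\theta}_\ast$, an elementary computation gives
\[
-\frac{1}{\overline{\theta}_i^{2s}}+\frac{1}{2(\overline{\theta}_1+\overline{\theta}_2)^{2s}}+\frac{1}{2\overline{\theta}_{3-i}^{2s}}\ \leq\ -\frac{c_0}{\overline{\theta}_\ast^{2s}},\qquad c_0:=\frac{1}{2}-\frac{1}{2^{2s+1}}>0.
\]
Since $\sigma$ and $\delta$ are bounded, the relevant line of~\eqref{thetaeq3} yields $\dot{\overline{\theta}}_i\leq -C_1\overline{\theta}_\ast^{-2s}+C_2$ along the branch realizing the minimum, with $C_1,C_2>0$ independent of $\delta\in(0,1]$. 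A standard Dini-derivative argument at points where $\overline{\theta}_1=\overline{\theta}_2$ then transfers this estimate to $\overline{\theta}_\ast$ and, for some $\eta_0,C_3>0$ and all $\overline{\theta}_\ast\leq\eta_0$, produces the a.e.\ differential inequality $\tfrac{d}{dt}\bigl(\overline{\theta}_\ast^{2s+1}\bigr)\leq -C_3$.

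To conclude, since $\min_i\theta_i(t)\to 0$ as $t\to T_c^-$, for every small $\eta\in(0,\eta_0/4)$ one can pick $t_\eta<T_c$ with $\min_i\theta_i(t_\eta)\leq\eta$ and $t_\eta\to T_c$ as $\eta\to 0$. By the uniform convergence established above, $\overline{\theta}_\ast(t_\eta)\leq 2\eta<\eta_0$ for all sufficiently small $\delta$; integrating the differential inequality bounds the residual life-span of $\overline{\theta}_\ast$ by $C\eta^{2s+1}$, so $T_c^\delta\leq t_\eta+C\eta^{2s+1}$. Sending first $\delta\to 0$ and then $\eta\to 0$ yields~\eqref{Tcdeltalim3}. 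The principal obstacle relative to the two-particle setting is precisely this coupling: a priori the attractive and repulsive contributions coming from different pairs might partially cancel, and ensuring that the minimum of the two distances contracts at a rate uniform in $\delta$ in the small-distance regime truly relies on the elementary but crucial inequality $1/2^{2s+1}<1/2$ valid for $s>0$, together with the Dini-derivative handling of the switching of $\overline{\theta}_\ast$ between the two branches.
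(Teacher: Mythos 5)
Your proposal is correct, but for the crucial inequality $\limsup_{\delta\to0}T_c^\delta\le T_c$ it takes a genuinely different route from the one in Section~\ref{thetaeppropsecproof}. The paper argues by contradiction: if $T_c^{\delta_k}\to T_c+2a$ with $a>0$, the perturbed trajectories stay uniformly separated on $[0,T_c+a]$, so by compactness and continuous dependence they converge to a solution of \eqref{dynamicalsys3} that survives past $T_c$ with separated particles; uniqueness then forces this limit to coincide with $(x_1,x_2,x_3)$ before $T_c$, contradicting the collision at $T_c$. You instead prove a quantitative contraction: with $\overline{\theta}_\ast=\min(\overline{\theta}_1,\overline{\theta}_2)$, the branch of \eqref{thetaeq3} realizing the minimum satisfies $-\overline{\theta}_i^{-2s}+\tfrac12(\overline{\theta}_1+\overline{\theta}_2)^{-2s}+\tfrac12\overline{\theta}_{3-i}^{-2s}\le -(\tfrac12-2^{-2s-1})\overline{\theta}_\ast^{-2s}$ (I checked the constant; it is positive precisely because $s>0$), whence $\tfrac{d}{dt}(\overline{\theta}_\ast^{2s+1})\le -C_3$ in the Dini sense once $\overline{\theta}_\ast$ is small, uniformly in $\delta\le1$; combined with the Gronwall step this bounds the residual life-span by $C\eta^{2s+1}$. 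Your Dini-derivative manipulation of the switching minimum is exactly the device the paper itself uses later, in the proof of Proposition~\ref{holderthetaprop3} (there only to get a two-sided Lipschitz bound on $\overline{\theta}_m^{2s+1}$, not a strictly negative upper bound), so the ingredients are all legitimate. What your approach buys is an explicit rate, $T_c^\delta\le t_\eta+C\eta^{2s+1}$, and it sidesteps the paper's assertion of a $k$-independent separation constant $M_a$ in the compactness step; what it costs is that it relies on the net interaction being attractive near collision (the inequality $2^{-2s-1}<\tfrac12$), whereas the paper's soft argument only needs uniqueness of the limiting ODE and would survive in settings where no such sign condition is available. Your treatment of \eqref{xideltalim} and of the liminf bound via continuous dependence coincides with the paper's.
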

The proof of Proposition \ref{tclimprop3} is postponed to Section~\ref{thetaeppropsecproof}
(notice that Proposition~\ref{tclimprop3} is the generalization
of Proposition~\ref{tclimprop2} to the case of three particles:
we kept the two statement separate for the sake of clarity,
but the proof will consider both the cases at the same time).

Now we show that the minimum between~$\overline{\theta}_1$
and~$\overline{\theta}_2$ is H\"older continuous:

\begin{prop}\label{holderthetaprop3} Let  $(\xs_1,\xs_2,\xs_3)$ be the solution to  system  \eqref{dynamicalsysbar3} and $(\overline{\theta}_1,\overline{\theta}_2)$ given by
\eqref{thetabar3}. Then, for any $0\leq\delta\leq 1$ the function $
\min\{ \overline{\theta}_1, \overline{\theta}_2\}$  is H\"{o}lder continuous in $[0,T_c^\delta]$ with H\"{o}lder constant uniform in $\delta$.
\end{prop}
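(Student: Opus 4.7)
The plan is to mirror the strategy used in Proposition \ref{holderthetaprop}: pass to a $(2s+1)$-th power to ``desingularise'' the $\theta^{-2s}$ terms in \eqref{thetaeq3}, establish Lipschitz continuity of that power, and then invoke Lemma \ref{lemmaholder} with $\beta=2s+1$. Before doing so I first need a uniform-in-$\delta$ upper bound for $\overline{\theta}_1$ and $\overline{\theta}_2$. For this I would look at $L(t):=\overline{\theta}_1(t)+\overline{\theta}_2(t)=\xs_3(t)-\xs_1(t)$; subtracting the equations for $\dot{\xs}_3$ and $\dot{\xs}_1$ from \eqref{dynamicalsysbar3} gives
\beqs
\dot{L}=\frac{\gamma}{s}\!\left(\frac{1}{L^{2s}}-\frac{1}{2\overline{\theta}_1^{2s}}-\frac{1}{2\overline{\theta}_2^{2s}}\right)+\gamma\big(\sigma(t,\xs_1)-\sigma(t,\xs_3)\big).
\eeqs
Since $L\ge\max\{\overline{\theta}_1,\overline{\theta}_2\}$, the bracket is $\leq 0$, hence $\dot{L}\leq 2\gamma\|\sigma\|_\infty$. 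Combined with \eqref{Tcdeltalim3} this yields $\overline{\theta}_i(t)\leq L(t)\leq C_0$ on $[0,T_c^\delta]$, uniformly in $\delta\in[0,1]$.

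Next I would set $\upsilon_i:=\overline{\theta}_i^{2s+1}$ and $\mu:=\min\{\upsilon_1,\upsilon_2\}$. Since $t\mapsto t^{2s+1}$ is increasing on $[0,\infty)$, $\mu=\big(\min\{\overline{\theta}_1,\overline{\theta}_2\}\big)^{2s+1}$. Multiplying \eqref{thetaeq3} by $(2s+1)\overline{\theta}_i^{2s}$ gives
\beqs
\dot{\upsilon}_i=\frac{\gamma(2s+1)}{s}\!\left[-1+\frac{\overline{\theta}_i^{2s}}{2(\overline{\theta}_1+\overline{\theta}_2)^{2s}}+\frac{1}{2}\!\left(\frac{\overline{\theta}_i}{\overline{\theta}_j}\right)^{\!2s}+s\,\overline{\theta}_i^{2s}\big(\sigma(t,x_i)+\sigma(t,x_k)+2\delta\big)\right],
\eeqs
with $\{i,j,k\}$ a permutation of $\{1,2,3\}$ read off from \eqref{thetaeq3}. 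On the set $\{\overline{\theta}_i\leq\overline{\theta}_j\}$ the first bracketed term is bounded by $1$, the second by $1/2$, the third by $1/2$, and the last by $sC_0^{2s}(\|\sigma\|_\infty+2)$; hence $|\dot{\upsilon}_i|\leq L_0$ uniformly in $\delta$, whenever $\overline{\theta}_i$ is the minimum.

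Finally I would show $\mu$ is $L_0$-Lipschitz on $[0,T_c^\delta]$. At any $t_0$ with $\upsilon_1(t_0)<\upsilon_2(t_0)$ one has $\mu\equiv\upsilon_1$ near $t_0$ and $\overline{\theta}_1<\overline{\theta}_2$, so $|\dot\mu(t_0)|=|\dot{\upsilon}_1(t_0)|\leq L_0$, and symmetrically for $\upsilon_2(t_0)<\upsilon_1(t_0)$. At an equality point $\upsilon_1(t_0)=\upsilon_2(t_0)$ both $\overline{\theta}_i$ are simultaneously the minimum, hence both $|\dot{\upsilon}_i(t_0)|\leq L_0$; the elementary inequality $\min\{\dot{\upsilon}_1(t_0),\dot{\upsilon}_2(t_0)\}\leq D_-\mu(t_0)\leq D^+\mu(t_0)\leq\min\{\dot{\upsilon}_1(t_0),\dot{\upsilon}_2(t_0)\}$ then gives $|D^\pm\mu(t_0)|\leq L_0$ as well. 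Since $\mu$ is continuous with Dini derivatives bounded everywhere by $L_0$, it is $L_0$-Lipschitz, and Lemma \ref{lemmaholder} applied with $\beta=2s+1$, $g=\mu$, $f=\min\{\overline{\theta}_1,\overline{\theta}_2\}$ concludes the proof with a Hölder exponent $\alpha=1/(2s+1)$ and constant depending only on $L_0$, $C_0$, $\gamma$, $s$ and $\|\sigma\|_\infty$, hence uniform in $\delta\in[0,1]$.

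The main obstacle is the last step: individually, neither $\upsilon_1$ nor $\upsilon_2$ is Lipschitz, because $\dot{\upsilon}_i$ contains the factor $(\overline{\theta}_i/\overline{\theta}_j)^{2s}$ which blows up near the collision precisely when $\overline{\theta}_j$ (the \emph{other} gap) becomes small, and this is unavoidable in the coupled three-particle setting. The regularity survives only for the pointwise minimum, so the Dini-derivative bookkeeping at the crossover set $\{\overline{\theta}_1=\overline{\theta}_2\}$ is the technical heart of the argument.
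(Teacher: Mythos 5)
Your proposal is correct and follows essentially the same route as the paper: a uniform bound on $\overline{\theta}_1+\overline{\theta}_2$ via the cancellation of the singular terms, the observation that the factor $\overline{\theta}_j^{-2s}$ is tamed once multiplied by $\overline{\theta}_i^{2s}$ with $\overline{\theta}_i$ the minimum, Lipschitz continuity of $(\min\{\overline{\theta}_1,\overline{\theta}_2\})^{2s+1}$, and then Lemma \ref{lemmaholder}. The only (immaterial) difference is bookkeeping: the paper bounds the increments of $\overline{\theta}_m=\min_i\overline{\theta}_i$ directly and then applies the mean value theorem to $t\mapsto t^{2s+1}$ in the difference quotient, whereas you raise each $\overline{\theta}_i$ to the power first and control the Dini derivatives of the minimum of the two powers.
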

\begin{proof}
First remark that $\overline{\theta}_1$ and $\overline{\theta}_2$ are uniformly bounded in $[0,T_c^\delta]$. Indeed, by \eqref{Tcdeltalim3} there exists $T>0$ independent of $\delta$ such that $T_c^\delta\leq T$. Moreover, by \eqref{thetaeq3}
\beqs\begin{split}\dot{\overline{\theta}}_1+\dot{\overline{\theta}}_2&=
\displaystyle\frac{\gamma}{s}\left(-\displaystyle\frac{1}{2\overline{\theta}_{1}^{2s}}+\displaystyle\frac{1}{2(\overline{\theta}_{1}+\overline{\theta}_{2})^{2s}}-
\displaystyle\frac{1}{2\overline{\theta}_{2}^{2s}}+\displaystyle\frac{1}{2(\overline{\theta}_{1}+\overline{\theta}_{2})^{2s}}+s\sigma(t,x_1)-s\sigma(t,x_3)\right)\\&
\leq 2\gamma\|\sigma\|_\infty.
\end{split}\eeqs
Therefore
\beqs0\leq \overline{\theta}_1+\overline{\theta}_2\leq  \theta_{1}^0+\theta_{2}^0+ 2\gamma\|\sigma\|_\infty T_c^\delta\leq  \theta_{1}^0+\theta_{2}^0+2\gamma\|\sigma\|_\infty T.\eeqs
Next, let us denote 
$$\overline{\theta}_m(t):=\min_{i=1,2}\overline{\theta}_i(t)=\overline{\theta}_{i(t)}(t).$$ Then from \eqref{thetaeq3} we infer that 
\beqs\begin{split} \overline{\theta}_m(t+h)-\overline{\theta}_m(t)&=\overline{\theta}_{i(t+h)}(t+h)-\overline{\theta}_{i(t)}(t)\leq \overline{\theta}_{i(t)}(t+h)-\overline{\theta}_{i(t)}(t)
\\&\leq C\int_t^{t+h}\left(\frac{1}{\overline{\theta}_m^{2s}(\tau)}+1\right)d\tau
\end{split}\eeqs
and 
\beqs \begin{split}\overline{\theta}_m(t+h)-\overline{\theta}_m(t)&=\overline{\theta}_{i(t+h)}(t+h)-\overline{\theta}_{i(t)}(t)\geq \overline{\theta}_{i(t+h)}(t+h)-\overline{\theta}_{i(t+h)}(t)
\\&\geq -C\int_t^{t+h}\left(\frac{1}{\overline{\theta}_m^{2s}(\tau)}+1\right)d\tau.
\end{split}\eeqs
Now, let us denote
$$\upsilon:=(\overline{\theta}_m)^{2s+1}.$$
Then we have
\beqs\begin{split} \left|\frac{\upsilon(t+h)-\upsilon(t)}{h}\right|&=(2s+1)(\xi_h\overline{\theta}_m(t+h)+(1-\xi_h)\overline{\theta}_m(t))^{2s}\left|\frac{\overline{\theta}_m(t+h)-\overline{\theta}_m(t)}{h}\right|\\&
\leq (2s+1)(\xi_h\overline{\theta}_m(t+h)+(1-\xi_h)\overline{\theta}_m(t))^{2s}\frac{C}{h}\int_t^{t+h}\left(\frac{1}{\overline{\theta}_m^{2s}(\tau)}+1\right)d\tau,
\end{split}\eeqs
for some $\xi_h\in[0,1]$.
Passing to the limit as $h\to 0$, we get 
\beqs \limsup_{h\to 0}\left|\frac{\upsilon(t+h)-\upsilon(t)}{h}\right|\leq C,\eeqs
i.e. the function $\upsilon$ is Lipschitz continuous in $[0,T_c^\delta]$ uniformly in $\delta$. The conclusion of the proposition then follows from  Lemma \ref{lemmaholder}.
\end{proof}

\noindent Next,  we set 
\beq\label{xbarpunto3}\cs_i(t):= \dot{\xs}_i(t),\quad i=1,2,3\eeq
and
\beq\label{barsigma3bars}
\overline{\sigma}:=\displaystyle\frac{\sigma+\delta}{W''(0)}.\eeq

\noindent Let $u$ and $\psi$ be  respectively the solution of \eqref{u} and \eqref{psi}. We define
\beq\label{vepansbar3}\begin{split}\vs_\ep(t,x)&:=\ep^{2s}\overline{\sigma}(t,x)+\sum_{i=1}^3 u\left(\displaystyle\zeta_i\frac{x-\xs_i(t)}{\ep}\right)-1
-\sum_{i=1}^3\zeta_i\ep^{2s}\cs_i(t)\psi\left(\displaystyle\zeta_i\frac{x-\xs_i(t)}{\ep}\right).\end{split}
\eeq
Under the appropriate choice of the parameters,
the function $\vs_\ep$ is a supersolution of \eqref{vepeq}-\eqref{vep03}, as next results point out:

\begin{prop}\label{thetaeprop3}  There exist $\ep_0>0$ and $\theta_\ep,\,\delta_\ep>0$ with 
\beqs \theta_\ep,\,\delta_\ep,\,\ep\theta_\ep^{-1}=o(1)\quad\text{as }\ep\to0\eeqs
such that for any $\ep<\ep_0$, if $(\xs_1,\xs_2,\xs_3)$ is a solution of the 
ODE system in \eqref{dynamicalsysbar3} with $\delta\ge\delta_\ep$, then the  function $\vs_\ep$ defined in \eqref{vepansbar3} satisfies
$$\ep(\vs_\ep)_t-\I \vs_\ep+\displaystyle\frac{1}{\ep^{2s}}W'(\vs_\ep)-\sigma\geq 0$$
for any $x\in\R$ and  any $t\in(0,T_c^\delta)$ such that $\xs_{i+1}(t)-\xs_i(t)\geq \theta_\ep$ for $i=1,2$.
\end{prop}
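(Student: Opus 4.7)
The plan is to substitute the ansatz \eqref{vepansbar3} into the operator $\mathcal{L}\vs_\ep := \ep(\vs_\ep)_t - \I\vs_\ep + \ep^{-2s}W'(\vs_\ep) - \sigma$ and to show, via a careful asymptotic expansion, that the ODE system \eqref{dynamicalsysbar3} is precisely the solvability condition that kills the leading-order terms, while the $\delta$-perturbation introduces a strictly positive residual of order $\delta$ that dominates every $\ep$-error, provided $\theta_\ep$ and $\delta_\ep$ are chosen appropriately. Since this is the three-particle analogue of Proposition \ref{thetaeprop}, I would carry out the two computations simultaneously in Section \ref{thetaeppropsecproof}, as announced.

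Writing $\xi_i(t,x) := \zeta_i(x-\xs_i(t))/\ep$, direct differentiation of \eqref{vepansbar3} yields
$$\ep(\vs_\ep)_t = -\sum_{i=1}^3 \zeta_i \cs_i u'(\xi_i) + \ep^{2s+1}\overline{\sigma}_t + \sum_{i=1}^3\ep^{2s}\cs_i^2\psi'(\xi_i) - \sum_{i=1}^3 \zeta_i\ep^{2s+1}\dot{\cs}_i\psi(\xi_i).$$
Since $\I$ is translation-invariant, insensitive to the reflection $\zeta_i=\pm 1$, and scales as $\ep^{-2s}$, and since $u$ solves \eqref{u} and $\psi$ solves \eqref{psi},
$$\I\vs_\ep = \ep^{-2s}\sum_{i=1}^3 W'(u(\xi_i)) - \sum_{i=1}^3\zeta_i\cs_i\bigl[u'(\xi_i)+W''(u(\xi_i))\psi(\xi_i)+\eta(W''(u(\xi_i))-W''(0))\bigr] + \ep^{2s}\I\overline{\sigma}.$$
Using the $1$-periodicity of $W$, I would Taylor-expand $\ep^{-2s}W'(\vs_\ep)$ around $\sum_i u(\xi_i) - 1$ (shifted by an appropriate integer in each spatial region, so that only one of the $u(\xi_j)$ is genuinely varying while the others are close to $0$ or $1$), which produces $\ep^{-2s}\sum_i W'(u(\xi_i))$, a linear contribution $\sum_i W''(u(\xi_i))[\overline{\sigma}-\zeta_i\cs_i\psi(\xi_i)]$, and quadratic interaction errors coming from off-diagonal overlaps. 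The $\ep^{-2s}\sum_i W'(u(\xi_i))$ pieces cancel between $\ep^{-2s}W'(\vs_\ep)$ and $\I\vs_\ep$, and after plugging in $\overline{\sigma} = (\sigma+\delta)/W''(0)$ and the corrector identity \eqref{psi}, the residual near $x\approx \xs_i$ collapses to
$$-\zeta_i u'(\xi_i)\biggl[\cs_i - \gamma\Bigl(\sum_{j\neq i}\zeta_i\zeta_j\frac{\xs_i-\xs_j}{2s|\xs_i-\xs_j|^{1+2s}} - \zeta_i\sigma(t,\xs_i) - \zeta_i\delta\Bigr)\biggr] + \delta\gamma\, u'(\xi_i),$$
where the long-range forces arise by applying the asymptotic \eqref{uinfinity} to $u(\xi_j)$ for $j\neq i$, valid because $|\xs_i-\xs_j|/\ep \ge \theta_\ep/\ep\to\infty$. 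The bracket vanishes by \eqref{dynamicalsysbar3}, leaving the leading-order contribution $\delta\gamma\sum_i u'(\xi_i)\ge 0$ to $\mathcal{L}\vs_\ep$.

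It remains to dominate three families of errors by this $\delta$-gain: (i) $O(\ep^{2s+1})$ terms from $\overline{\sigma}_t$, $\dot{\cs}_i$ and $\cs_i^2\psi'(\xi_i)$, controlled by \eqref{sigmaassump} together with the bounds $|\cs_i|\lesssim \theta_\ep^{-2s}$ and $|\dot{\cs}_i|\lesssim \theta_\ep^{-4s-1}$ coming from \eqref{dynamicalsysbar3} in the region $\min_i(\xs_{i+1}-\xs_i)\ge\theta_\ep$; (ii) tail-truncation errors of order $\ep^{\kappa}\theta_\ep^{-\kappa}$ from replacing each $u(\xi_j)$ by its expansion \eqref{uinfinity}; (iii) quadratic overlap errors of order $\ep^{2s}\theta_\ep^{-2s}$ from the Taylor expansion of $W'$, estimated via \eqref{u'infinity} and \eqref{psi'infty}. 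The main obstacle is the bookkeeping: one must choose $\theta_\ep\to 0$ slowly enough that $\ep\theta_\ep^{-1}\to 0$ (so that the layers are well-separated at the $\ep$-scale and the expansion \eqref{uinfinity} is applicable) yet fast enough that each of the above errors is $o(\delta_\ep)$, and then pick $\delta_\ep$ so that $\delta_\ep\gamma\inf_i u'(\xi_i)$ absorbs every error uniformly over admissible configurations and uniformly for $\delta\ge\delta_\ep$. The three-particle case differs from the two-particle case only in that the nonlinear expansion must be organized in three dominant regions rather than two, but the analytic mechanism and the resulting conditions on $\theta_\ep$ and $\delta_\ep$ are structurally identical.
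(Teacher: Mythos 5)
Your overall strategy (substitute the ansatz, expand, let the ODE kill the resonant term, let the $\delta$-perturbation supply a positive margin absorbing all errors) is indeed the paper's strategy, but you have misidentified both of the key cancellations, and one of the two errors is fatal to the plan as written. With the corrector $\psi$ of \eqref{psi} included in the ansatz, the $u'$-terms cancel \emph{identically}: $\ep(\vs_\ep)_t$ contributes $-\sum_i\zeta_i\cs_i u'(\xi_i)$ and $-\I\vs_\ep$ contributes $+\sum_i\zeta_i\cs_i u'(\xi_i)$ (the latter from $\I\psi=W''(u)\psi+u'+\eta(W''(u)-W''(0))$), regardless of whether $\cs_i$ solves any ODE. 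So the residual near $\xs_i$ does not collapse to $-\zeta_i u'(\xi_i)\bigl[\cs_i-\gamma(\cdots)\bigr]$. The solvability condition \eqref{dynamicalsysbar3} is instead used to kill the coefficient of $W''(u(\xi_i))-W''(0)=O\bigl(u(\xi_i)-H(\xi_i)\bigr)$, namely the quantity $\ep^{-2s}\sum_{j\ne i}\bigl(u(\xi_j)-H(\xi_j)\bigr)+\overline\sigma+\zeta_i\cs_i\eta$, which via \eqref{uinfinity} and the ODE reduces to $(\sigma(t,x)-\sigma(t,\xs_i))/W''(0)=O(|x-\xs_i|)$; this is exactly where the identity $\eta=1/(\gamma\beta)$ is needed. (As a side remark, you wrote $-\zeta_i\delta$ inside the bracket \emph{and} a leftover $+\delta\gamma u'$ outside, which double-counts $\delta$.)

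The serious gap is the source of positivity. The term $\delta\gamma\sum_i u'(\xi_i)$ tends to $0$ as $|x|\to\infty$, and is already $O(\ep^{1+2s}\theta_\ep^{-1-2s})$ once $x$ is at distance of order $\theta_\ep$ from all three particles; by contrast, several of the errors you list, e.g. $\ep^{2s}\I\overline\sigma$, $\ep^{2s+1}\overline\sigma_t$ and the $O(\ep^{2s})$ quadratic remainder of the Taylor expansion of $W'$, are $o(1)$ but do \emph{not} decay in $x$. Hence choosing $\delta_\ep$ so that ``$\delta_\ep\gamma\inf_i u'(\xi_i)$ absorbs every error uniformly'' is impossible: the infimum over $x\in\R$ is zero. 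The actual positive margin comes from the zeroth-order piece $\ep^{2s}\overline\sigma$ of the ansatz: feeding it through the linearization $\ep^{-2s}W''(\cdot)$ produces $W''(0)\overline\sigma=\sigma+\delta$ up to $O(u(\xi_i)-H(\xi_i))$ corrections, and subtracting the forcing $\sigma$ leaves the \emph{constant} $+\delta$, uniform in $x$, which is what dominates all the $o(1)$ errors for $\delta\ge\delta_\ep$ with $\delta_\ep\to0$ suitably slowly. Without this correction your argument breaks down in the region far from all the particles.
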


\begin{lem}\label{initialcondprop3}  Let $v_\ep^0(x)$ be defined by \eqref{vep03}. Then there exists $\ep_0>0$ such that for any $\ep<\ep_0$ and $\delta_\ep$ given by Proposition \ref{thetaeprop3}, if $(\xs_1,\xs_2,\xs_3)$ is the solution to system  \eqref{dynamicalsysbar3} with $\delta=\delta_\ep$, then  
the function $\vs_\ep$ defined in \eqref{vepansbar3} satisfies
$$v_\ep^0(x)\le \vs_\ep(0,x)\quad\text{for any }x\in\R.$$
\end{lem}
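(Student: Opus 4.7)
The plan is to compute the difference $\vs_\ep(0,x)-v_\ep^0(x)$ explicitly from \eqref{vep03}, \eqref{vepansbar3} and the perturbed initial data $\xs_i(0)=x_i^0-\zeta_i\delta_\ep$ of system \eqref{dynamicalsysbar3}. Because $\zeta_i^2=1$, the shift of centers translates the argument of every basic layer $u$ by $+\delta_\ep/\ep$, regardless of the orientation $\zeta_i$; together with the identity $\overline{\sigma}(0,x)-\sigma(0,x)/\beta=\delta_\ep/\beta$ this produces the pointwise decomposition
\begin{equation*}
\vs_\ep(0,x)-v_\ep^0(x)=\frac{\ep^{2s}\delta_\ep}{\beta}+\sum_{i=1}^{3}\!\Big[u\!\Big(\zeta_i\tfrac{x-x_i^0}{\ep}+\tfrac{\delta_\ep}{\ep}\Big)-u\!\Big(\zeta_i\tfrac{x-x_i^0}{\ep}\Big)\Big]-\ep^{2s}\sum_{i=1}^{3}\zeta_i\cs_i(0)\,\psi\!\Big(\zeta_i\tfrac{x-x_i^0}{\ep}+\tfrac{\delta_\ep}{\ep}\Big).
\end{equation*}
The constant $A:=\ep^{2s}\delta_\ep/\beta$ is strictly positive, and each layer shift $B_i$ is nonnegative by the strict monotonicity $u'>0$ from \eqref{u}, so the task reduces to absorbing the signed corrector sum $C(x)$.

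Since the initial particles $x_1^0<x_2^0<x_3^0$ are well separated, the right-hand side of \eqref{dynamicalsysbar3} is uniformly continuous in $\delta$ near $\delta=0$; hence $\max_i|\cs_i(0)|$ is bounded uniformly for $\delta\in[0,1]$. Integrating \eqref{psi'infty} yields $|\psi(y)|\le C/(1+|y|^{2s})$, so
\begin{equation*}
|C(x)|\le C\,\ep^{2s}\sum_{i=1}^{3}\frac{1}{1+|\zeta_i(x-x_i^0)/\ep+\delta_\ep/\ep|^{2s}}.
\end{equation*}
From here I would split into two regimes according to $d(x):=\min_i|x-x_i^0|$. When $d(x)\ge R$ for some fixed $R>0$, each summand above is at most $C'\ep^{2s}/R^{2s}$, so $|C(x)|\le C''\ep^{4s}/R^{2s}$, which is absorbed by the constant $A$ as soon as $\delta_\ep\ge C_0\ep^{2s}$. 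When $d(x)\le R$, letting $i^*$ realize the minimum and invoking \eqref{u'infinity} through
\begin{equation*}
B_{i^*}(x)\ge c\,\frac{\delta_\ep/\ep}{1+(d(x)/\ep+\delta_\ep/\ep)^{2s+1}},
\end{equation*}
a short case analysis on the subregions $d(x)\le\ep$ and $\ep\le d(x)\le R$ shows that $A+B_{i^*}$ dominates $|C|$ pointwise, again under the same condition $\delta_\ep\ge C_0\ep^{2s}$.

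The main obstacle is coordinating the choice of $\delta_\ep$ with Proposition~\ref{thetaeprop3}: that proposition already imposes $\delta_\ep=o(1)$ for the supersolution bound, and one has to verify that the same parameter can be chosen to simultaneously satisfy the lower bound $\delta_\ep\gtrsim\ep^{2s}$ required here (something like $\delta_\ep=\ep^{s}$ works, being compatible with both constraints). Because Proposition~\ref{thetaeprop3} and Lemma~\ref{initialcondprop3} rely on the same pair $(\theta_\ep,\delta_\ep)$, this compatibility is naturally settled by proving them jointly in Section~\ref{thetaeppropsecproof}, where all the constants can be tracked at once.
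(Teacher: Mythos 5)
Your argument is correct and follows essentially the same strategy as the paper's: the paper omits this proof, referring to the proofs of Lemmata~\ref{vbarelessvtilteinitialtimelem} and~\ref{vbarelessvtilteinitialtimelem3}, which likewise split into cases according to the distance from the particles and absorb the corrector terms using the monotonicity of $u$ under the $\delta$-shift near the particles and the extra $\ep^{2s}\delta/\beta$ term in $\overline{\sigma}$ far from them. Your version merely makes explicit, via the decay estimates \eqref{u'infinity} and \eqref{psi'infty}, the polynomial lower bound $\delta_\ep\gtrsim\ep^{2s}$ that the paper leaves implicit in its choice of $\delta_\ep=o(1)$.
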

The proof of Proposition \ref{thetaeprop3} and Lemma \ref{initialcondprop3} is postponed to Section~\ref{thetaeppropsecproof}.
We observe that Proposition \ref{thetaeprop3} and Lemma \ref{initialcondprop3}
are the generalization, respectively, of Proposition~\ref{thetaeprop}
and Lemma \ref{initialcondprop2} to the case of three
particles (the proof presented
in Section~\ref{thetaeppropsecproof} will indeed work simultaneously
for the cases of two and three particles).

\noindent Now we consider the barrier function $\vs_\ep$ defined in \eqref{vepansbar3}, where  $(\xs_1,\xs_2,\xs_3)$ is the solution to  system  \eqref{dynamicalsysbar3} in which we fix
 $\delta=\delta_\ep$, with $\delta_\ep$ given by Proposition \ref{thetaeprop3}. 
 For $\ep$ small enough,  since $T_c^\delta$ is finite by \eqref{Tcdeltalim3}, there exists 
  $\overline{T}^1_\ep>0$ such that 
  \beqs \min_{i=1,2}\xs_{i+1}(\overline{T}^1_\ep)-\xs_i(\overline{T}^1_\ep)=\theta_\ep,\eeqs and
  $$\xs_{i+1}(t)-\xs_i(t)>\theta_\ep\quad\text{for any }t<\overline{T}^1_\ep,\quad i=1,2.$$
  Without loss of generality, we may assume
   \beq\label{xsep2-xsep1=thetaep3}\min_{i=1,2}\xs_{i+1}(\overline{T}^1_\ep)-\xs_i(\overline{T}^1_\ep)=\xs_2(\overline{T}^1_\ep)-\xs_1(\overline{T}^1_\ep)=\theta_\ep.\eeq 
From \eqref{dynamicalsysbar3}, \eqref{xbarpunto3} and \eqref{xsep2-xsep1=thetaep3}, we infer that 
 \beq\label{concmainhm33barbar}  |\cs_i(\overline{T}^1_\ep)|\leq C\theta_\ep^{-2s}.\eeq
\noindent  By Proposition \ref{thetaeprop3} and Lemma \ref{initialcondprop3},  the function~$\vs_\ep$ defined in \eqref{vepansbar3}, is a
supersolution of \eqref{vepeq}-\eqref{vep03} in $(0,\overline{T}^1_\ep)\times\R$, and the comparison principle implies 
 \beq\label{veplessvepbar3} v_\ep(t,x)\le \vs_\ep(t,x)\quad\text{for any }(t,x)\in [0,\overline{T}^1_\ep]\times\R.\eeq
Moreover,  since $\theta_\ep=o(1)$ as $\ep\to0$, as in Section \ref{proofmainthmbeforecoll}, from   Propositions \ref{tclimprop3} and \ref{holderthetaprop3}, we have
 \beq\label{T1elim3}\overline{T}^1_\ep=T_c+o(1)\quad\text{as }\ep\to 0.\eeq 
 
 Similarly, for $\delta>0$, one can define $(\underline{x}_1,\underline{x}_2,\underline{x}_3)$ to be the solution of the system 
 
 \beq\label{ADD S}\begin{cases} \dot{\underline{x}}_i=\gamma\left(
\displaystyle\sum_{j\neq i}\zeta_i\zeta_j 
\displaystyle\frac{\underline{x}_i-\underline{x}_j}{2s |\underline{x}_i-\underline{x}_j|^{1+2s}}-\zeta_i\sigma(t,\underline{x}_i)+\zeta_i\delta\right)
\\
 \underline{x}_i(0)=x_i^0+\zeta_i\delta,
\end{cases}\eeq
see Figure~7.
\bigskip\bigskip

\begin{center}
\includegraphics[width=0.95\textwidth]{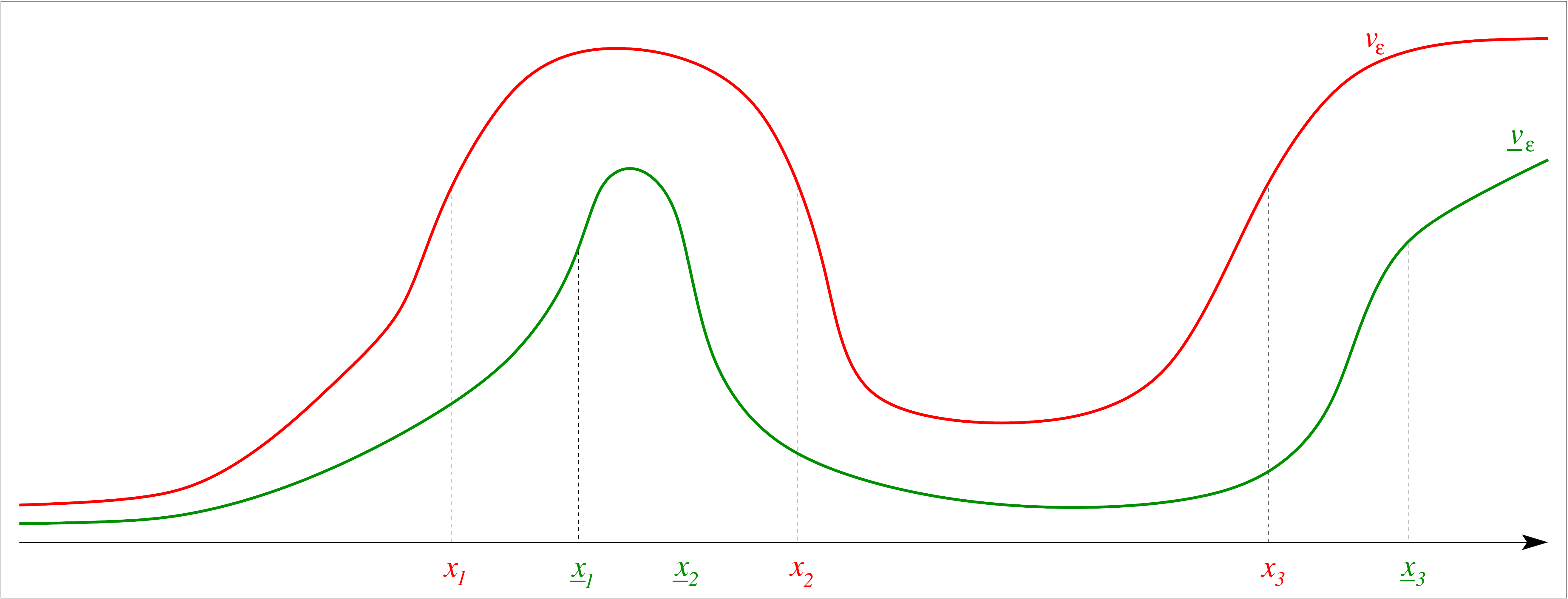}
{\footnotesize{\\
$\,$\\
Figure 7: The geometry involved in system \eqref{ADD S}.}}
\end{center}
\bigskip\bigskip

Let also
\beq\label{vepansbar3sotto}\begin{split}\underline{v}_\ep(t,x)&:=\ep^{2s}\frac{\sigma(t,x)-\delta}{W''(0)}+\sum_{i=1}^3 u\left(\displaystyle\zeta_i\frac{x-\underline{x}_i(t)}{\ep}\right)-1
-\sum_{i=1}^3\zeta_i\ep^{2s}\dot{\underline{x}}_i(t)\psi\left(\displaystyle\zeta_i\frac{x-\underline{x}_i(t)}{\ep}\right).\end{split}
\eeq
Then, one can prove that there exists $\delta_\ep=o(1)$ as $\ep\to0$ and $\underline{T}^2_\ep$ such that
\beqs \underline{T}^2_\ep=T_c+o(1)\quad\text{as }\ep\to0,\eeqs
\beqs \min_{i=1,2}\underline{x}_{i+1}(\underline{T}^2_\ep)-\underline{x}_i(\underline{T}^2_\ep)=\underline{x}_{2}(\underline{T}^2_\ep)-\underline{x}_1(\underline{T}^2_\ep)=\theta_\ep,\eeqs
 \beq\label{concmainhm33barbarunderline}  |\dot{\underline{x}}_i(\underline{T}^2_\ep)|\leq C\theta_\ep^{-2s}\eeq
 and 
 \beq\label{veplessvepbar3sotto} v_\ep(t,x)\ge \underline{v}_\ep(t,x)\quad\text{for any }(t,x)\in [0,\underline{T}^2_\ep]\times\R.\eeq
In what follows, we will denote  
 $$\xs_i^\ep:=\xs_i(\overline{T}^1_\ep),$$ and 
  $$\underline{x}_i^\ep:=\underline{x}_i(\underline{T}^2_\ep).$$
Roughly speaking, in this case,
the dislocation function will be the superposition
of three transition layers: the idea is now to deal separately
with the annihilation of two of them, by possibly
moving the transition point if necessary
(this adjustment of the transition point
uses the quantities~$\overline{x}_i^\ep$ and~$\underline{x}_i^\ep$
that we have just introduced). The formal statement goes as follows:

 \begin{lem}
 For any $x\in\R$ we have
 \beq\label{triplecolllem1} \sum_{i=2}^3 u\left(\zeta_i\displaystyle\frac{x-\xs_i^\ep}{\ep}\right)-1\leq C\ep^{2s}\theta_\ep^{-2s},\eeq and
  \beq\label{triplecolllem2} \sum_{i=1}^2 u\left(\zeta_i\displaystyle\frac{x-\underline{x}_i^\ep}{\ep}\right)-1\geq -C\ep^{2s}\theta_\ep^{-2s}.\eeq
  \end{lem}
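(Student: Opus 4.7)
The plan is to derive both bounds directly from the polynomial decay~\eqref{uinfinity} of the basic layer~$u$ towards its Heaviside limits, using the geometric information that the two particles relevant for each bound are spaced by at least~$\theta_\ep$, together with the fact that $\theta_\ep/\ep\to+\infty$ as~$\ep\to 0^+$ (which is ensured by Proposition~\ref{thetaeprop3}, through $\ep\theta_\ep^{-1}=o(1)$). The key elementary remark is that~\eqref{uinfinity}, combined with $\kappa>2s$, yields the one-sided tail estimates
\beqs
0\leq u(y)\leq C|y|^{-2s}\;\;\text{if }y\leq -1,\qquad 0\leq 1-u(y)\leq C|y|^{-2s}\;\;\text{if }y\geq 1,
\eeqs
for some constant~$C$ depending only on~$s$ and~$W$. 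Each of~\eqref{triplecolllem1} and~\eqref{triplecolllem2} will follow by splitting~$\R$ at the midpoint of the two relevant particles and applying the appropriate tail estimate to one of the two layers, while bounding the other trivially by~$1$, respectively by~$0$.

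For~\eqref{triplecolllem1}, recalling that $\zeta_2=-1$ and $\zeta_3=1$, the left hand side equals
\beqs
A(x):= u\left(\frac{\xs_2^\ep-x}{\ep}\right)+u\left(\frac{x-\xs_3^\ep}{\ep}\right)-1.
\eeqs
By~\eqref{xsep2-xsep1=thetaep3}, the minimal spacing at time~$\overline{T}^1_\ep$ is attained by the pair~$(\xs_1^\ep,\xs_2^\ep)$ and equals~$\theta_\ep$, so in particular $\xs_3^\ep-\xs_2^\ep\geq \theta_\ep$. Set $m:=(\xs_2^\ep+\xs_3^\ep)/2$. If~$x\leq m$, then $(x-\xs_3^\ep)/\ep\leq -\theta_\ep/(2\ep)\leq -1$ for~$\ep$ small enough, so the left tail estimate above gives $u((x-\xs_3^\ep)/\ep)\leq C\ep^{2s}\theta_\ep^{-2s}$; combined with $u\leq 1$, this yields $A(x)\leq C\ep^{2s}\theta_\ep^{-2s}$. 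The case~$x>m$ is symmetric, since now $(\xs_2^\ep-x)/\ep\leq -\theta_\ep/(2\ep)$, and the roles of the two layers are exchanged.

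For~\eqref{triplecolllem2}, the argument is analogous: using $\zeta_1=1$, $\zeta_2=-1$ together with $\underline{x}_2^\ep-\underline{x}_1^\ep=\theta_\ep$, we rewrite the left hand side as
\beqs
B(x):= u\left(\frac{x-\underline{x}_1^\ep}{\ep}\right)+u\left(\frac{\underline{x}_2^\ep-x}{\ep}\right)-1,
\eeqs
and split at $m':=(\underline{x}_1^\ep+\underline{x}_2^\ep)/2$. For~$x\leq m'$ the argument of the second layer is $\geq \theta_\ep/(2\ep)\geq 1$, so the right tail estimate gives $u\geq 1-C\ep^{2s}\theta_\ep^{-2s}$ on that argument; since $u\geq 0$ by the monotonicity and boundary conditions in~\eqref{u}, we conclude $B(x)\geq -C\ep^{2s}\theta_\ep^{-2s}$. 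The case~$x>m'$ is handled by exchanging the roles of the two layers. No substantive technical obstacle is expected beyond the sign bookkeeping dictated by the orientations~$\zeta_i$ and the correct choice of the midpoint; this is precisely the reason why~$\theta_\ep$ was tuned at a scale small but much larger than~$\ep$, so that the far-field expansion~\eqref{uinfinity} of~$u$ could be effectively exploited.
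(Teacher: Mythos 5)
Your proof is correct and follows essentially the same route as the paper: both arguments observe that, since the two relevant particles are at distance at least $\theta_\ep$, at every $x$ at least one of the two oriented arguments $\zeta_i(x-\xs_i^\ep)/\ep$ lies in the decaying tail of $u$ at scale $\theta_\ep/(2\ep)$, so that \eqref{uinfinity} controls that layer by $C\ep^{2s}\theta_\ep^{-2s}$ while the other is bounded trivially by $1$ (resp.\ $0$). The only cosmetic difference is that you split $\R$ at the midpoint of the two particles, whereas the paper splits according to whether $x$ is within $\theta_\ep/2$ of one of them; the two case distinctions yield exactly the same estimates.
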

  \begin{proof}
  Let us prove \eqref{triplecolllem1}. Let us first suppose that there exists $k=2,3$ such that 
  $$|x-\xs_k^\ep|\leq\frac{\theta_\ep}{2}.$$
  Then, from \eqref{xsep2-xsep1=thetaep3} for $j\neq k$
  $$\zeta_j(x-\xs_j^\ep)\leq -\frac{\theta_\ep}{2},$$ and estimate \eqref{uinfinity} implies
  $$u\left(\zeta_j\displaystyle\frac{x-\xs_j^\ep}{\ep}\right)\leq C\ep^{2s}\theta_\ep^{-2s}.$$ Therefore, we have
  $$\sum_{i=2}^3 u\left(\zeta_i\displaystyle\frac{x-\xs^\ep_i}{\ep}\right)-1\leq   u\left(\zeta_j\displaystyle\frac{x-\xs_j^\ep}{\ep}\right)\leq C\ep^{2s}\theta_\ep^{-2s}.$$
  
  Next, if  for any $i=2,3$
  $$|x-\xs_k^\ep|\geq\frac{\theta_\ep}{2},$$ then again estimate  \eqref{uinfinity}  implies \eqref{triplecolllem1}.
  Similarly one can prove \eqref{triplecolllem2}.
  \end{proof}
  
  From \eqref{veplessvepbar3sotto}, \eqref{triplecolllem2} and \eqref{concmainhm33barbarunderline},  we infer that 
  \beq\label{inethmcollbelow} v_\ep(\underline{T}^2_\ep,x)\geq u\left(\displaystyle\frac{x-\underline{x}_3^\ep}{\ep}\right)-C\ep^{2s}\theta_\ep^{-2s}\quad\text{for any }x\in\R,
  \eeq
which proves \eqref{vlesep+layerbelow} with 
$$T^2_\ep=\underline{T}^2_\ep,\quad z_\ep=\underline{x}_3^\ep,\quad\text{and}\quad\varrho_\ep=C\ep^{2s}\theta_\ep^{-2s}.$$
 
 Now, to prove \eqref{vlesep+layer},
let us divide  the proof  in two cases,
depending on whether we are in a simple or in triple collision.


 \subsection{Case 1: simple collision}\label{simplecollisionsub}
In this case (up to renaming the particles),
the first two particles gets to collision while the third one remains
far enough.
More precisely, let us suppose that 
 \beq\label{xstimet1ep3}\xs_2^\ep-\xs_1^\ep=\theta_\ep,\quad \xs_3^\ep-\xs_2^\ep\geq M\theta_\ep,\eeq
 with $M>2$ independent of $\ep$ to be determined. 
 Let us introduce the following further perturbed system, for $\dss>\delta_\ep$ and $1<K<M-1$:
 
 \beq\label{dynamicalsysbarbar3}\begin{cases} \dot{\xss}_i=\gamma\left(
\displaystyle\sum_{j\neq i}\zeta_i\zeta_j 
\displaystyle\frac{\xss_i-\xss_j}{2s |\xss_i-\xss_j|^{1+2s}}-\zeta_i\sigma(t,\xss_i)-\zeta_i\dss\right)&\text{in }(0,T^{\dss}_c)\\
 \xss_1(0)=\xs_1^\ep-\theta_\ep,\,\xss_2(0)=\xs_2^\ep+K\theta_\ep,\, \xss_3(0)=\xs_3^\ep-\theta_\ep,
\end{cases}\eeq
where $T^{\dss}_c$ is the collision time of the system \eqref{dynamicalsysbarbar3}.

Roughly speaking, the idea behind the system in~\eqref{dynamicalsysbarbar3}
is that, for simple collisions, one can adapt the technique introduced
in~\eqref{dynamicalsysbarbar2} for the case of two collisions.
That is, we can move the first particle slightly to the left
and the second particle slightly to the right.
As done in~\eqref{dynamicalsysbarbar2}, the right displacement
of the second particle, though small, is a large multiple
of the left displacement of the first particle
(this is needed to construct barriers from above).
Since, in this case, the third particle is far from the collision,
this construction leaves ``space enough'' to move the third
particle slightly to the left, without producing new
collisions in this procedure.
\medskip

Of course, the technical details in this case
are more complicated than in the case of two particles
and the notation becomes somehow heavier,
since it must comprise not only one additional particles,
but also the different orientations of the dislocations
involved.
So, to make the argument rigorous, we set 
\beq\label{xbarbarpunto3}\css_i(t):= \dot{\xss}_i(t),\quad i=1,2,3\eeq
and
\beq\label{barsigma3}
\hat{\sigma}:=\displaystyle\frac{\sigma+\dss}{W''(0)}.\eeq

\noindent We define
\beq\label{vepansbarbar3}\begin{split}\hat{v}_\ep(t,x)&:=\ep^{2s}\hat{\sigma}(t,x)+\sum_{i=1}^3 u\left(\displaystyle\zeta_i\frac{x-\xss_i(t)}{\ep}\right)-1
-\sum_{i=1}^3\zeta_i\ep^{2s}\css_i(t)\psi\left(\displaystyle\zeta_i\frac{x-\xss_i(t)}{\ep}\right),\end{split}
\eeq
where again $u$ and $\psi$ are  respectively the solution of \eqref{u} and \eqref{psi}. 
\begin{lem}\label{vbarelessvtilteinitialtimelem3} There exist $\ep_0,\,\dss_\ep>0$ with  $\delta_\ep<\dss_\ep= \delta_\ep+o(1)$ as $\ep\to0$, where $\delta_\ep$ is given by Proposition \ref{thetaeprop3}, such that  if  $(\xss_1,\xss_2,\xss_3)$ is the solution to  system  \eqref{dynamicalsysbarbar3} with $\dss=\dss_\ep$, then 
the function $\hat{v}_\ep$ defined in \eqref{vepansbarbar3} satisfies
\beqs\hat{v}_\ep(0,x)\ge \vs_\ep(\overline{T}_\ep^1,x)\quad\text{for any }x\in\R.\eeqs
\end{lem}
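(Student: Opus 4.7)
My plan is to verify the pointwise inequality $\hat v_\ep(0,x)\ge \vs_\ep(\overline T_\ep^1,x)$ for every $x\in\R$ by directly comparing the formulas \eqref{vepansbar3}, evaluated at $t=\overline T_\ep^1$, and \eqref{vepansbarbar3}, evaluated at $t=0$, splitting the difference into three blocks: (a) the external-stress contribution $\ep^{2s}(\hat\sigma(0,x)-\overline\sigma(\overline T_\ep^1,x))$, which produces a positive constant $\ep^{2s}(\dss_\ep-\delta_\ep)/W''(0)$ once $\dss_\ep>\delta_\ep$ (reading $\hat\sigma$, as in the two-particle template, implicitly time-shifted by $\overline T_\ep^1$ so that $\hat v_\ep$ serves as a legitimate comparison barrier); (b) the layer differences $\sum_{i=1}^3[u(\zeta_i(x-\xss_i(0))/\ep)-u(\zeta_i(x-\xs_i^\ep)/\ep)]$; and (c) the corrector differences coming from the $\psi$ terms, whose prefactors are the velocities $\cs_i(\overline T_\ep^1)$ and $\css_i(0)$.

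The key structural observation is that the initial data of \eqref{dynamicalsysbarbar3} are chosen, in the simple-collision case, precisely so that $\zeta_i(x-\xss_i(0))\ge \zeta_i(x-\xs_i^\ep)$ holds for every $x\in\R$ and every $i\in\{1,2,3\}$: $\xss_1(0)=\xs_1^\ep-\theta_\ep$ and $\xss_3(0)=\xs_3^\ep-\theta_\ep$ are shifted to the left (matching $\zeta_1=\zeta_3=1$), while $\xss_2(0)=\xs_2^\ep+K\theta_\ep$ is shifted to the right (matching $\zeta_2=-1$). By monotonicity of $u$, each of the three summands in (b) is then nonnegative pointwise in $x$.

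I would then quantify (b) region by region. In the collision zone $|x-\xs_i^\ep|\lesssim\theta_\ep$ for $i\in\{1,2\}$, the shifts by $\theta_\ep$ and $K\theta_\ep$ in arguments of size $O(\theta_\ep/\ep)$ yield, through the tail expansion \eqref{uinfinity}, a layer gain of order at least $c(\ep/\theta_\ep)^{2s}$. In the transition band $x\in(\xs_2^\ep,\xs_2^\ep+K\theta_\ep)$ the second layer jumps by an $O(1)$ positive amount, so (b) dominates trivially. Near the isolated particle $\xs_3^\ep$ the third shift again produces a substantial positive contribution, and in the far regions all three layer differences converge polynomially to zero with the correct sign. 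For block (c), the velocity bounds $|\cs_i(\overline T_\ep^1)|,|\css_i(0)|\le C\theta_\ep^{-2s}$ (which follow from \eqref{concmainhm33barbar} and the analogue for the hatted system via \eqref{dynamicalsysbarbar3} and \eqref{xbarbarpunto3}) combined with the decay $|\psi(\xi)|\le C/(1+|\xi|^{2s})$, obtained by integrating \eqref{psi'infty}, show that in the collision zone (c) is $O(\ep^{4s}/\theta_\ep^{4s})$, strictly negligible with respect to the collision-zone gain of (b); away from the collision zone (c) is either even smaller or dominated by the corresponding layer gain.

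To close, I would set $\dss_\ep:=\delta_\ep+\rho_\ep$ with a slowly vanishing $\rho_\ep$, so that the positive term $\ep^{2s}\rho_\ep/W''(0)$ in (a) safely absorbs any residual error from the above case analysis, while Proposition \ref{tclimprop3}, applied to the hatted system with initial data perturbed only by $\theta_\ep=o(1)$ in addition to $\delta_\ep$, guarantees both $\dss_\ep=\delta_\ep+o(1)$ and that \eqref{dynamicalsysbarbar3} defines a genuine ODE trajectory at least on $[0,t_\ep]$. The main obstacle is the collision zone, where the layer gain in \eqref{uinfinity} and the corrector loss coming from \eqref{psi'infty} both live at the nominal scale $(\ep/\theta_\ep)^{2s}$; making the former strictly beat the latter relies on the largeness of $K$ fixed in Lemma \ref{teplem}, which enhances the second-layer shift while pushing the $\psi$ arguments further into their decay regime, exactly as in the two-particle analogue Lemma \ref{vbarelessvtilteinitialtimelem}.
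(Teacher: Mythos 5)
Your proposal follows essentially the same route as the paper's proof: the pointwise ordering of the layer terms via monotonicity of $u$ and the sign-matched shifts of the initial data in \eqref{dynamicalsysbarbar3}, a case analysis separating points near a particle (where the layer gain dominates the corrector errors, which are $O(\ep^{2s}\theta_\ep^{-2s})$ by the velocity bounds) from points far from all particles (where the correctors are $o(\ep^{2s})$ and the residual is absorbed by choosing $\dss_\ep=\delta_\ep+o(1)$). Two intermediate claims are imprecise, though not fatal: first, block (c) is not $O(\ep^{4s}\theta_\ep^{-4s})$ uniformly on the collision zone --- at points within $O(\ep)$ of a particle center the relevant $\psi$ argument is $O(1)$ and the corrector is only $O(\ep^{2s}\theta_\ep^{-2s})$, but there the layer gain is $O(1)$, which is exactly why the paper splits cases at an intermediate scale $\ep^\alpha$ with $\ep\ll\ep^\alpha\ll\theta_\ep$ rather than at $\theta_\ep$; second, the domination of the layer gain over the corrector loss does not rely on the largeness of $K$ (the shifts by $\theta_\ep$ already suffice, and $K$ is fixed in Lemma \ref{teplem3} for the separate purpose of controlling $t_\ep$), but on the scale separation $\ep^{2s(1-\alpha)}\gg\ep^{2s}\theta_\ep^{-2s}$.
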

The proof of  Lemma \ref{vbarelessvtilteinitialtimelem3} is postponed to
Section~\ref{thetaeppropsecproof}.
Using Lemma \ref{vbarelessvtilteinitialtimelem3},
we obtain the geometric consequences depicted in Figure~8
and formally described in the forthcoming Lemma~\ref{teplem3}.

\bigskip\bigskip

\begin{center}
\includegraphics[width=0.95\textwidth]{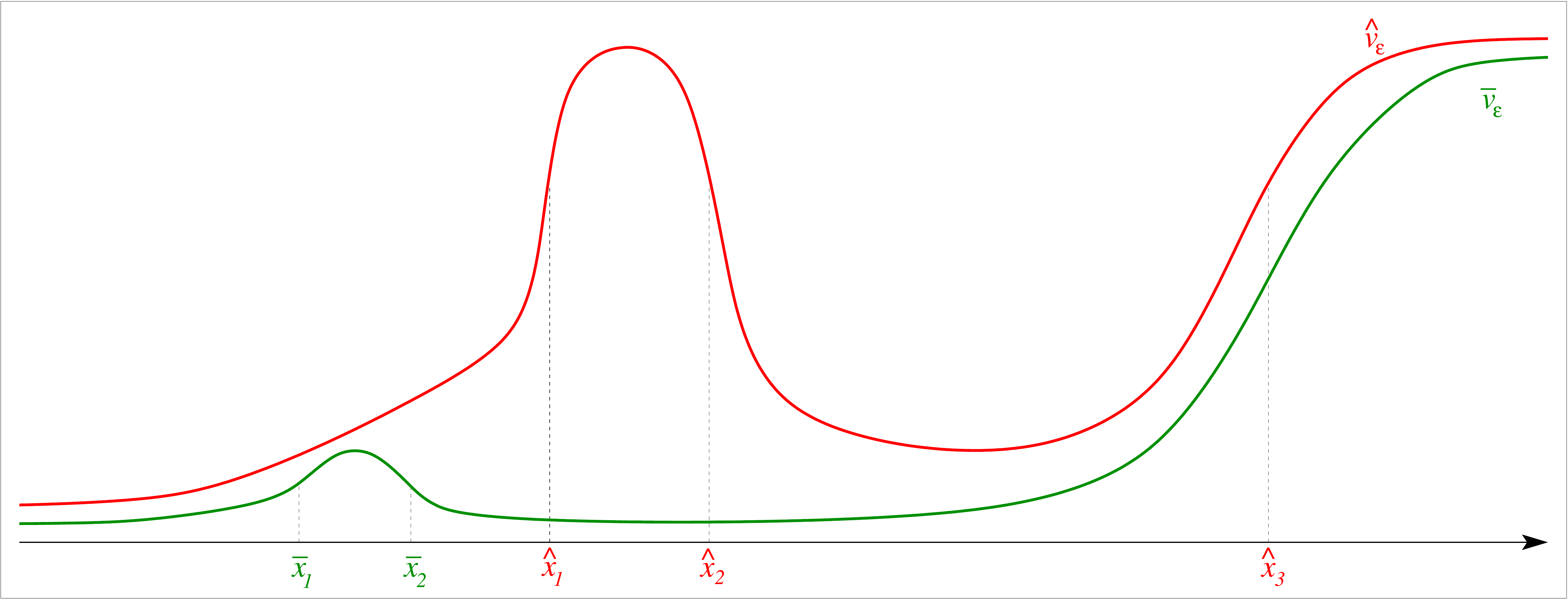}
{\footnotesize{\\
$\,$\\
Figure 8: The geometry involved in
Lemmata~\ref{vbarelessvtilteinitialtimelem3}
and~\ref{teplem3}.}}
\end{center}
\bigskip\bigskip

\begin{lem}\label{teplem3}Let
\beq\label{tep3} t_\ep:=\frac{2^{2s+2}s(K+2)^{2s}\theta_\ep^{2s+1}}{\gamma[2^{2s}-1-2s^{2s+1}(K+2)^{2s}\theta_\ep^{2s}(\|\sigma\|_\infty+\dss)]}.\eeq
Then there exist $K,M>1$  and $\ep_0>0$ such that for any $\ep<\ep_0$ the solution $(\xss_1,\xss_2,\xss_3)$  to  system  \eqref{dynamicalsysbarbar3} satisfies
\beq\label{thh1decreasing}\xss_2(t)-\xss_1(t)\text{ is decreasing  for any }t>0,\eeq
\beq\label{xss1graterxs2lem3} \xss_1(t_\ep)\geq \xs_2^\ep,\eeq
and for any $t\in[0,t_\ep]$
\beq\label{xss2-xss1tep3}\xss_3(t)-\xss_2(t)\ge \xss_2(t)-\xss_1(t)\ge \theta_\ep.\eeq
\end{lem}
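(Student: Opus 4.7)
The plan is to follow the strategy of Lemma \ref{teplem}, treating the third particle as a lower-order perturbation controlled by the initial spacing. Setting $\thh_i(t):=\xss_{i+1}(t)-\xss_i(t)$ for $i=1,2$, the initial data give $\thh_1(0)=(K+2)\theta_\ep$ and, by \eqref{xstimet1ep3}, $\thh_2(0)\ge(M-K-1)\theta_\ep$. Choosing $M$ much larger than $K$, one starts with $\thh_2(0)\gg\thh_1(0)$, and the goal is to preserve this ordering throughout $[0,t_\ep]$, so that the system for $(\thh_1,\thh_2)$ reduces morally to the two-particle equation, with the third particle contributing only through small terms of the form $\tfrac{1}{\thh_2^{2s}}$ and $\tfrac{1}{(\thh_1+\thh_2)^{2s}}$.

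The argument is a bootstrap: let $t^*$ be the supremum of times on which $\thh_2\ge\thh_1$ and $\thh_1\ge\theta_\ep$ both hold; we will show $t^*\ge t_\ep$. \emph{First step}: writing the ODEs for $(\thh_1,\thh_2)$ derived from \eqref{dynamicalsysbarbar3}, a crude a priori bound $|\dot{\thh}_i|\le C\theta_\ep^{-2s}$ on $[0,t^*)$ shows that over an interval of length $O(\theta_\ep^{2s+1})$ each $\thh_i$ oscillates by at most $O(\theta_\ep)$. Taking $M$ sufficiently large with respect to $K$, the initial gap $(M-2K-3)\theta_\ep$ cannot close before $t_\ep$. \emph{Second step}: exploiting $\thh_2\ge\thh_1$ and $\thh_1+\thh_2\ge 2\thh_1$, one derives the two-sided control
\beqs
\dot{\thh}_1\leq\gamma\left[-\frac{2^{2s}-1}{2^{2s+1}s\,\thh_1^{2s}}+2(\|\sigma\|_\infty+\dss)\right],\qquad
\dot{\xss}_1\geq\gamma\left[\frac{2^{2s}-1}{2^{2s+1}s(K+2)^{2s}\theta_\ep^{2s}}-\|\sigma\|_\infty-\dss\right].
\eeqs
The first inequality makes $\thh_1$ a subsolution of a scalar ODE whose stationary point is of order one, much larger than $\thh_1(0)=(K+2)\theta_\ep$ for $\ep$ small, so the barrier argument of Lemma~\ref{teplem} gives that $\thh_1$ is strictly decreasing, proving \eqref{thh1decreasing}. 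The second, integrated until $\xss_1$ covers the distance $2\theta_\ep$, produces exactly \eqref{xss1graterxs2lem3} with $t_\ep$ as in \eqref{tep3}.

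\emph{Third step}: to ensure $\thh_1$ does not fall to $\theta_\ep$ before $t_\ep$, discard the positive interaction contributions and use $\dot{\thh}_1\ge-\gamma/(s\,\thh_1^{2s})-2\gamma(\|\sigma\|_\infty+\dss)$. Multiplying by $\thh_1^{2s}$ and integrating from $0$ up to a hypothetical first time $\tau$ with $\thh_1(\tau)=\theta_\ep$, one obtains, exactly as in Lemma~\ref{teplem}, a lower bound
\beqs
\tau\ge\frac{s\bigl[(K+2)^{2s+1}-1\bigr]\theta_\ep^{2s+1}}{\gamma(2s+1)\bigl(1+2s(K+2)^{2s}\theta_\ep^{2s}(\|\sigma\|_\infty+\dss)\bigr)}.
\eeqs
Comparing this with $t_\ep$ in \eqref{tep3}, $K$ can be chosen large enough that $\tau>t_\ep$. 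Thus both defining conditions for $t^*$ survive on $[0,t_\ep]$, so $t^*\ge t_\ep$, and \eqref{xss2-xss1tep3} follows by combining Step 1 with this lower bound.

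The main obstacle will be the coupled choice of the constants $K$ and $M$. They must be picked in the right order: first $K$ is chosen large enough so that the ratio $\bigl[(K+2)^{2s+1}-1\bigr](2^{2s}-1)/\bigl[(2s+1)\,2^{2s+2}(K+2)^{2s}\bigr]$ exceeds $1$, securing $\tau>t_\ep$ in Step 3; then $M$ is selected sufficiently larger than $K$ (and compatibly with hypothesis \eqref{xstimet1ep3}) so that the $O(\theta_\ep)$-perturbation estimated in Step 1 cannot close the initial gap between $\thh_2$ and $\thh_1$. This quantitative, ordered balance, combined with the bootstrap structure, is the delicate point that distinguishes the three-particle case from the two-particle one of Lemma~\ref{teplem}.
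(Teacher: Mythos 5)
Your proposal is correct in substance, and Steps 2 and 3 coincide with the paper's argument: the same upper barrier for $\thh_1$ via $\thh_1+\thh_2\ge 2\thh_1$ and $\thh_2\ge\thh_1$, the same lower bound for $\dot{\xss}_1$ leading to $t\le t_\ep$, the same integration of $\thh_1^{2s}\dot{\thh}_1$ to bound $\tau$ from below, and the same order of constants ($K$ first via the ratio condition, then $M$). Where you genuinely diverge is in how the ordering $\thh_1\le\thh_2$ is propagated. The paper proves this in a separate statement (Lemma~\ref{aMlemma}) by a comparison argument on the \emph{ratio} $\thh_1/\thh_2$: one shows $\frac{d}{dt}(\thh_1/\thh_2)$ is controlled by a quantity $g$ that is negative as long as $\thh_1/\thh_2\le a(M)$, and a contradiction argument keeps the ratio below $a(M)=\frac{K+2}{M-K-1}<1$ for \emph{all} $t>0$. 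You instead run a bootstrap on the set where $\thh_2\ge\thh_1\ge\theta_\ep$, using the crude bound $|\dot{\thh}_i|\le C\theta_\ep^{-2s}$ there, so that over the time scale $t_\ep=O(\theta_\ep^{2s+1})$ each $\thh_i$ drifts by only $O(\theta_\ep)$ (with a constant depending on $K$ but not on $M$), which cannot close the initial gap $(M-2K-3)\theta_\ep$ once $M$ is large. This is simpler and avoids the ratio computation entirely; what it buys less of is time: your argument yields $\thh_1\le\thh_2$ and the monotonicity of $\thh_1$ only on $[0,t_\ep]$, whereas the statement of \eqref{thh1decreasing} (and the paper's Lemma~\ref{aMlemma}) asserts the monotonicity for all $t>0$. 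Since only the interval $[0,t_\ep]$ is used downstream in the proof of Theorem~\ref{mainthmbeforecoll3}, this restriction is harmless for the application, but if you want the lemma exactly as stated you should either weaken ``for any $t>0$'' to ``for any $t\in(0,t_\ep]$'' or supplement your bootstrap with an argument (such as the paper's ratio comparison) that persists past $t_\ep$.
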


The proof of Lemma \ref{teplem3}
is rather long and technical, therefore, not to interrupt the flow
of ideas at this point,
before giving the proof of Lemma \ref{teplem3}, let us conclude the proof of Theorem \ref{mainthmbeforecoll3}
(the proof of Lemma \ref{teplem3}
will then presented in detail in Subsection~\ref{PF:teplem3}).
\medskip

So, let $K$ and $M$ be given by Lemma \ref{teplem3}. 
Let us suppose that the second inequality in \eqref{xstimet1ep3} is satisfied with such a  $M$. We consider  as barrier the function  $\hat{v}_\ep$ defined in \eqref{vepansbarbar3}, where we fix $\dss=\dss_\ep$ in system  \eqref{dynamicalsysbarbar3}, with 
$\dss_\ep$ given by Lemma \ref{vbarelessvtilteinitialtimelem3}. From \eqref{xss2-xss1tep3} and  Proposition \ref{thetaeprop3} we infer that the function $\hat{v}_\ep$  satisfies 
$$\ep(\hat{v}_\ep)_t-\I \hat{v}_\ep+\displaystyle\frac{1}{\ep^{2s}}W'(\hat{v}_\ep)-\sigma(t,x)\geq0\quad\text{in }(0,t_\ep)\times\R.$$
Moreover from \eqref{veplessvepbar3} and Lemma \ref{vbarelessvtilteinitialtimelem3}
\beqs v_\ep(\overline{T}_\ep^1,x)\le\hat{v}_\ep(0,x)\quad\text{for any }x\in\R.\eeqs
The comparison principle then implies 
\beq\label{concmainhm13} v_\ep(\overline{T}_\ep^1+t,x)\le\hat{v}_\ep(t,x)\quad\text{for any }(t,x)\in[0,t_\ep]\times\R.\eeq
Now, for $x\leq  \xs_1^\ep+\frac{\theta_\ep}{2}$, from \eqref{xstimet1ep3}, \eqref{xss1graterxs2lem3} and \eqref{xss2-xss1tep3}   we know that 
$$x-\xss_1(t_\ep)\leq-\frac{\theta_\ep}{2},\quad\text{and}\quad \xss_2(t_\ep)-x\ge \frac{3\theta_\ep}{2}.$$
Therefore, from estimate \eqref{uinfinity} we have
\beq\label{concmainhm23} \sum_{i=1}^2 u\left(\zeta_i\displaystyle\frac{x-\xss_i(t_\ep)}{\ep}\right)-1\leq C\ep^{2s}\theta_\ep^{-2s}.\eeq
Moreover, from \eqref{xss2-xss1tep3}, \eqref{dynamicalsysbarbar3} and \eqref{xbarbarpunto3}, we infer that for $i=1,2,3$
\beq\label{concmainhm33}  |\css_i(t_\ep)|\leq C\theta_\ep^{-2s}.\eeq
Finally, from  \eqref{vepansbarbar3}, \eqref{concmainhm13},  \eqref{concmainhm23} and \eqref{concmainhm33}, we conclude that 
$$v_\ep(\overline{T}_\ep^1+t_\ep,x)\leq C\ep^{2s}\theta_\ep^{-2s}+u\left(\displaystyle\frac{x-\xss_3(t_\ep)}{\ep}\right)\quad\text{for any }x\leq \xs_1^\ep+\frac{\theta_\ep}{2}.$$
A similar  inequality for $x\ge \xs_1^\ep+\frac{\theta_\ep}{2}$ can be proven considering the solution $(\hat{\hat{x}}_1,\hat{\hat{x}}_2,\hat{\hat{x}}_3)$ to  system \eqref{dynamicalsysbarbar3} with initial condition 
$$\hat{\hat{x}}_1(0)=\xs_1^\ep-K\theta_\ep,\quad\hat{\hat{x}}_2(0)=\xs_2^\ep+\theta_\ep,\quad \hat{\hat{x}}_3(0)=\xs_3^\ep-\theta_\ep.$$  
Therefore, for $y_\ep=\min\{\xss_3(t_\ep),\hat{\hat{x}}_3(t_\ep)\}$ we have 
$$v_\ep(\overline{T}_\ep^1+t_\ep,x)\leq C\ep^{2s}\theta_\ep^{-2s}+u\left(\displaystyle\frac{x-y_\ep}{\ep}\right)\quad\text{for any }x\in\R.$$
This proves \eqref{vlesep+layer} with 
$$T_\ep^1=\overline{T}_\ep^1+t_\ep,\quad y_\ep=\min\{\xss_3(t_\ep),\hat{\hat{x}}_3(t_\ep)\}\quad \text{and}\quad \varrho_\ep=  C\ep^{2s}\theta_\ep^{-2s}.$$
 Recalling \eqref{inethmcollbelow}, from  \eqref{xideltalim} we infer that 
 $|y_\ep-z_\ep|=o(1)\quad\text{as }\ep\to0.$
 This concludes the proof of Theorem \ref{mainthmbeforecoll3} in Case 1.

 \subsection{Case 2: close to a triple collision}
In this case, let us suppose that 
 \beq\label{xstimet1ep3triple}\xs_2^\ep-\xs_1^\ep=\theta_\ep,\quad \xs_3^\ep-\xs_2^\ep\leq M\theta_\ep,\eeq
 where $M$ is given by Lemma \ref{teplem3}.
 From \eqref{vepansbar3}, \eqref{veplessvepbar3}, \eqref{concmainhm33barbar} and \eqref{triplecolllem1} we infer that  for any $x\in\R$
 \beqs v^\ep(\overline{T}^1_\ep,x)\le u\left(\displaystyle\frac{x-\xs_1^\ep}{\ep}\right)+C\ep^{2s}\theta_\ep^{-2s},\eeqs
i.e.   \eqref{vlesep+layer} with 
$$y_\ep=\xs_1^\ep,\quad T_\ep=\overline{T}^1_\ep\quad\text{and}\quad \varrho_\ep=C\ep^{2s}\theta_\ep^{-2s}.$$
Remark that from \eqref{xstimet1ep3triple}  and  \eqref{xideltalim}  we have
$$|z_\ep-y_\ep|=|\underline{x}^\ep_3-\xs_1^\ep|\leq |\underline{x}^\ep_3-\xs_3^\ep|+|\xs_3^\ep-\xs_1^\ep|\leq|\underline{x}^\ep_3-\xs_3^\ep|+(M+1)\theta_\ep=o(1)\quad\text{as }\ep\to0.$$
This concludes the proof of Theorem \ref{mainthmbeforecoll3} in Case 2.
\medskip

It only remains to prove Lemma \ref{teplem3}. We will do this
in the subsequent subsection.

\subsection{Proof of Lemma \ref{teplem3}}\label{PF:teplem3}

\noindent Let us denote for $i=1,2$
\beq\label{thetahat3}\thh_i(t):=\xss_{i+1}(t)-\xss_i(t),\eeq 
where $(\xss_1(t),\xss_2(t),\xss_3(t))$ is the solution to system \eqref{dynamicalsysbarbar3}. Then, recalling  \eqref{xstimet1ep3}, we see that $(\thh_1,\thh_2)$ satisfies
\beq\label{thetahateq3}\begin{cases}\dot{\thh}_{1}=\displaystyle\frac{\gamma}{s}\left(-\displaystyle\frac{1}{\thh_{1}^{2s}}+\displaystyle\frac{1}{2(\thh_{1}+\thh_{2})^{2s}}+\displaystyle\frac{1}{2\thh_{2}^{2s}}+s\sigma(t,\xss_1)+s\sigma(t,\xss_2)+2s\dss\right)&\text{in }(0,\hat{T}_c^\delta)\\
\dot{\thh}_{2}=\displaystyle\frac{\gamma}{s}\left(\displaystyle\frac{1}{2\thh_{1}^{2s}}+\displaystyle\frac{1}{2(\thh_{1}+\thh_{2})^{2s}}-\displaystyle\frac{1}{\thh_{2}^{2s}}-s\sigma(t,\xss_3)-s\sigma(t,\xss_2)-2s\dss\right)&\text{in }(0,\hat{T}_c^\delta)\\
\thh_{1}(0)=(K+2)\theta_\ep\\
\thh_{2}(0)\ge(M-K-1)\theta_\ep.\\
\end{cases}\eeq

\begin{lem}\label{aMlemma}
For any $K>1$ there exists $M>2K+3$ independent of $\ep$ and $\ep_0>0$  such that  for any $\ep<\ep_0$, $(\thh_1,\thh_2)$  defined in \eqref{thetahat3} satisfies
\beq\label{thh1lessthh2} \thh_1(t)\leq \thh_2(t),\quad\text{for any }t>0,\eeq and
\beq\label{thh1ldecraMlem} \thh_1(t)\text{ is decreasing for any }t>0.\eeq
\end{lem}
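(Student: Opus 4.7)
The plan is to prove both assertions through a single continuity/bootstrap argument driven by the ODE system \eqref{thetahateq3}. I would define
\begin{equation*}
T^\star := \sup\Big\{\,T\in(0,\hat T_c^{\hat\delta})\,:\,\hat\theta_1(t)\le\hat\theta_2(t)\text{ and }\dot{\hat\theta}_1(t)\le 0\text{ for all }t\in[0,T]\,\Big\}.
\end{equation*}
Choosing $M>2K+3$ makes the initial gap $\hat\theta_2(0)-\hat\theta_1(0)\ge(M-2K-3)\theta_\ep$ strictly positive, and the computation outlined below, applied at $t=0$, will show $\dot{\hat\theta}_1(0)<0$ for $\ep$ small. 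Hence $T^\star>0$, and the goal becomes to prove $T^\star = \hat T_c^{\hat\delta}$, which yields both \eqref{thh1lessthh2} and \eqref{thh1ldecraMlem} (the decreasing character of $\hat\theta_1$ coming in fact in a strict form, as a byproduct of the damping estimate below).

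The core computation is the following. Whenever $\hat\theta_1\le\hat\theta_2$, I would use the bounds $\tfrac{1}{2\hat\theta_2^{2s}}\le\tfrac{1}{2\hat\theta_1^{2s}}$ and $\tfrac{1}{2(\hat\theta_1+\hat\theta_2)^{2s}}\le\tfrac{1}{2^{2s+1}\hat\theta_1^{2s}}$ inside \eqref{thetahateq3} to get
\begin{equation*}
\dot{\hat\theta}_1 \;\le\; \frac{\gamma}{s}\left(-\frac{\kappa_s}{\hat\theta_1^{2s}} + 2s(\|\sigma\|_\infty+\hat\delta)\right),\qquad \kappa_s:=\tfrac{1}{2}-\tfrac{1}{2^{2s+1}}>0,
\end{equation*}
and, by the same trick, $-\dot{\hat\theta}_2\le\gamma/(2s\hat\theta_1^{2s})+C_0$ with $C_0:=2\gamma(\|\sigma\|_\infty+\hat\delta)$. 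Assuming by contradiction $T^\star<\hat T_c^{\hat\delta}$, by continuity either (a) $\dot{\hat\theta}_1(T^\star)=0$, or (b) $\hat\theta_1(T^\star)=\hat\theta_2(T^\star)$. Case (a) will be ruled out immediately: the first display at $T^\star$ would force $\hat\theta_1(T^\star)$ to be bounded below by a positive constant independent of $\ep$, while monotonicity gives $\hat\theta_1(T^\star)\le\hat\theta_1(0)=(K+2)\theta_\ep\to 0$.

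Case (b) will be the delicate one and the main obstacle. For $\ep$ small, the first display sharpens to the strict damping $-\dot{\hat\theta}_1\ge c/\hat\theta_1^{2s}$ throughout $[0,T^\star]$, and I would exploit this as a change-of-variable tool to bound the singular integral
\begin{equation*}
\int_0^{T^\star}\frac{dt}{\hat\theta_1^{2s}(t)} \;\le\; \frac{1}{c}\int_0^{T^\star}\bigl(-\dot{\hat\theta}_1\bigr)\,dt \;\le\; \frac{\hat\theta_1(0)}{c} \;=\; \frac{(K+2)\theta_\ep}{c};
\end{equation*}
the same differential inequality also yields $T^\star = O(\theta_\ep^{2s+1})$. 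Integrating $-\dot{\hat\theta}_2\le\gamma/(2s\hat\theta_1^{2s})+C_0$ and combining with the two estimates just obtained gives $\hat\theta_2(0)-\hat\theta_2(T^\star) \le A(K+2)\theta_\ep + o(\theta_\ep)$ for some constant $A=A(s)$. But case (b) would force this drop to exceed $(M-2K-3)\theta_\ep$, so picking $M$ with $M-2K-3>A(K+2)+1$ yields the required contradiction. The crux of the argument is the observation that both the speed of collapse of $\hat\theta_1$ and the worst-case contraction of $\hat\theta_2$ are driven by the same singular factor $1/\hat\theta_1^{2s}$: hence its integrated budget is paid only once, scales like $\theta_\ep$, and a large but $\ep$-independent $M$ suffices to keep the second gap from closing.
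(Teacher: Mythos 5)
Your argument is correct, but it proceeds along a genuinely different route from the paper's. The paper controls the \emph{ratio} $\hat\theta_1/\hat\theta_2$: it fixes the threshold $a(M)=\frac{K+2}{M-K-1}$, imposes on $M$ the algebraic smallness condition \eqref{M(k)} for the quantity $-1+a^{2s}+a^{2s+1}+2s(\|\sigma\|_\infty+\hat\delta)(a+1)(K+2)^{2s}\theta_\ep^{2s}$, differentiates $\hat\theta_1/\hat\theta_2$ along the flow, and runs a first-touching contradiction on the sign of the resulting integrand $g$ to conclude $\hat\theta_1/\hat\theta_2\le a(M)\le 1$ together with $\dot{\hat\theta}_1\le 0$. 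You instead control the \emph{difference}, or more precisely the total drop of $\hat\theta_2$, through the integral budget $\int_0^{T^\star}\hat\theta_1^{-2s}\,dt\le c^{-1}(\hat\theta_1(0)-\hat\theta_1(T^\star))\le c^{-1}(K+2)\theta_\ep$ obtained by trading the singular factor against the damping $-\dot{\hat\theta}_1\ge c\,\hat\theta_1^{-2s}$; this makes the role of $M$ transparent ($M-2K-3$ only needs to dominate the $\ep$-independent constant $A(K+2)+1$ with $A=1/\kappa_s$) and handles the failure of monotonicity, your case (a), by the elementary observation that $\dot{\hat\theta}_1=0$ would force $\hat\theta_1$ to be bounded below by a constant while it is at most $(K+2)\theta_\ep$. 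The paper's ratio argument yields the slightly stronger quantitative conclusion $\hat\theta_1(t)\le a(M)\hat\theta_2(t)$ at all times (not used downstream), at the price of the implicit algebraic manipulation around $\overline{a}$ in \eqref{M(k)contrad}; your version is more elementary, gives explicit constants, and both establish exactly what Lemma \ref{teplem3} needs, namely \eqref{thh1lessthh2} and the (strict) monotonicity \eqref{thh1ldecraMlem} on the whole interval of existence. I verified the key inequalities: $\kappa_s=\tfrac12-\tfrac1{2^{2s+1}}>0$ for every $s\in(0,1)$, the bound $-\dot{\hat\theta}_2\le \gamma/(2s\hat\theta_1^{2s})+C_0$ follows from \eqref{thetahateq3} after discarding $-\tfrac{1}{2(\hat\theta_1+\hat\theta_2)^{2s}}$ and using $\hat\theta_2\ge\hat\theta_1$, and in case (b) the drop of $\hat\theta_2$ is indeed at least $(M-K-1)\theta_\ep-(K+2)\theta_\ep=(M-2K-3)\theta_\ep$ since $\hat\theta_2(T^\star)=\hat\theta_1(T^\star)\le\hat\theta_1(0)$.
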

\begin{proof}
For $K>1$ and  $M>2K+3$, let us denote
$$a(M):=\frac{K+2}{M-K-1}\geq \frac{\thh_1(0)}{\thh_2(0)}.$$
 For fixed $K$, let us choose $M$ so big and $\ep$ so small  such that 
\beq\label{M(k)}-1+a(M)^{2s}
+a(M)^{2s+1}+2s(\|\sigma\|_\infty+\dss)\left(a(M)+1\right)(K+2)^{2s}\theta_\ep^{2s}<0.\eeq
We want to show that for any $t>0$
\beq\label{thetaimineq2}\frac{\thh_1(t)}{\thh_2(t)}\leq a(M)\leq 1.\eeq
From system \eqref{thetahateq3}, we infer that $\thh_1$ satisfies
\beq\label{thetaimineq1}\dot{\thh}_{1}\leq\frac{\gamma}{s}\left(-\frac{1}{\thh_1^{2s}}+\frac{1}{\thh_2^{2s}}+2s\|\sigma\|_\infty+2s\dss\right)
=\frac{\gamma}{s\thh_1^{2s}}\left[-1+\frac{\thh_1^{2s}}{\thh_2^{2s}}+2s(\|\sigma\|_\infty+\dss)\thh_1^{2s}\right]
\eeq
and 
\beqs \dot{\thh}_{2}\geq \frac{\gamma}{s}\left(-\frac{1}{\thh_2^{2s}}-2s\|\sigma\|_\infty-2s\dss\right).\eeqs
From \eqref{M(k)} we see that the right-hand side in  \eqref{thetaimineq1} is negative at $t=0$. We deduce that there exists $T>0$, that we choose maximal, such that 
\beq\label{theta1doless0}\dot{\thh}_{1}(t)\leq 0\quad\text{for any }t\in(0,T).\eeq
Then $$\thh_1(t)\leq (K+2)\theta_\ep \quad\text{for any }t\in(0,T),$$
moreover in $(0,T)$ we have that 
\beqs\begin{split}\frac{d}{dt}\left(\frac{\thh_1}{\thh_2}\right)&=\frac{ \dot{\thh}_{1}\thh_2-\thh_1 \dot{\thh}_{2}}{\thh_2^2}\\&
\leq\frac{\gamma}{s\thh_2^2}\left[-\frac{\thh_2}{\thh_1^{2s}}+\frac{\thh_2}{\thh_2^{2s}}
+\frac{\thh_1}{\thh_2^{2s}}+2s(\|\sigma\|_\infty+\dss)(\thh_1+\thh_2)\right]\\&
=\frac{\gamma\thh_2}{s\thh_2^2\thh_1^{2s}}\left[-1+\frac{\thh_1^{2s}}{\thh_2^{2s}}
+\frac{\thh_1^{2s+1}}{\thh_2^{2s+1}}+2s(\|\sigma\|_\infty+\dss)(\thh_1+\thh_2)\frac{\thh_1^{2s}}{\thh_2}\right]\\&
=\frac{\gamma\thh_2}{s\thh_2^2\thh_1^{2s}}\left[-1+\frac{\thh_1^{2s}}{\thh_2^{2s}}
+\frac{\thh_1^{2s+1}}{\thh_2^{2s+1}}+2s(\|\sigma\|_\infty+\dss)\left(\frac{\thh_1}{\thh_2}+1\right)\thh_1^{2s}\right]\\&
\leq \frac{\gamma\thh_2}{s\thh_2^2\thh_1^{2s}}\left[-1+\frac{\thh_1^{2s}}{\thh_2^{2s}}
+\frac{\thh_1^{2s+1}}{\thh_2^{2s+1}}+2s(\|\sigma\|_\infty+\dss)\left(\frac{\thh_1}{\thh_2}+1\right)(K+2)^{2s}\theta_\ep^{2s}\right].
\end{split}
\eeqs
Integrating in $(0,t)$, we infer that for any $t\in(0,T)$
\beq\label{thet1/thet2oT}\begin{split}\frac{\thh_1(t)}{\thh_2(t)}&\leq a(M)+\int_0^t \frac{\gamma\thh_2(\tau)}{s\thh_2^2(\tau)\thh_1^{2s}(\tau)}\left[-1+\frac{\thh_1^{2s}(\tau)}{\thh_2^{2s}(\tau)}
+\frac{\thh_1^{2s+1}(\tau)}{\thh_2^{2s+1}(\tau)}\right.\\&
\left.+2s(\|\sigma\|_\infty+\dss)\left(\frac{\thh_1(\tau)}{\thh_2(\tau)}+1\right)(K+2)^{2s}\theta_\ep^{2s}\right]d\tau.
\end{split}\eeq
Let us call
$$g(\tau):=-1+\frac{\thh_1^{2s}(\tau)}{\thh_2^{2s}(\tau)}
+\frac{\thh_1^{2s+1}(\tau)}{\thh_2^{2s+1}(\tau)}
+2s(\|\sigma\|_\infty+\dss)\left(\frac{\thh_1(\tau)}{\thh_2(\tau)}+1\right)(K+2)^{2s}\theta_\ep^{2s}.$$
We observe that $g(0)<0$ thanks to \eqref{M(k)}. Thus, we want to show that 
\beq\label{gnegative0T}g(\tau)<0\quad\text{for any }\tau\in(0,T).\eeq
Assume by contradiction that this is not true. Then there exists $t_0\in(0,T)$ such that 
\beq\label{gcontrdc}g(\tau)<0\quad\text{for any }\tau\in(0,t_0)\eeq
and $g(t_0)=0$. Then $\frac{\thh_1(t_0)}{\thh_2(t_0)}=\overline{a}$ with 
\beq\label{M(k)contrad}-1+\overline{a}^{2s}
+\overline{a}^{2s+1}+2s(\|\sigma\|_\infty+\dss)\left(\overline{a}+1\right)(K+2)^{2s}\theta_\ep^{2s}=0.\eeq
On the other hand, by \eqref{thetaimineq1} and \eqref{gcontrdc}, we see that 
$$ \dot{\thh}_{1}<\frac{\gamma}{s\thh_1^{2s}}g<0\quad\text{in }(0,t_0)$$
and therefore, recalling \eqref{theta1doless0}, we conclude that $t_0<T$. In particular, we can use \eqref{thet1/thet2oT} with $t=t_0$.
Thus, from  \eqref{thet1/thet2oT}  and \eqref{gcontrdc} we infer that 
$$\overline{a}=\frac{\thh_1(t_0)}{\thh_2(t_0)}\leq a(M)+\int_0^t \frac{\gamma\thh_2(\tau)}{s\thh_2^2(\tau)\thh_1^{2s}(\tau)}g(\tau)d\tau<a(M).$$
This and \eqref{M(k)contrad} give 
\beqs\begin{split}0&=-1+\overline{a}^{2s}
+\overline{a}^{2s+1}+2s(\|\sigma\|_\infty+\dss)\left(\overline{a}+1\right)(K+2)^{2s}\theta_\ep^{2s}\\&<-1+a(M)^{2s}
+a(M)^{2s+1}+2s(\|\sigma\|_\infty+\dss)\left(a(M)+1\right)(K+2)^{2s}\theta_\ep^{2s}\end{split}
\eeqs
and this is in contradiction with  \eqref{M(k)}. Therefore we have completed the proof of 
\eqref{gnegative0T}. In turn, we see that  \eqref{thet1/thet2oT} and \eqref{gnegative0T} imply \eqref{thetaimineq2}, and thus \eqref{thh1lessthh2}. Finally, \eqref{thh1ldecraMlem} is a consequence of \eqref{theta1doless0}.
\end{proof}

\noindent Let us now complete the proof of Lemma \ref{teplem3}. 

Let us fix $K>1$ such that
\beq\label{K} \frac{(K+2)^{2s+1}-1}{(K+2)^{2s}}\geq\frac{(2s+1)2^{2s+2}(1+2^{2s})}{2^{2s}-1}.\eeq
Let us choose $M>2K+3$ such that \eqref{thh1lessthh2} and \eqref{thh1ldecraMlem} hold for any $\ep$ small enough. Then  \eqref{thh1decreasing} is given by \eqref{thh1ldecraMlem} and consequently 
 for $t>0$
\beq\label{theta1decreqsecond}\thh_1(t)<(K+2)\theta_\ep,\eeq
and  there exists $\tau>0$  such that 
\beq\label{taulemtep}\thh_1(\tau)=\theta_\ep.\eeq
Now,  from system \eqref{thetahateq3} we see that $\thh_1$ satisfies
\beqs \dot{\thh}_1\ge\frac{\gamma}{s}\left(-\frac{1}{\thh_1^{2s}}-2s\|\sigma\|_\infty\right)=-\gamma\frac{1+2s\|\sigma\|_\infty \thh_1^{2s}}{s\thh_1^{2s}}.\eeqs
Multiplying by $\thh_1^{2s}$, integrating in $(0,\tau)$ 
and using \eqref{theta1decreqsecond}, we get 
\beqs\begin{split} \frac{1}{2s+1}( \thh_1^{2s+1}(\tau)- \thh_1^{2s+1}(0))&=\frac{\theta_\ep^{2s+1}}{2s+1}(1-(K+2)^{2s+1})\\&
\ge -\gamma\frac{1+2s\|\sigma\|_\infty (K+2)^{2s}\theta_\ep^{2s}}{s}\tau\end{split}\eeqs from which
\beq\label{taubound3}\tau\ge \frac{s\theta_\ep^{2s+1}((K+2)^{2s+1}-1)}{\gamma(2s+1)(1+2s (K+2)^{2s}\theta_\ep^{2s}\|\sigma\|_\infty)}.\eeq
Next, for fixed $K$ satisfying \eqref{K}, let $\ep$ be so small that 
\beq\label{epsmallenoughlemtep}2^{2s+1}s(K+2)^{2s}\theta_\ep^{2s}(\|\sigma\|_\infty+\dss)\leq\frac{2^{2s}-1}{2}.\eeq
Then, 
from \eqref{dynamicalsysbarbar3}, \eqref{thh1lessthh2},  \eqref{theta1decreqsecond} and \eqref{epsmallenoughlemtep}, we have 
\beq\label{xss1dotle}\begin{split} \dot{\xss}_1&=\gamma\left(\frac{1}{2s\thh_1^{2s}}-\frac{1}{2s(\thh_1+\thh_2)^{2s}}-\sigma(t,\xss_1)-\dss\right)\\&
\geq \gamma\left(\frac{2^{2s}-1}{2^{2s+1}s\thh_1^{2s}}-\|\sigma\|_\infty-\dss\right)\\&
=\gamma\frac{2^{2s}-1-2^{2s+1}s\thh_1^{2s}(\|\sigma\|_\infty+\dss)}{2^{2s+1}s\thh_1^{2s}}\\&
\geq \gamma\frac{2^{2s}-1-2^{2s+1}s(K+2)^{2s}\theta_\ep^{2s}(\|\sigma\|_\infty+\dss)}{2^{2s+1}s(K+2)^{2s}\theta_\ep^{2s}}\\&
\geq\gamma\frac{2^{2s}-1}{2^{2s+2}s(K+2)^{2s}\theta_\ep^{2s}}\\&>0.
\end{split}\eeq
Let $t>0$ be such that 
\beq\label{xsst}\xss_1(t)=\xs_2^\ep=\xss_1(0)+2\theta_\ep,\eeq then integrating \eqref{xss1dotle} in $(0,t)$, we get
$$2\theta_\ep\geq \gamma\frac{2^{2s}-1-2^{2s+1}s(K+2)^{2s}\theta_\ep^{2s}(\|\sigma\|_\infty+\dss)}{2^{2s+1}s(K+2)^{2s}\theta_\ep^{2s}}t$$
from which
\beq\label{tineq}t\leq t_\ep\eeq
where $t_\ep$ is defined in \eqref{tep3}.
Moreover from \eqref{K} and \eqref{epsmallenoughlemtep}
\beqs\begin{split} \frac{(K+2)^{2s+1}-1}{(K+2)^{2s}}&\geq\frac{(2s+1)2^{2s+2}(1+2^{2s})}{2^{2s}-1}=\frac{(2s+1)2^{2s+2}\left(1+\frac{2^{2s}-1}{2}\right)}{2^{2s}-1-\frac{2^{2s}-1}{2}}\\&
\geq \frac{(2s+1)2^{2s+2}\left(1+2^{2s+1}s (K+2)^{2s}\theta_\ep^{2s}(\|\sigma\|_\infty+\dss)\right)}{2^{2s}-1-2^{2s+1}s(K+2)^{2s}\theta_\ep^{2s}(\|\sigma\|_\infty+\dss)}\\&
\geq \frac{(2s+1)2^{2s+2}\left(1+2s (K+2)^{2s}\theta_\ep^{2s}\|\sigma\|_\infty\right)}{2^{2s}-1-2^{2s+1}s(K+2)^{2s}\theta_\ep^{2s}(\|\sigma\|_\infty+\dss)},
\end{split}\eeqs
which implies 
\beqs \frac{s\theta_\ep^{2s+1}((K+2)^{2s+1}-1)}{\gamma(2s+1)(1+2s (K+2)^{2s}\theta_\ep^{2s}\|\sigma\|_\infty)}\ge \frac{2^{2s+2}s(K+2)^{2s}\theta_\ep^{2s+1}}{\gamma[2^{2s}-1-2^{2s+1}s(K+2)^{2s}\theta_\ep^{2s}(\|\sigma\|_\infty+\dss)]}=t_\ep.\eeqs
The previous inequality and \eqref{taubound3}   give
$$\tau\ge t_\ep.$$
This  inequality, \eqref{taulemtep}, \eqref{thh1decreasing} and \eqref{thh1lessthh2} imply \eqref{xss2-xss1tep3}. 
Finally, since  $\xss_1(t)$ is increasing  by \eqref{xss1dotle},  \eqref{xsst} and 
\eqref{tineq} give \eqref{xss1graterxs2lem3}. 
This completes the proof of
Lemma \ref{teplem3}.


\section{Proof of Theorem \ref{thmexponentialdecay3}}\label{R F 24}

This section is devoted to the proof of
Theorem \ref{thmexponentialdecay3}. Let us consider the function
 \beq\label{h}h(t,x):=u\left(\frac{x-x(t)}{\ep}\right)+\varrho_\ep e^{-\frac{\mu t}{\ep^{2s+1}}}\eeq
 where
 \beq\label{x(t)}x(t):=y_\ep+K_\ep \varrho_\ep (e^{-\frac{\mu t}{\ep^{2s+1}}}-1),\eeq
 where $y_\ep$ is given by Theorem \ref{mainthmbeforecoll3}.
We show that~$h$ is a supersolution, as next result states: 
 
\begin{lem}\label{layerexponelem}There exist $\ep_0>0$ and $\mu>0$,  such that for any $\ep< \ep_0$, there exists $K_\ep=o(1)$ as $\ep\to 0$ such that  function $h$ defined in \eqref{h}-\eqref{x(t)} satisfies
$$\ep h_t-\I h+\displaystyle\frac{1}{\ep^{2s}}W'(h)\geq 0$$
for any $x\in\R$ and  $t>0$.
\end{lem}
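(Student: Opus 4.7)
The plan is to compute each ingredient of the operator $\mathcal{L}h := \ep h_t - \I h + \ep^{-2s}W'(h)$ directly, reduce to a scalar inequality, and then split the analysis into two complementary regions in the stretched variable $\xi := (x-x(t))/\ep$.

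First I would compute $\dot{x}(t) = -\frac{\mu K_\ep \alpha}{\ep^{2s+1}}$, where $\alpha := \varrho_\ep e^{-\mu t/\ep^{2s+1}}$, so that
\[
\ep h_t = \frac{\mu K_\ep\,\alpha}{\ep^{2s+1}}\,u'(\xi) - \frac{\mu\,\alpha}{\ep^{2s}}.
\]
Using that the fractional Laplacian annihilates constants and scales as $\I[u(\cdot/\ep)](x)=\ep^{-2s}(\I u)(\xi)$, together with the layer equation $\I u = W'(u)$ in \eqref{u}, I obtain $\I h = \ep^{-2s}W'(u(\xi))$. Combining these with a second-order Taylor expansion $W'(u(\xi)+\alpha)=W'(u(\xi))+W''(u(\xi))\alpha + O(\alpha^2)$ (valid since $W\in C^{3,\alpha}$ with bounded third derivative by periodicity), the supersolution inequality reduces, after dividing by the positive quantity $\alpha/\ep^{2s}$, to
\[
Q(\xi,t)\;:=\;\frac{\mu K_\ep}{\ep}\,u'(\xi) \;-\; \mu \;+\; W''(u(\xi)) \;+\; O(\varrho_\ep)\;\ge\;0.
\]

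The key idea is now a two-region analysis. Fix $R>0$ large, to be chosen. In the outer region $|\xi|\ge R$, Lemma \ref{uinfinitylem} gives $u(\xi)\to H(\xi)$ as $|\xi|\to\infty$; by continuity and the periodicity $W''(1)=W''(0)=\beta$ in \eqref{Wass}, I can fix $R$ large so that $W''(u(\xi))\ge 3\beta/4$ for all $|\xi|\ge R$. Choosing any $\mu\in(0,\beta/2)$, the nonnegative first term is discarded and $Q\ge 3\beta/4-\mu+O(\varrho_\ep)\ge \beta/4$ for $\ep$ small. In the inner region $|\xi|\le R$, the function $u'$ is continuous and strictly positive in view of \eqref{u}, hence bounded below by some $c_R>0$, while $W''(u(\xi))\ge -\|W''\|_\infty=:-M_0$. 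Therefore
\[
Q(\xi,t)\;\ge\;\frac{\mu K_\ep c_R}{\ep} - \mu - M_0 + O(\varrho_\ep),
\]
which becomes nonnegative once $K_\ep/\ep$ is large enough. Choosing $K_\ep := \sqrt{\ep}$ (for instance) gives $K_\ep=o(1)$ as required by the statement, while $K_\ep/\ep=\ep^{-1/2}\to\infty$, so the inner estimate is satisfied for $\ep$ small.

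The step I expect to require most care is the matching of the two regimes and the precise quantification of the Taylor remainder: one must check that the $O(\alpha^2)$ term is genuinely absorbed uniformly in $\xi$, using only the boundedness of $W'''$ and $\alpha\le\varrho_\ep=o(1)$, and that the choice of $\mu$ is independent of $\ep$ (it depends only on $\beta$). The rest is essentially bookkeeping, and combining the two estimates yields $\mathcal{L}h\ge 0$ in $(0,+\infty)\times\R$, proving Lemma \ref{layerexponelem}.
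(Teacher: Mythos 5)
Your proposal is correct, and the overall strategy coincides with the paper's: the barrier is the translated layer plus an exponentially decaying constant, the drift $\dot x(t)$ contributes the positive term $\frac{\mu K_\ep}{\ep}\,u'\!\left(\frac{x-x(t)}{\ep}\right)$ which must beat the loss $-\mu$ where $W''$ may be negative, and far from the transition one uses $W''(u)\ge \mathrm{const}>0$ near the wells together with $\varrho_\ep=o(1)$ to absorb the quadratic Taylor remainder. The one genuine difference is the scale at which you split. The paper cuts at $|x-x(t)|=\kappa_\ep$ with $\ep/\kappa_\ep=o(1)$ and $\kappa_\ep^{2s+1}\ep^{-2s}=o(1)$, so its ``inner'' region is $|\xi|\le\kappa_\ep/\ep\to+\infty$ in the stretched variable; there the only usable lower bound is the algebraic decay $u'(\xi)\ge c\,|\xi|^{-1-2s}$ from \eqref{u'infinity}, which forces the balance $K_\ep\mu=\frac{C+\mu}{c}\,\kappa_\ep^{2s+1}\ep^{-2s}$ and the above constraints on $\kappa_\ep$. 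You instead cut at a fixed $|\xi|=R$, where $u'$ has a uniform positive minimum $c_R$ by continuity and strict monotonicity; this decouples $K_\ep$ from the cutoff, and any $K_\ep$ with $K_\ep=o(1)$ and $\ep/K_\ep=o(1)$ (your $K_\ep=\sqrt\ep$) works. Your outer region $|\xi|\ge R$ is larger than the paper's, but the same estimate $W''(u(\xi))\ge 3\beta/4$ holds there for $R$ fixed large, since $u(\xi)\to 0,1$ and $W''(0)=W''(1)=\beta$ by periodicity; so nothing is lost. The two points that indeed require the care you flag — uniformity in $t$ of the remainder after dividing by $\alpha\ep^{-2s}>0$ (it is $O(\alpha)=O(\varrho_\ep)$ since $W'''$ is bounded by periodicity), and the $\ep$-independence of $\mu$ (any $\mu\le\beta/4$, say, depending only on $W$) — both check out. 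In short: same architecture, a mildly simpler inner-region estimate, and a cleaner, more flexible choice of $K_\ep$; both arguments deliver the lemma as stated.
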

\begin{proof}
We compute
\beqs \begin{split}\ep h_t&=-\dot{x}u'\left(\frac{x-x(t)}{\ep}\right)-\ep^{-2s} \varrho_\ep \mu e^{-\frac{\mu t}{\ep^{2s+1}}}\\&
=\ep^{-2s-1}K_\ep \varrho_\ep\mu  e^{-\frac{\mu t}{\ep^{2s+1}}} u'\left(\frac{x-x(t)}{\ep}\right)-\ep^{-2s} \varrho_\ep \mu e^{-\frac{\mu t}{\ep^{2s+1}}},\end{split}\eeqs
and 
\beqs \I h=\ep^{-2s}\I u \left(\frac{x-x(t)}{\ep}\right)=\ep^{-2s} W'\left(u \left(\frac{x-x(t)}{\ep}\right)\right).\eeqs
Then
\beq\label{Ieph}\begin{split} \ep h_t-\I h+\displaystyle\frac{1}{\ep^{2s}}W'(h)&=\ep^{-2s-1}K_\ep \varrho_\ep\mu  e^{-\frac{\mu t}{\ep^{2s+1}}} u'\left(\frac{x-x(t)}{\ep}\right)-\ep^{-2s}\varrho_\ep \mu e^{-\frac{\mu t}{\ep^{2s+1}}}\\&
+\ep^{-2s} W'\left(u \left(\frac{x-x(t)}{\ep}\right)+\varrho_\ep e^{-\frac{\mu t}{\ep^{2s+1}}}\right)-\ep^{-2s}W'\left(u \left(\frac{x-x(t)}{\ep}\right)\right).
\end{split}\eeq

\noindent\emph{Case 1.}
Suppose that $x$ is close to $x(t)$ more than $\kappa_\ep$:
 \beqs |x-x(t)|\leq \kappa_\ep\eeqs
 where $\kappa_\ep$ is such that 
 \beq\label{epfrackappalem}\frac{\ep}{\kappa_\ep}=o(1)\quad\text{as }\ep\to 0.\eeq
 Then estimate \eqref{u'infinity} implies
 \beqs u' \left(\frac{x-x(t)}{\ep}\right)\geq c\left( \frac{\ep}{\kappa_\ep}\right)^{1+2s}.\eeqs
 Moreover from the Lipschitz regularity of $W'$ we get
 \beqs \ep^{-2s} W'\left(u \left(\frac{x-x(t)}{\ep}\right)+\varrho_\ep e^{-\frac{\mu t}{\ep^{2s+1}}}\right)-\ep^{-2s}W'\left(u \left(\frac{x-x(t)}{\ep}\right)\right)\geq - \ep^{-2s}C\varrho_\ep e^{-\frac{\mu t}{\ep^{2s+1}}}.\eeqs
 Therefore, using in addition that $\varrho_\ep=o(1)$ as $\ep\to 0$, we get 
\beqs\begin{split} \ep h_t-\I h+\displaystyle\frac{1}{\ep^{2s}}W'(h)\geq& \frac{K_\ep \varrho_\ep\mu }{\ep^{2s+1}}e^{-\frac{\mu t}{\ep^{2s+1}}}c\left( \frac{\ep}{\kappa_\ep}\right)^{1+2s}
-\frac{\varrho_\ep\mu}{\ep^{2s}} e^{-\frac{\mu t}{\ep^{2s+1}}}- \frac{C\varrho_\ep}{\ep^{2s}}e^{-\frac{\mu t}{\ep^{2s+1}}}\\&
=\varrho_\ep e^{-\frac{\mu t}{\ep^{2s+1}}}(cK_\ep\mu\kappa_\ep^{-2s-1}-\mu\ep^{-2s}-C\ep^{-2s})\\&
=0\end{split} \eeqs
 if 
 \beq\label{kepmuchoice} K_\ep \mu=\frac{C+\mu}{c}\kappa_\ep^{2s+1}\ep^{-2s}.\eeq

 \noindent\emph{Case 2.} Suppose that 
 \beqs |x-x(t)|\geq \kappa_\ep,\eeqs
 where $\kappa_\ep$ satisfies \eqref{epfrackappalem}.
 Then, \eqref{epfrackappalem},  estimate \eqref{uinfinity} and 
 $$W''(0)=\beta>0$$ imply that for $\ep$ small enough, we have 
 $$W''\left(u \left(\frac{x-x(t)}{\ep}\right)\right)\geq \frac{\beta}{2}.$$Therefore, we have 
 \beqs\begin{split} 
  &W'\left(u \left(\frac{x-x(t)}{\ep}\right)+\varrho_\ep e^{-\frac{\mu t}{\ep^{2s+1}}}\right)-W'\left(u \left(\frac{x-x(t)}{\ep}\right)\right)=\\&W''\left(u \left(\frac{x-x(t)}{\ep}\right)\right)
  \varrho_\ep e^{-\frac{\mu t}{\ep^{2s+1}}}+O\left(\varrho_\ep e^{-\frac{\mu t}{\ep^{2s+1}}}\right)^2\\&
  \geq \frac{\beta}{2}\varrho_\ep e^{-\frac{\mu t}{\ep^{2s+1}}}+O\left(\varrho_\ep e^{-\frac{\mu t}{\ep^{2s+1}}}\right)^2\\&
  \geq  \frac{\beta}{4}\varrho_\ep e^{-\frac{\mu t}{\ep^{2s+1}}},
  \end{split} \eeqs
  for $\ep$ small enough.
 From \eqref{Ieph} and the previous estimate, we conclude that 
  \beqs\begin{split}\ep h_t-\I h+\displaystyle\frac{1}{\ep^{2s}}W'(h)&
   \geq -\ep^{-2s}\varrho_\ep \mu e^{-\frac{\mu t}{\ep^{2s+1}}}+\ep^{-2s}\frac{\beta}{4}\varrho_\ep e^{-\frac{\mu t}{\ep^{2s+1}}}\\&
  =\ep^{-2s}\varrho_\ep e^{-\frac{\mu t}{\ep^{2s+1}}}\left(\frac{\beta}{4}-\mu\right)
  \geq 0
   \end{split} \eeqs
   if 
   \beq\label{muchoice}\mu\le \frac{\beta}{4}.\eeq
 The lemma is then proven  choosing $\mu$ satisfying  \eqref{muchoice}, $\kappa_\ep$   satisfying  \eqref{epfrackappalem} and the following
 \beqs\kappa_\ep^{2s+1}\ep^{-2s}=o(1)\quad\text{as }\ep\to 0,\eeqs
 and finally $K_\ep$ satisfying \eqref{kepmuchoice}. 
   \end{proof}
Let us now conclude the proof of Theorem \ref{thmexponentialdecay3}.
From Theorem \ref{mainthmbeforecoll3} we have
$$v_\ep(T_\ep^1,x)\leq h(0,x)\quad\text{for any } x\in\R.$$
Moreover, for $\mu$ and $K_\ep=o(\ep)$ as $\ep\to0$, given by Lemma \ref{layerexponelem} and $\ep$ small enough, the function $h(t,x)$ is a
supersolution of the equation
\eqref{vepeq}. The comparison principle then implies
$$v_\ep(T_\ep^1+t,x)\leq h(t,x)\quad\text{for any } x\in\R\text{ and }t>0,$$
i.e.  \eqref{vexpontolayer}. Similarly we can prove inequality in \eqref{vlesep+layerbelow} and this concludes the proof of the theorem.

\section{Proof of Corollary \ref{stationarycor}}\label{R F 25}
In order to complete the proof of
Corollary \ref{stationarycor}, we follow the proof of Step 2 of Theorem 2 in \cite{psv},
and we perform the necessary modifications needed in this case.

For fixed $\ep$ the function $v_\ep(t,x)$ is H{\"o}lder continuous in $x$ uniformly in time, see e.g. \cite{mp}. Then, there exists a sequence $(t_k)_k$ with 
$t_k\to+\infty$ as $k\to \infty$ such that 
$$v_\ep(t_k,x)\to v_\ep^\infty(x)\quad\text{as }k\to \infty,$$
with $v_\ep^\infty(x)$ viscosity solution of the stationary equation 

$$\I v=\displaystyle\frac{1}{\ep^{2s}}W'(v)\quad\text{in }\R.$$ Under the assumptions \eqref{Wass}, the function $v_\ep^\infty$ is of class $C^{2,\alpha}(\R)$ for some $\alpha$ depending of $s$, see for instance Lemma 5 in the Appendix of \cite{psv}. 
Moreover, for $\ep$ small enough, by Theorem \ref{thmexponentialdecay3}
\beq\label{vinftybetweenlay} u\left(\frac{x-z_\ep-K_\ep\varrho_\ep}{\ep}\right)\leq  v_\ep^\infty(x)\leq u\left(\frac{x-y_\ep+K_\ep\varrho_\ep}{\ep}\right)\quad\text{for any }x\in\R,\eeq
where $u$ is the solution of \eqref{u}.
Inequalities \eqref{vinftybetweenlay} and estimate \eqref{uinfinity} imply that 
\beq\label{asymptvinfty}\lim_{x\to-\infty}v_\ep^\infty(x)=0\quad\text{and}\quad\lim_{x\to+\infty}v_\ep^\infty(x)=1.\eeq
Then there exists $x_\ep\in\R$ such that 
$ v_\ep^\infty(x_\ep)=\frac{1}{2}.$
Let us denote
\beq\label{vinf=uep} u_\ep(x):= u\left(\frac{x-x_\ep}{\ep}\right).\eeq
Remark that 
\beq\label{vin=uepxep}v_\ep^\infty(x_\ep)=u_\ep(x_\ep)=\frac{1}{2}.\eeq
We want to show that 
\beq\label{vinftylayer}v_\ep^\infty(x)=  u_\ep(x)\quad\text{for any }x\in\R.\eeq
From \eqref{asymptvinfty}, for any $0<a<1$ there exists $k(a)\in\R$ such that 
\beq\label{vepkaaboveu0} v_\ep^\infty(x+k(a))+a>u_\ep(x)\quad\text{for any } x\in\R.\eeq
Let us denote 
\beqs\overline{k}(a):=\inf\{k(a)\in\R\,|\,  \text{\eqref{vepkaaboveu0} holds true}\}.\eeqs
Then, from \eqref{asymptvinfty} and $a<1$, we have that $\kka$ is finite. Otherwise, choosing a minimizing sequence of $k(a)$ and passing to the limit along the sequence in \eqref{vepkaaboveu0}, we would get a contradiction. The properties of the infimum  imply that 
\beq\label{vepkaaboveu} v_\ep^\infty(x+k(a))+a>u_\ep(x)\quad\text{for any } x\in\R,\,k(a)>\kka\eeq
and there exist  sequences $(\eta_{j,a})_j$, $(x_{j,a})_j$ with 
\beqs \eta_{j,a}\geq0\quad\text{and}\quad\lim_{j\to+\infty}\eta_{j,a}=0,\eeqs such that 
\beq\label{vepkaaboveu2}v_\ep^\infty(x_{j,a}+\kka-\eta_{j,a})+a\leq u_\ep(x_{j,a}).\eeq
We observe that $(x_{j,a})_j$ must be bounded. Indeed, if 
\beqs \lim_{j\to+\infty}x_{j,a}=\pm\infty,\eeqs then 
we would have either
\beqs a= \lim_{j\to+\infty}v_\ep^\infty(x_{j,a}+\kka-\eta_{j,a})+a\leq u_\ep(x_{j,a})=0,\eeqs or 
\beqs 1+a= \lim_{j\to+\infty}v_\ep^\infty(x_{j,a}+\kka-\eta_{j,a})+a\leq u_\ep(x_{j,a})=1,\eeqs
a contradiction. Therefore, we may suppose that 
\beqs \lim_{j\to+\infty} x_{j,a}=x_a,\eeqs
for some $x_a\in\R$, and \eqref{vepkaaboveu},  \eqref{vepkaaboveu2} and the continuity of $ v_\ep^\infty$ and $u_\ep$ imply

\beq\label{utouchesv}v_\ep^\infty(x_a+\kka)+a= u_\ep(x_a),\eeq and
\beq\label{vinftyaboveuep}v_\ep^\infty(x+\kka)+a\geq u_\ep(x),\quad\text{for any }x\in\R.\eeq
Consequently
\beq\label{Isv-uatxa}\begin{split}0&\leq \int_{\R}\frac{v_\ep^\infty(x+\kka)+a- u_\ep(x)}{|x-x_a|^{1+2s}}dx\\&=\I v_\ep^\infty(x_a+\kka)-\I u_\ep(x_a)\\&=\ep^{-2s}W'(v_\ep^\infty(x_a+\kka))-\ep^{-2s}W'(u_\ep(x_a))\\&
=\ep^{-2s}W'(u_\ep(x_a)-a)-\ep^{-2s}W'(u_\ep(x_a)).\end{split}\eeq
Now we claim that the sequence $(x_a)_a$ is bounded. Indeed, suppose that, up to subsequences,
\beqs\lim_{a\to0^+}x_a=\pm\infty,\eeqs then
\beq\label{uepinfcontr}\text{either}\quad\lim_{a\to0^+}u_\ep(x_a)=0\quad\text{or}\quad \lim_{a\to0^+}u_\ep(x_a)=1.\eeq
Assumptions \eqref{Wass} on the potential imply that there exists $r>0$ such that 
\beqs W'(u)\geq W'(v)+\frac{\beta}{2}(u-v)\quad\text{if }u,v\in [0,r]\text{ or  }u,v\in[1-r,1]\text{ and }v\leq u,\eeqs
where $\beta=W''(0)>0$.

By \eqref{uepinfcontr} there exists $a_0>0$ such that both $u_\ep(x_a)-a$ and $u_\ep(x_a)$ belong to either $[0,r]$ or $[1-r,1]$, for any $a\in(0,a_0)$. It follows that 
$$W'(u_\ep(x_a)-a)-W'(u_\ep(x_a))\leq -\frac{\beta}{2} a$$ and this is in contradiction with \eqref{Isv-uatxa}. Thus the sequence $(x_a)_a$ is bounded and we may suppose that, up to subsequences, 
\beq\label{xalimit}\lim_{a\to0^+}x_a=x_0,\eeq
for some $x_0\in\R$.  We also have that the sequence $(\kka)_a$ is bounded. Indeed, if
\beqs \lim_{a\to0^+}\kka=\pm\infty,\eeqs
we would obtain from \eqref{utouchesv} and \eqref{xalimit} that, either
\beqs0=\lim_{a\to0^+}v_\ep^\infty(x_a+\kka)=u_\ep(x_0),\eeqs or 
\beqs1=\lim_{a\to0^+}v_\ep^\infty(x_a+\kka)=u_\ep(x_0),\eeqs
and this contradicts the fact that  $0<u_\ep(x)<1$ for any $x\in\R$. Thus $(\kka)_a$ is bounded. Accordingly, we may suppose that
\beqs \lim_{a\to0^+}\kka=k_0,\eeqs for some $k_0\in\R$. Hence,
passing to the limit as $a\to 0^+$ in \eqref{Isv-uatxa}, we conclude that 
$$PV\int_{\R}\frac{v_\ep^\infty(x+k_0)- u_\ep(x)}{|x-x_0|^{1+2s}}dx=0.$$
On the other hand, by passing to the limit in \eqref{vinftyaboveuep}, we see that 
$v_\ep^\infty(x+k_0)- u_\ep(x)\geq 0$ for any $x\in\R.$ We conclude that
$$v_\ep^\infty(x+k_0)=u_\ep(x)\quad\text{for any }x\in\R.$$ Recalling \eqref{vin=uepxep}, we infer that $k_0=0$ and this gives \eqref{vinf=uep}.
This completes the proof of
Corollary \ref{stationarycor}.


\section{Proof of the results that are valid for both two and three particles}\label{thetaeppropsecproof}
In this section we prove the results which
are auxiliary to the proofs of our main theorems and which are
valid for both the cases of two and three particles.
These results are Propositions \ref{tclimprop2}, \ref{tclimprop3}, 
\ref{thetaeprop}, \ref{thetaeprop3},
and Lemmata \ref{vbarelessvtilteinitialtimelem}, \ref{vbarelessvtilteinitialtimelem3}, \ref{initialcondprop3}  and \ref{initialcondprop2}. 
In what follows we will denote by $N$ the
number of particles, then we may have either $N=2$ or $N=3$. 
We remark that the system of ODE's \eqref{dynamicalsysbar2} can be written as  \eqref{dynamicalsysbar3} for $i=1,2$. 

\subsection{Proof of Propositions \ref{tclimprop2} and \ref{tclimprop3}}
In order to prove \eqref{Tcdeltalim3} and  \eqref{Tcdeltalim2} suppose by contradiction that there is a sequence $(\delta_k)_k$, with $\delta_k\to 0$ as $k\to+\infty$  such that 
$$\lim_{k\to+\infty}T_c^{\delta_k}=T_c+2a,$$ for some $a\in\R\setminus\{0\}$. Without loss of generality we may assume $a>0$.  Then there exists $K$ such that for any $k>K$ the solution of system \eqref{dynamicalsysbar3} with $\delta=\delta_k$ satisfies
\beq\label{xbarmintcma} \min_{t\in [0,T_c+a]\atop{i=1,\ldots, N-1}}\xs_{i+1}(t)-\xs_i(t)\geq M_a>0,\eeq for some  $M_a$ independent of $k$.
Accordingly the right-hand side of the equation in \eqref{dynamicalsysbar3}, together with its derivatives, is bounded  when $t\in[0,T_c+a]$ by a quantity that depends on $a$. 
Therefore, we are in the position to apply the continuity result of the solution with respect to the parameter $\delta_k$. We obtain that, as $k\to+\infty$, the solution 
of \eqref{dynamicalsysbar3} converges to $(x_1^\infty,\ldots,x_N^\infty)$, solution of \eqref{dynamicalsys3} in $[0,T_c+a]$  and satisfying \eqref{xbarmintcma}. 
The continuity of $(x_1,\ldots,x_N)$ and $(x_1^\infty,\ldots,x_N^\infty)$ implies that there exists $\tau>0$ such that 

\beq\label{mtauMa}\begin{split} m_\tau:=\min_{t\in [0,T_c-\tau]\atop{i=1,\ldots, N}}x_{i+1}(t)-x_i(t)&\leq \min_{{i=1,\ldots,N-1}}x_{i+1}(T_c-2\tau)-x_i(T_c-2\tau)\\&< M_a\\&\leq\min_{{i=1,\ldots,N-1}}x^\infty_{i+1}(T_c-2\tau)-x^\infty_i(T_c-2\tau).\end{split}\eeq
The right-hand side of the equation in \eqref{dynamicalsys3}
 is Lipschitz continuous when $t\in[0,T_c-\tau]$ and $x_i\geq m_\tau$. Uniqueness results then imply that $x_i(t)=x_i^\infty(t)$ for any $t\in[0,T_c-\tau)$ and $i=1,\ldots,N$ which is in contradiction with \eqref{mtauMa}. This proves \eqref{Tcdeltalim3}. 
 
Next,  from \eqref{Tcdeltalim3} we infer that for any $a>0$ we have 
 \beqs \min_{t\in [0,T_c-a]\atop{i=1,\ldots,N-1}}\xs_{i+1}(t)-\xs_i(t)\geq m_a>0\eeqs with $m_a$ independent of $\delta$, and \eqref{xideltalim} is then a consequence of  continuity result of the solution of 
 \eqref{dynamicalsysbar3} with respect to the parameter $\delta$.
With this, we have proved
Propositions \ref{tclimprop2} and \ref{tclimprop3}.


\subsection{Proof of Propositions \ref{thetaeprop} and  \ref{thetaeprop3}}

In order to simplify the notation, 
we set, for $i=1,\ldots,N$
\beq\label{utilde3}\tilde{u}_i(t,x):=u\left(\displaystyle\zeta_i\frac{x-\xs_i(t)}{\ep}\right)-H\left(\displaystyle\zeta_i\frac{x-\xs_i(t)}{\ep}\right),\eeq
where $H$ is the Heaviside function
and 
\beqs \psi_i(t,x):=\psi\left(\displaystyle\zeta_i\frac{x-\xs_i(t)}{\ep}\right).\eeqs

\noindent  Finally, let
\beq\label{vepansbarbis3}I_\ep:=\ep(\vs_\ep)_t+\displaystyle\frac{1}{\ep^{2s}}W'(\vs_\ep)-\I \vs_\ep-\sigma.\eeq

Roughly speaking, the quantity $I_\ep$ denotes the error term
in this equation (i.e., how far the modified dislocation~$\vs_\ep$
is from being an exact solution).
Thus, it is important to have careful
estimates on this error
term, as stated in the following result:

\begin{lem} For any $(t,x)\in (0,T_c^\delta)\times\R$ we have, for $i=1,\ldots,N$
\beq\label{ieplem3}\begin{split}I_\ep&=O(\tilde{u}_i)(\ep^{-2s}\displaystyle\sum_{j\neq i}\tilde{u}_j+\overline{\sigma}+\zeta_i\cs_i\eta)+\delta\\&
+\sum_{j=1}^N\left\{O(\ep^{2s+1}\dot{\cs}_j)+O(\ep^{2s}\cs_j^2)\right\}\\&
+\sum_{j\neq i}\left\{O(\cs_j\psi_j)+O(\cs_j\tilde{u}_j)+O(\ep^{-2s}\tilde{u}_j^2)\right\}+O(\ep^{2s}).
\end{split} 
\eeq

\end{lem}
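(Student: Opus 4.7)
The plan is to expand $I_\ep$ term by term using the defining equations \eqref{u} and \eqref{psi} together with Taylor expansion, exploit the periodicity of $W$ to absorb integer jumps, and then sort the remainders into the categories listed in \eqref{ieplem3}. Abbreviating $y_j := \zeta_j(x-\xs_j(t))/\ep$, a chain-rule computation on \eqref{vepansbar3} gives
\begin{equation*}
\ep(\vs_\ep)_t = \ep^{2s+1}\overline{\sigma}_t - \sum_j \zeta_j\cs_j\, u'(y_j) - \sum_j \zeta_j\ep^{2s+1}\dot{\cs}_j\psi_j + \ep^{2s}\sum_j \cs_j^2 \psi'(y_j).
\end{equation*}
Next, using the linearity and scaling of the fractional Laplacian, equation \eqref{u} yields $\I[u(y_j)]=\ep^{-2s}W'(u(y_j))$, equation \eqref{psi} yields $\I[\psi_j]=\ep^{-2s}[W''(u(y_j))\psi_j + u'(y_j) + \eta(W''(u(y_j))-W''(0))]$, and $\I[\ep^{2s}\overline{\sigma}]$ contributes an $O(\ep^{2s})$ term by \eqref{sigmaassump}.

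Two cancellations then occur. First, the $u'(y_j)$ contributions in $\ep(\vs_\ep)_t$ are annihilated by the $u'(y_j)$ source inside $\I[\psi_j]$ after passing to $-\I\vs_\ep$. Second, I would fix the index $i$ and decompose $\vs_\ep = u(y_i) + A + m_i$, where $m_i\in\Z$ collects the Heaviside jumps $\sum_{j\neq i}H(y_j)$ together with the additive constant in \eqref{vepansbar3} and
\begin{equation*}
A := \sum_{j\neq i}\tilde{u}_j + \ep^{2s}\overline{\sigma} - \sum_j \zeta_j\ep^{2s}\cs_j\psi_j.
\end{equation*}
Periodicity $W(v+1)=W(v)$ erases $m_i$, so Taylor expansion yields $W'(\vs_\ep) = W'(u(y_i)) + W''(u(y_i))A + O(A^2)$. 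The piece $\ep^{-2s}W'(u(y_i))$ then cancels against the $j=i$ contribution of $-\ep^{-2s}\sum_j W'(u(y_j))$, while for $j\neq i$ one uses $W'(u(y_j)) = W'(\tilde{u}_j) = W''(0)\tilde{u}_j + O(\tilde{u}_j^2)$, obtained from $W'(0)=0$ and periodicity.

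Once these cancellations are performed, each surviving term matches one on the RHS of \eqref{ieplem3}. The combination $\ep^{-2s}[W''(u(y_i))-W''(0)]\sum_{j\neq i}\tilde{u}_j$ is $O(\tilde{u}_i)\,\ep^{-2s}\sum_{j\neq i}\tilde{u}_j$, using $W''(u(y_i))-W''(0)=O(\tilde{u}_i)$ by periodicity of $W''$ and the mean value theorem. The $\ep^{2s}\overline{\sigma}$ piece of $A$ produces $W''(u(y_i))\overline{\sigma} = W''(0)\overline{\sigma}+O(\tilde{u}_i)\overline{\sigma} = \sigma+\delta+O(\tilde{u}_i)\overline{\sigma}$ by \eqref{barsigma3bars}, so the $-\sigma$ in $I_\ep$ is cancelled and the $\delta$ term of \eqref{ieplem3} appears with the $O(\tilde{u}_i)\overline{\sigma}$ remainder. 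The $\psi_j$ pieces with $j=i$ cancel exactly between $W''(u(y_i))A$ and the $\zeta_i\cs_i W''(u(y_i))\psi_i$ coming from $-\I\vs_\ep$, while for $j\neq i$ the residual is $\zeta_j\cs_j\psi_j[W''(u(y_j))-W''(u(y_i))] = O(\cs_j\psi_j)$. Finally, the $\eta(W''(u(y_j))-W''(0))$ contributions from $-\I\vs_\ep$ generate the $O(\tilde{u}_i)\zeta_i\cs_i\eta$ term for $j=i$ and the $O(\cs_j\tilde{u}_j)$ term for $j\neq i$, again using $|W''(u(y_j))-W''(0)|\le C|\tilde{u}_j|$.

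The main obstacle I anticipate is the bookkeeping required to control the quadratic Taylor remainder $\ep^{-2s}O(A^2)$. Expanding $A^2$ produces cross products of the form $\tilde{u}_j\overline{\sigma}$, $\tilde{u}_j\cs_k\psi_k$ and $\ep^{2s}\overline{\sigma}\cs_j\psi_j$, each of which must be absorbed into one of the categories $O(\ep^{2s})$, $O(\ep^{2s}\cs_j^2)$ or $O(\ep^{-2s}\tilde{u}_j^2)$; this absorption requires splitting into the regimes where $|\tilde{u}_j|$ is small or comparable to $\ep^{2s}$ and applying Young or AM--GM inequalities. The scale-matching identity $W''(0)\overline{\sigma}=\sigma+\delta$ from \eqref{barsigma3bars} is crucial, since without it the $-\sigma$ would not cancel and the $\delta$ of \eqref{ieplem3} would not appear; the same scheme applies uniformly to $N=2$ and $N=3$ because the structural form of $\vs_\ep$ in \eqref{vepansbar2} and \eqref{vepansbar3} is identical up to the number of layers.
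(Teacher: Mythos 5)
Your proposal is correct and follows essentially the same route as the paper: expand $\ep(\vs_\ep)_t$, use the scaling of $\I$ together with \eqref{u} and \eqref{psi}, Taylor-expand $W'$ around the $i$-th layer after invoking periodicity to discard the integer part, exploit the same two cancellations (the $u'$ terms and the $\ep^{-2s}W'+\cs_i W''\psi_i$ block), and use $W''(0)\overline{\sigma}-\sigma=\delta$ to produce the $\delta$ term. The quadratic remainder you flag as the main obstacle is handled in the paper exactly as you suggest, by Cauchy--Schwarz/AM--GM on the square of the sum, yielding the categories $O(\ep^{-2s}\tilde{u}_j^2)$, $O(\ep^{2s}\cs_j^2)$ and $O(\ep^{2s})$.
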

\begin{proof}
We have
\beq\label{vepbart3}\begin{split}\ep(\vs_\ep)_t&=\ep^{2s+1}\overline{\sigma}_t-\sum_{j=1}^N \zeta_j\cs_ju'\left(\displaystyle\zeta_j\frac{x-\xs_j}{\ep}\right)\\&
+\sum_{j=1}^N\left(-\zeta_j\ep^{2s+1}\dot{\cs}_j\psi\left(\displaystyle \zeta_j\frac{x-\xs_j}{\ep}\right)+\ep^{2s}\cs_j^2\psi'\left(\displaystyle\zeta_j\frac{x-\xs_j}{\ep}\right)\right).
\end{split}
\eeq

\noindent Next, using the periodicity of $W$ and a Taylor expansion of $W'$ at $\tilde{u}_i$, we compute:
\begin{equation}\label{wpvbar3}\begin{split}\ep^{-2s} W'(\vs_\ep)&=\ep^{-2s}W'\left(\ep^{2s}\overline{\sigma}+\tilde{u}_i+\sum_{j\neq i}\tilde{u}_j-\zeta_i\ep^{2s}\cs_i\psi_i-
\sum_{j\neq i}\zeta_j\ep^{2s}\cs_j\psi_j\right)\\&
=\ep^{-2s}W'(\tilde{u}_i)+\ep^{-2s}W''(\tilde{u}_i)(\ep^{2s}\overline{\sigma}+\sum_{j\neq i}\tilde{u}_j-\zeta_i\ep^{2s}\cs_i\psi_i-
\sum_{j\neq i}\zeta_j\ep^{2s}\cs_j\psi_j)\\&
+\sum_{j\neq i}O(\ep^{-2s}\tilde{u}_j^2)+\sum_{j=1}^NO(\ep^{2s}\cs_j^2\psi_j^2)+O(\ep^{2s}).
\end{split}\eeq

\noindent Finally, using \eqref{u} and \eqref{psi}, we evaluate
\begin{equation}\label{ivbar3}\begin{split}\I \vs_\ep&=\ep^{2s}\I \overline{\sigma}+\ep^{-2s}\I u\left(\displaystyle\zeta_i\frac{x-\xs_i}{\ep}\right)+\ep^{-2s}\sum_{j\neq i}\I 
u\left(\displaystyle\zeta_j\frac{x-\xs_j}{\ep}\right)
\\&
-\zeta_i\cs_i\I\psi\left(\displaystyle\frac{x-\xs_i}{\ep}\right)-\displaystyle \sum_{j\neq i}\zeta_j\cs_j\I\psi\left(\displaystyle\zeta_j\frac{x-\xs_j}{\ep}\right)\\&
=O(\ep^{2s})+\ep^{-2s}W'(\tilde{u}_i)+\ep^{-2s}\sum_{j\neq i}W'(\tilde{u}_j)\\&
-\displaystyle\zeta_i\cs_i\left[W''(\tilde{u}_i)\psi_i+u'\left(\zeta_i\displaystyle\frac{x-\xs_i}{\ep}\right)+\eta(W''(\tilde{u}_i)-W''(0))\right]\\&
-\sum_{j\neq i}\zeta_j\cs_j\left[W''(\tilde{u}_j)\psi_j+u'\left(\zeta_j\displaystyle\frac{x-\xs_j}{\ep}\right)+\eta(W''(\tilde{u}_j)-W''(0))\right].
\end{split}\eeq

\noindent Summing \eqref{vepbart3}, \eqref{wpvbar3} and \eqref{ivbar3}, and noticing that the terms involving $u'$, and the term $$\ep^{-2s}W'(\tilde{u}_i)-\zeta_i\cs_iW''(\tilde{u}_i)\psi_i$$
appearing in both \eqref{wpvbar3} and \eqref{ivbar3}, 
 cancel, we get

 \beq\label{iepfirstcomput3}\begin{split}I_\ep&=\ep(\vs_\ep)_t+\ep^{-2s}W'(\vs_\ep)-\I \vs_\ep-\sigma\\&
=\sum_{j=1}^N\left(-\zeta_j\ep^{2s+1}\dot{\cs}_j\psi_j+\ep^{2s}\cs_j^2\psi'_j\right)\\&
-\ep^{-2s}\sum_{j\neq i}\displaystyle W'(\tilde{u}_j)+W''(\tilde{u}_i)\left(\overline{\sigma}+\ep^{-2s}\sum_{j\neq i}\displaystyle\tilde{u}_j\right)+\sum_{j\neq i}\displaystyle\zeta_j\cs_j(W''(\tilde{u}_j)-W''(\tilde{u}_i))\psi_j\\&
+\zeta_i\cs_i\eta(W''(\tilde{u}_i)-W''(0))+\sum_{j\neq i}\zeta_j\displaystyle\cs_j\eta(W''(\tilde{u}_j)-W''(0))-\sigma\\&
+\sum_{j\neq i}O(\ep^{-2s}\tilde{u}_j^2)+\sum_{j=1}^NO(\ep^{2s}\cs_j^2\psi_j^2)+O(\ep^{2s}).
\end{split}\eeq

\noindent Now, since $W'(0)=0$, we use a Taylor expansion of $W'$ around 0, to see that

\begin{equation}\label{ivbar3mainterm}
\begin{split}
&-\ep^{-2s}\displaystyle \sum_{j\neq i}W'(\tilde{u}_j)+W''(\tilde{u}_i)\left(\overline{\sigma}+\ep^{-2s}\displaystyle\sum_{j\neq i}\tilde{u}_j\right)+\zeta_i\cs_i\eta(W''(\tilde{u}_i)-W''(0))\\&
=-\ep^{-2s}\sum_{j\neq i}\displaystyle W''(0)\tilde{u}_j+W''(\tilde{u}_i)\left(\overline{\sigma}+\ep^{-2s}\displaystyle\sum_{j\neq i}\tilde{u}_j\right)+\zeta_i\cs_i\eta(W''(\tilde{u}_i)-W''(0))
+\sum_{j\neq i}O(\ep^{-2s}\tilde{u}_j^2)\\&
=\ep^{-2s}(W''(\tilde{u}_i)-W''(0))\sum_{j\neq i}\tilde{u}_j+W''(\tilde{u}_i)\overline{\sigma}+\zeta_i\cs_i\eta(W''(\tilde{u}_i)-W''(0))+\sum_{j\neq i}O(\ep^{-2s}\tilde{u}_j^2)\\&
=(W''(\tilde{u}_i)-W''(0))(\ep^{-2s}\sum_{j\neq i}\tilde{u}_j+\overline{\sigma}+\zeta_i\cs_i\eta)+W''(0)\overline{\sigma}+\sum_{j\neq i}O(\ep^{-2s}\tilde{u}_j^2),
\end{split}\end{equation}
where we added and subtracted the term $W''(0)\overline{\sigma}$. Inserting \eqref{ivbar3mainterm} in \eqref{iepfirstcomput3}, we get
\beqs\begin{split}I_\ep&
=(W''(\tilde{u}_i)-W''(0))(\ep^{-2s}\sum_{j\neq i}\tilde{u}_j+\overline{\sigma}+\zeta_i\cs_i\eta)+W''(0)\overline{\sigma}-\sigma\\&
+\sum_{j=1}^N\left(-\zeta_j\ep^{2s+1}\dot{\cs}_j\psi_j+\ep^{2s}\cs_j^2\psi'_j\right)\\&
+\displaystyle\sum_{j\neq i}\zeta_j\cs_j(W''(\tilde{u}_j)-W''(\tilde{u}_i))\psi_j
+\displaystyle\sum_{j\neq i}\zeta_j\cs_j\eta(W''(\tilde{u}_j)-W''(0))\\&
+\sum_{j\neq i}O(\ep^{-2s}\tilde{u}_j^2)+\sum_{j=1}^NO(\ep^{2s}\cs_j^2\psi_j^2)+O(\ep^{2s}).
\end{split}
\eeqs
Now from  \eqref{barsigma3bars}
it follows that
$$W''(0)\overline{\sigma}-\sigma=\delta.$$
Moreover we have 
$$(W''(\tilde{u}_i)-W''(0))=O(\tilde{u}_i),$$
$$\ep^{2s}\cs_i^2\psi'_i,\,O(\ep^{2s}\cs_i^2\psi_i^2)=O(\ep^{2s}\cs_i^2),$$
$$\ep^{2s}\cs_j^2\psi'_j, \,O(\ep^{2s}\cs_j^2\psi_j^2)=O(\ep^{2s}\cs_j^2),$$
$$\cs_j(W''(\tilde{u}_j)-W''(\tilde{u}_i))\psi_j=O(\cs_j\psi_j),$$
$$\cs_j\eta(W''(\tilde{u}_j)-W''(0))=O(\cs_j \tilde{u}_j).$$
Equality  \eqref{ieplem3} then follows. 
\end{proof}

Let us now conclude the proof of Propositions \ref{thetaeprop} and  \ref{thetaeprop3}. Recalling  \eqref{vepansbarbis3}, 
we want to find  $\theta_\ep$ such that for $\xs_{i+1}(t)-\xs_i(t)\geq \theta_\ep$, $i=1,\ldots,N-1$, we have
\beq\label{iep=o(1)prop} I_\ep=o(1)+\delta\quad\text{as }\ep\to0.\eeq
Let us divide the proof in two cases. 
\bigskip

\noindent\emph{Case 1.}
Suppose that  $x$ is close to $\xs_i(t)$ more than $\ep^\alpha$, for some $i=1,\ldots,N$:
\beq\label{x-xs2leqeppow3}|x-\xs_i(t)|\leq \ep^\alpha\quad\text{with }0<\alpha<\displaystyle\frac{\kappa-2s}{\kappa},\eeq
where $\kappa$ is given in Lemma \ref{uinfinitylem}.
Let us assume that for $j\neq i$ 
\beq\label{x-xsjcase1promtheep}|\xs_j(t)-\xs_i(t)|\geq \theta_\ep>2\ep^\alpha\eeq with $\theta_\ep$ to be determined. Then for $j\neq i$
\beq\label{x-xsjcase1promtheepbis}|x-\xs_j(t)|\geq |\xs_i(t)-\xs_j(t)|-|x-\xs_i(t)|\geq|\xs_i(t)-\xs_j(t)|- \ep^\alpha\geq\frac{\theta_\ep}{2}.\eeq   
Hence, from \eqref{uinfinity} and  \eqref{utilde3}, we get 
\beqs\begin{split}&\left|\displaystyle\frac{\tilde{u}_j(t,x)}{\ep^{2s}}+\zeta_j\displaystyle\frac{1}{2s W''(0)}\displaystyle\frac{x-\xs_j(t)}{|x-\xs_j(t)|^{1+2s}}\right|
\\&= \displaystyle\frac{1}{\ep^{2s}}\left|u\left(\zeta_j\displaystyle\frac{x-\xs_j(t)}{\ep}\right)-H\left(\zeta_j\displaystyle\frac{x-\xs_j(t)}{\ep}\right)+\zeta_j
\displaystyle\frac{\ep^{2s}}{2s W''(0)}\displaystyle\frac{x-\xs_j(t)}{|x-\xs_j(t)|^{1+2s}}\right|\\&
\leq C\displaystyle\frac{\ep^\kappa}{\ep^{2s}}\displaystyle\frac{1}{|x-\xs_j(t)|^\kappa}\\&\leq C\ep^{\kappa-2s}\theta_\ep^{-\kappa}.\end{split}\eeqs
Next, a Taylor expansion of the function $\displaystyle\frac{x-\xs_j(t)}{|x-\xs_j(t)|^{1+2s}}$ around $\xs_i(t)$, gives
\beqs\begin{split}\left|\displaystyle\frac{x-\xs_j(t)}{|x-\xs_j(t)|^{1+2s}}-\displaystyle\frac{\xs_i(t)-\xs_j(t)}{|\xs_i(t)-\xs_j(t)|^{1+2s}}\right|&\leq \displaystyle\frac{2s}{|\xi-\xs_j(t)|^{1+2s}}|x-\xs_i(t)|\leq C\ep^  \alpha\theta_\ep^{-(1+2s)},\end{split}\eeqs
where $\xi$ is a suitable point lying on the segment joining $x$ to $\xs_i(t)$. 
The last two inequalities imply for $j\neq i$
\beq\label{u2behavioinfty3}\left|\displaystyle\frac{\tilde{u}_j(t,x)}{\ep^{2s}}+\zeta_j\displaystyle\frac{1}{2s W''(0)}\displaystyle\frac{\xs_i(t)-\xs_j(t)}{|\xs_i(t)-\xs_j(t)|^{1+2s}}\right|\leq C(\ep^{\kappa-2s} \theta_\ep^{-\kappa}+\ep^{ \alpha}\theta_\ep^{-(1+2s)}).\eeq

\noindent Therefore, from  \eqref{ieplem3}, we get that
\beq\label{iepsemifinal3}\begin{split} I_\ep&=O(\tilde{u}_i)\left(\displaystyle\sum_{j\neq i}-\zeta_j\displaystyle\frac{1}{2s W''(0)}\displaystyle\frac{\xs_i(t)-\xs_j(t)}{|\xs_i(t)-\xs_j(t)|^{1+2s}}+\overline{\sigma}+\zeta_i\cs_i\eta\right)+\delta \\&
+O(\ep^{\kappa-2s} \theta_\ep^{-k}+\ep^\alpha\theta_\ep^{-(1+2s)})\\&
+\sum_{j=1}^N\left\{O(\ep^{2s+1}\dot{\cs}_j)+O(\ep^{2s}\cs_j^2)\right\}\\&
+\sum_{j\neq i}\left\{O(\cs_j\psi_j)+O(\cs_j\tilde{u}_j)+O(\ep^{-2s}\tilde{u}_j^2)\right\}+O(\ep^{2s}).
\end{split}\eeq

\noindent Now, we compute the term between parenthesis in the first line above. From the definitions of $\cs_i$, $\eta$ and $\overline{\sigma}$ given respectively in 
\eqref{xbarpunto3}, \eqref{eta} and 
\eqref{barsigma3bars},
and the system of ODE's \eqref{dynamicalsysbar3},  we obtain
\beq\label{parenttermesti3}\begin{split}\displaystyle\sum_{j\neq i}-\zeta_j\displaystyle\frac{1}{2s W''(0)}\displaystyle\frac{\xs_i(t)-\xs_j(t)}{|\xs_i(t)-\xs_j(t)|^{1+2s}}+\overline{\sigma}+
\zeta_i\cs_i\eta&=\displaystyle\frac{\sigma(t,x)-\sigma(t,\xs_i(t))}{W''(0)}\\&
=O(|x-\xs_i(t)|)\\&=O(\ep^\alpha).\end{split}\eeq

\noindent Let us now estimate the remaining terms in \eqref{iepsemifinal3}. From 
\eqref{xbarpunto3},  \eqref{dynamicalsysbar3} and \eqref{x-xsjcase1promtheep}, we have for $j=1,\ldots,N$
\beq\label{c1c2behavior3}|\cs_j|=O(\theta_\ep^{-2s}),\eeq then 
\beq\label{errorestimeateIep13}O(\ep^{2s}\cs_j^2)=O(\ep^{2s}\theta_\ep^{-4s}).\eeq

\noindent Next, differentiating \eqref{dynamicalsysbar3} and using \eqref{xbarpunto3}
\beqs\begin{split}\dot{\cs}_i&=\gamma\zeta_i\left(-\sum_{j\neq i}\zeta_j\frac{\dot{\xs}_i-\dot{\xs}_j}{|\xs_i-\xs_j|^{2s+1}}-\sigma_t(t,\xs_i(t))-\sigma_x(t,\xs_i(t))\cs_i\right)\\&
=-\gamma^2\zeta_i\sum_{j\neq i}\zeta_j|\xs_i-\xs_j|^{-2s-1}\left(\sum_{k\neq i}\zeta_i\zeta_k\frac{\xs_i-\xs_k}{2s|\xs_i-\xs_k|^{1+2s}}\right.\\&
\left.-\zeta_i\sigma(t,\xs_i)-\zeta_i\delta-\sum_{l\neq j}\zeta_j\zeta_l\frac{\xs_j-\xs_l}{2s|\xs_j-\xs_l|^{1+2s}}+\zeta_j\sigma(t,\xs_j)+\zeta_j\delta\right)\\&
-\gamma\zeta_i(\sigma_t(t,\xs_i(t))+\sigma_x(t,\xs_i(t))\cs_i)\\&
=O(\theta_\ep^{-4s-1}).
\end{split}\eeqs
Then 
\beq\label{errorestimeateIepcidot3} O(\ep^{2s+1}\dot{\cs}_j)=O(\ep^{2s+1}\theta_\ep^{-4s-1})=O(\ep^{2s}\theta_\ep^{-4s}),\eeq
since $\ep\theta_\ep^{-1}=O(\ep^{1-\alpha})$ and $\alpha<1$.

\noindent Next, from \eqref{uinfinity} and \eqref{x-xsjcase1promtheepbis}, we have for $j\neq i$
\beq\label{u2behavioinfty23}|\tilde{u}_j|\leq C\ep^{2s}|x-\xs_j|^{-2s}\leq C\ep^{2s}\theta_\ep^{-2s}
\eeq then using \eqref{c1c2behavior3}, we get for $j\neq i$

\beq\label{errorestimeateIep33}O(\cs_j\tilde{u}_j)=O(\ep^{2s}\theta_\ep^{-4s})
.\eeq
Similarly
\beq\label{errorestimeateIep43}O(\ep^{-2s}\tilde{u}_j^2)=O(\ep^{2s}\theta_\ep^{-4s})
.\eeq
Next, from \eqref{psi'infty} we know that for $|x|\geq\ep^{\alpha-1}$ 
$$|\psi(x)|\leq  \left|\psi\left(\ep^{\alpha-1}\right)\right|+C\ep^{2s(1-\alpha)}.$$
 Therefore, from \eqref{x-xsjcase1promtheepbis} and \eqref{c1c2behavior3} we get 
 \beq\label{errorestimeateIep53}O(\cs_j\psi_j)=O\left(\theta_\ep^{-2s}\psi(\ep^{\alpha-1})\right)+O(\ep^{2s(1-\alpha)}\theta_\ep^{-2s}).
 \eeq
Let us choose $\theta_\ep$ such that 
\beq\label{thetaep3}\theta_\ep,\,\ep^\alpha\theta_\ep^{-(1+2s)},\, \ep^{2s}\theta_\ep^{-4s},\,\theta_\ep^{-2s}\psi(\ep^{\alpha-1}),\,
\ep^{2s(1-\alpha)}\theta_\ep^{-2s}=o(1)\quad\text{as }\ep\to0.\eeq 
Remark that   $\theta_\ep>\ep^\alpha$ implies  
$\ep^{\kappa-2s} \theta_\ep^{-\kappa}<\ep^{\kappa-2s-\kappa\alpha} =o(1)$, since $\alpha$ satisfies the condition in \eqref{x-xs2leqeppow3}.  Then from 
\eqref{iepsemifinal3}, \eqref{parenttermesti3}, \eqref{errorestimeateIep13}, \eqref{errorestimeateIepcidot3},   \eqref{errorestimeateIep33}, \eqref{errorestimeateIep43}, \eqref{errorestimeateIep53} and \eqref{thetaep3} we obtain
\beq\label{Iepfinalbefordelta3}
\begin{split}I_\ep&=O(\ep^\alpha)+O(\ep^{2s}\theta_\ep^{-4s})+O(\ep^{\kappa-2s} \theta_\ep^{-\kappa}+\ep^\alpha\theta_\ep^{-(1+2s)})
\\&+O\left(\theta_\ep^{-2s}\psi\left(\ep^{\alpha-1}\right)\right)+\ep^{2s(1-\alpha)}\theta_\ep^{-2s}+\delta\\&
=o(1)+\delta.\end{split}\eeq
\bigskip


\noindent\emph{Case 2.} Suppose that for any $i=1,\ldots,N$ we have 
$$|x-\xs_i(t)|\geq \ep^\alpha.$$ 
If  for $j\neq i$,  $|\xs_i-\xs_j|\geq\theta_\ep$, with $\theta_\ep>2\ep^\alpha$, we can assume that there exists $i$ such that for $j\neq i$
\beqs|x-\xs_j(t)|\geq \frac{\theta_\ep}{2}.\eeqs 
Then  estimates  \eqref{c1c2behavior3}, \eqref{errorestimeateIep13},  \eqref{errorestimeateIepcidot3},  \eqref{u2behavioinfty23}, \eqref{errorestimeateIep33}, \eqref{errorestimeateIep43} and \eqref{errorestimeateIep53} hold.
Moreover, using \eqref{uinfinity}, we have
$$|\tilde{u}_i|\leq C\ep^{2s}|x-\xs_i|^{-2s}\leq C\ep^{2s(1-\alpha)},$$
and as a consequence, using in addition \eqref{u2behavioinfty23}, for $j\neq i$
$$O(\tilde{u}_i)(\ep^{-2s}\tilde{u}_j)=O(\ep^{2s(1-\alpha)}\theta_\ep^{-2s})
.$$
Finally from   \eqref{c1c2behavior3}, we have 
 $$O(\tilde{u}_i)\cs_i=O(\ep^{2s(1-\alpha)}\theta_\ep^{-2s})
 .$$
Then, if we assume \eqref{thetaep3}, 
 from \eqref{ieplem3}, we obtain again \eqref{Iepfinalbefordelta3}.

We have proven \eqref{iep=o(1)prop}. 
Now, we can choose $\delta=\delta_\ep=o(1)$ as $\ep\to0$ such that 
$$I_\ep\ge 0$$ and the proposition is proven.
With this, the statements in
Propositions \ref{thetaeprop} and  \ref{thetaeprop3} are established.


\subsection{Proof of Lemmata \ref{vbarelessvtilteinitialtimelem} 
and  \ref{vbarelessvtilteinitialtimelem3}}

In what follows, we will use the notation 
$$\overline{T}_\ep^1:=T_\ep^1$$ when $N=2$.
Let $\alpha$ be defined as in \eqref{x-xs2leqeppow3} and $\theta_\ep$ satisfying  \eqref{thetaep3}.
The monotonicity of $u$ implies that for $j=1,\ldots,N$
\beq\label{umonovss-vs}u\left(\zeta_j\displaystyle\frac{x-\xss_j(0)}{\ep}\right)\ge u\left(\zeta_j\displaystyle\frac{x-\xs_j^\ep}{\ep}\right).\eeq
We divide the proof in three cases. In the first two cases we will assume that the point $x$ is close enough to either $\xs_i$ or $\xss_i$ for some $i=1,\ldots, N$. This assumption will give   a better estimate than \eqref{umonovss-vs}, that will imply the desired result. In the third case, when $x$ is sufficiently far from all the particles, we will recover the result choosing conveniently  $\dss\ge\delta_\ep$.
\bigskip

\noindent\emph{Case 1.} Suppose that $x$ is close to $\xs_i^\ep$ more than $\ep^\alpha$, for some $i=1,\ldots,N$:
\beqs
|x-\xs_i^\ep|\leq \ep^\alpha.\eeqs
Then, from estimate  \eqref{uinfinity} 
\beqs  u\left(\zeta_i\displaystyle\frac{x-\xs_i^\ep}{\ep}\right)\leq 1-C\ep^{2s}\ep^{-2s\alpha}.\eeqs
Next,  from 
the initial conditions in \eqref{dynamicalsysbarbar2} and in \eqref{dynamicalsysbarbar3}, we get
\beqs \zeta_i(x-\xss_i(0))=\zeta_i(x-\xs_i^\ep)+\zeta_i(\xs_i^\ep-\xss_i(0))\geq-\ep^\alpha+\theta_\ep\ge \frac{\theta_\ep}{2}.\eeqs
Therefore,  from estimate \eqref{uinfinity} 
\beqs u\left(\zeta_i\displaystyle\frac{x-\xss_i(0)}{\ep}\right)\geq 1-C\ep^{2s}\theta_\ep^{-2s}.\eeqs
Then, using in addition \eqref{umonovss-vs} we see that 
\beqs\begin{split}\sum_{j=1}^N\left[ u\left(\zeta_j\displaystyle\frac{x-\xss_j(0)}{\ep}\right)- u\left(\zeta_j\displaystyle\frac{x-\xs_j^\ep}{\ep}\right)\right]&
\ge u\left(\zeta_i\displaystyle\frac{x-\xss_i(0)}{\ep}\right)- u\left(\zeta_i\displaystyle\frac{x-\xs_i^\ep}{\ep}\right)\\&\ge C\ep^{2s}\ep^{-2s\alpha}-C\ep^{2s}\theta_\ep^{-2s}.
\end{split}\eeqs
Finally, remark that from \eqref{xsep2-xsep1=thetaep}, \eqref{xsep2-xsep1=thetaep3} and the initial conditions in \eqref{dynamicalsysbarbar2} and in \eqref{dynamicalsysbarbar3},  for $i=1,\ldots,N$
\beq\label{cicssi} \cs_i(\overline{T}_\ep^1),\,\css(0)=O(\theta_\ep^{-2s}).\eeq
We conclude that 
\beqs \hat{v}_\ep(0,x)-\vs(\overline{T}_\ep^1,x)\geq C\ep^{2s}\ep^{-2s\alpha}-O(\ep^{2s}\theta_\ep^{-2s})>0,\eeqs
for $\ep$ small enough. 

\bigskip

\noindent\emph{Case 2.} Suppose that $x$ is close to $\xss_i(0)$ more than $\ep^\alpha$, for some $i=1,\ldots,N$. 
\beqs
|x-\xss_i(0)|\leq \ep^\alpha.\eeqs
Then, from estimate  \eqref{uinfinity} 
\beqs  u\left(\zeta_i\displaystyle\frac{x-\xss_i}{\ep}\right)\geq C\ep^{2s}\ep^{-2s\alpha}.\eeqs
Next,  from 
the initial conditions in \eqref{dynamicalsysbarbar2} and in \eqref{dynamicalsysbarbar3}, we get
\beqs \zeta_i(x-\xs_i^\ep)=\zeta_i(x-\xss_i(0))+\zeta_i(\xss_i(0)-\xs_i^\ep)\leq \ep^\alpha-\theta_\ep\le -\frac{\theta_\ep}{2}.\eeqs
Therefore,  from estimate \eqref{uinfinity} 
\beqs  u\left(\zeta_i\displaystyle\frac{x-\xs_i^\ep}{\ep}\right)\leq C\ep^{2s}\theta_\ep^{-2s}.\eeqs

The conclusion then follows as in Case 1.

\bigskip

\noindent\emph{Case 3.} Suppose that for any $i=1,\ldots, N$
\beqs|x-\xs_i^\ep|,\,|x-\xss_i(0)|\ge \ep^{\alpha}.\eeqs
In this case, from  \eqref{cicssi} and \eqref{thetaep3} we have
$$\cs_i(\overline{T}_\ep^1)\psi\left(\zeta_i\displaystyle\frac{x-\xs_i^\ep}{\ep}\right),\,\css_i(0)\psi\left(\zeta_i\displaystyle\frac{x-\xss_i(0)}{\ep}\right)=o(1).$$
From the previous estimate and \eqref{umonovss-vs}, we get 
\beqs \hat{v}_\ep(0,x)-\vs(\overline{T}_\ep^1,x)\geq \ep^{2s}(o(1)+\dss-\delta_\ep).\eeqs
Therefore, we can choose  $\dss_\ep=o(1)+\delta_\ep$ such that 
\beqs \hat{v}_\ep(0,x)-\vs(\overline{T}_\ep^1,x)\geq0\eeqs and this concludes the proof of the lemmata.
These arguments establish
Lemmata \ref{vbarelessvtilteinitialtimelem} 
and  \ref{vbarelessvtilteinitialtimelem3}.

\subsection{Proof of Lemmata \ref{initialcondprop3}  and \ref{initialcondprop2}}
The proof of  Lemmata \ref{initialcondprop3}  and \ref{initialcondprop2} is similar to the proof of  Lemmata \ref{vbarelessvtilteinitialtimelem} and  \ref{vbarelessvtilteinitialtimelem3}. For this reason we skip it.

\end{document}